\colorlet{MyBlue}{DodgerBlue!75!Black}
\colorlet{MyGreen}{DarkGreen!85!Black}
\newcommand{\EMAIL}[1]{\email{\href{mailto:#1}{#1}}}
\numberwithin{equation}{section}  
\newcommand{\dd}{\:d}
\newcommand{\eps}{\varepsilon}
\newcommand{\from}{\colon}
\newcommand{\pd}{\partial}
\newcommand{\wilde}{\widetilde}
\newcommand{\mg}{\succ}
\newcommand{\mgeq}{\succcurlyeq}
\newcommand{\mleq}{\preccurlyeq}
\newcommand{\bX}{\mathbf{X}}
\newcommand{\bY}{\mathbf{Y}}
\newcommand{\R}{\mathbb{R}}
\newcommand{\N}{\mathbb{N}}
\DeclareMathOperator*{\aff}{aff}
\DeclareMathOperator*{\argmax}{arg\,max}
\DeclareMathOperator*{\argmin}{arg\,min}
\DeclareMathOperator*{\intersect}{\bigcap}
\DeclareMathOperator*{\union}{\bigcup}
\DeclareMathOperator*{\Sym}{Sym}
\DeclareMathOperator{\bd}{bd}
\DeclareMathOperator{\bigoh}{\mathcal{O}}
\DeclareMathOperator{\dom}{dom}
\DeclareMathOperator{\ex}{\mathbb{E}}
\DeclareMathOperator{\hess}{Hess}
\DeclareMathOperator{\im}{im}
\DeclareMathOperator{\intr}{int}
\DeclareMathOperator{\one}{\mathds{1}}
\DeclareMathOperator{\prob}{\mathbb{P}}
\DeclareMathOperator{\rank}{rank}
\DeclareMathOperator{\relint}{ri}
\DeclareMathOperator{\tr}{tr}
\DeclarePairedDelimiter{\braces}{\{}{\}}
\DeclarePairedDelimiter{\bracks}{[}{]}
\DeclarePairedDelimiter{\parens}{(}{)}
\DeclarePairedDelimiter{\abs}{\lvert}{\rvert}
\DeclarePairedDelimiter{\norm}{\lVert}{\rVert}
\DeclarePairedDelimiterXPP{\dnorm}[1]{}{\lVert}{\rVert}{_{\ast}}{#1}
\DeclarePairedDelimiterXPP{\fnorm}[1]{}{\lVert}{\rVert}{_{F}}{#1}
\DeclarePairedDelimiterX{\braket}[2]{\langle}{\rangle}{#1\mathopen{}\hspace{1pt}\delimsize\vert\hspace{1pt}\mathopen{}#2}
\DeclarePairedDelimiterX{\inner}[2]{\langle}{\rangle}{#1,#2}
\DeclarePairedDelimiterX{\setdef}[2]{\{}{\}}{#1:#2}
\DeclarePairedDelimiterXPP{\probof}[1]{\prob}{(}{)}{}{%

#1}
\DeclarePairedDelimiterXPP{\exof}[1]{\ex}{[}{]}{}{%

#1}
\DeclarePairedDelimiterXPP{\trof}[1]{\tr}{[}{]}{}{#1}
\newcommand{\txs}{\textstyle}
\newcommand{\textpar}[1]{\textup(#1\textup)}
\newcommand{\insum}{\sum\nolimits}
\newcommand{\as}{\textup(a.s.\textup)\xspace}
\theoremstyle{plain}
\newtheorem{theorem}{Theorem}
\newtheorem{corollary}[theorem]{Corollary}
\newtheorem*{corollary*}{Corollary}
\newtheorem{lemma}[theorem]{Lemma}
\newtheorem{proposition}[theorem]{Proposition}
\theoremstyle{definition}
\newtheorem{definition}[theorem]{Definition}
\newtheorem*{definition*}{Definition}
\newtheorem{example}{Example}
\theoremstyle{remark}
\newtheorem{remark}{Remark}
\newtheorem*{remark*}{Remark}
\newtheorem*{notation*}{Notational remark}
\numberwithin{theorem}{section}
\numberwithin{remark}{section}
\numberwithin{example}{section}
\newenvironment{proofof}[1]{\begin{proof}[#1]}{\end{proof}}
\newcommand{\vecspace}{\mathcal{V}}
\newcommand{\dspace}{\vecspace^{\ast}}
\newcommand{\dvec}{u}
\newcommand{\feas}{\mathcal{X}}
\newcommand{\intfeas}{\feas^{\circ}}
\newcommand{\dual}{\mathcal{Y}}
\newcommand{\base}{p}
\newcommand{\obj}{f}
\newcommand{\sol}{x^{\ast}}
\newcommand{\dsol}{y^{\ast}}
\newcommand{\subd}{\partial}
\newcommand{\cone}{P}
\newcommand{\tcone}{\mathrm{TC}}
\newcommand{\pcone}{\mathrm{PC}}
\newcommand{\breg}{D}
\newcommand{\mirror}{Q}
\newcommand{\depth}{\Omega}
\newcommand{\fench}{F}
\DeclareMathOperator{\Eucl}{\Pi}
\DeclareMathOperator{\logit}{\Lambda}
\DeclareMathOperator{\mlogit}{\boldsymbol{\Lambda}}
\newcommand{\actions}{\mathcal{X}}
\newcommand{\acts}{\actions}
\newcommand{\payv}{v}
\newcommand{\cost}{c}
\newcommand{\Cost}{C}
\newcommand{\graph}{\mathcal{G}}
\newcommand{\vertices}{\mathcal{V}}
\newcommand{\edges}{\mathcal{E}}
\newcommand{\edge}{e}
\newcommand{\edgealt}{\edge'}
\newcommand{\source}{o}
\newcommand{\sink}{d}
\newcommand{\rate}{\lambda}
\newcommand{\flow}{x}
\newcommand{\flows}{\feas}
\newcommand{\load}{w}
\newcommand{\nRoutes}{P}
\newcommand{\routes}{\mathcal{\nRoutes}}
\newcommand{\route}{p}
\newcommand{\routealt}{\route'}
\newcommand{\argdot}{\,\cdot\,}
\newcommand{\filter}{\mathcal{F}}
\newcommand{\gen}{\mathcal{L}}
\newcommand{\simplex}{\Delta}
\newcommand{\intsimplex}{\simplex^{\!\circ}}
\newcommand{\spectron}{\boldsymbol{\mathcal{D}}}
\newcommand{\intspectron}{\spectron^{\circ}}
\newcommand{\step}{\gamma}
\newcommand{\temp}{\eta}
\newcommand{\lyap}{V}
\newcommand{\noise}{Z}
\newcommand{\snoise}{\xi}
\newcommand{\mean}{\mu}
\newcommand{\noisevol}{\sigma}
\newcommand{\noisedev}{\noisevol_{\ast}}
\newcommand{\noisevar}{\noisedev^{2}}
\newcommand{\covmax}{\covmat_{\max}}
\newcommand{\covmat}{\Sigma}
\newcommand{\semiflow}{\Phi}
\newcommand{\dfeas}{\norm{\feas}}
\newcommand{\olimit}{\hat x}
\newcommand{\genvec}{u}
\newcommand{\genvecs}{\mathcal{U}}
\newcommand{\ybase}{y}
\newcommand{\strong}{\alpha}
\newcommand{\test}{\phi}
\newcommand{\Lip}{L}
\newcommand{\fradius}{R}
\newcommand{\ball}{\mathbb{B}}
\newcommand{\cpt}{C}
\newcommand{\nhd}{L}
\newcommand{\occ}{\mu}
\newcommand{\subspace}{\vecspace_{0}}
\newcommand{\dsubspace}{\subspace^{\ast}}
\newcommand{\subdual}{\dual_{0}}
\newcommand{\subcpt}{\cpt_{0}}
\newcommand{\submirror}{\mirror_{0}}
\newcommand{\dincl}{\pi_{0}}
\begin{document}


\title
[Gradient-like flows with noisy input]
{On the convergence of gradient-like\\flows with noisy gradient input}

\author{Panayotis Mertikopoulos$^{\ast}$}
\address
{$^{\ast}$\,%
Univ. Grenoble Alpes, CNRS, Inria, LIG, F-38000, Grenoble, France.}
\EMAIL{panayotis.mertikopoulos@imag.fr}

\author{Mathias Staudigl$^{\sharp}$}
\address{$^{\sharp}$\,%
Maastricht University, Department of Quantitative Economics, P.O. Box 616, NL\textendash 6200 MD Maastricht, The Netherlands.}
\EMAIL{m.staudigl@maastrichtuniversity.nl}

\thanks{
%
%
The authors are indebted to the associate editor and the two anonymous referees for their detailed suggestions and remarks.
PM was partially supported by the French National Research Agency (ANR) project ORACLESS (ANR\textendash GAGA\textendash13\textendash JS01\textendash 0004\textendash 01) and the Huawei Innovation Research Program ULTRON

\subjclass[2010]{Primary 90C25, 60H10; secondary 90C15.}
\keywords{%
Convex programming;
dynamical systems;
mirror descent;
noisy feedback;
stochastic differential equations}

\newcommand{\acli}[1]{\textit{\acl{#1}}}
\newcommand{\acdef}[1]{\textit{\acl{#1}} \textup{(\acs{#1})}\acused{#1}}
\newcommand{\acdefp}[1]{\emph{\aclp{#1}} \textup(\acsp{#1}\textup)\acused{#1}}

\newacro{OD}[O/D]{origin-destination}
\newacro{uhc}{upper hemicontinuous}
\newacro{HR}{Hessian Riemannian}
\newacro{OU}{Orn\-stein\textendash Uhl\-en\-beck}
\newacro{FP}{Fokker\textendash Planck}
\newacro{GD}{gradient descent}
\newacro{LHS}{left-hand side}
\newacro{RHS}{right-hand side}
\newacro{SMD}{stochastic mirror descent}
\newacro{PMD}{perturbed mirror descent}
\newacro{NMD}{noisy mirror descent}
\newacro{CMD}{contaminated mirror descent}
\newacro{SDE}{stochastic differential equation}
\newacro{MDS}{martingale difference sequence}
\newacro{EW}{exponential weights}
\newacro{APT}{asymptotic pseudotrajectory}
\newacroplural{APT}{asymptotic pseudotrajectories}
\newacro{WAPT}{weak asymptotic pseudotrajectory}
\newacroplural{WAPT}{weak asymptotic pseudotrajectories}
\newacro{KKT}{Karush\textendash Kuhn\textendash Tucker}
\newacro{MSE}{mean squared error}
\newacro{DSC}{diagonal strict concavity}
\newacro{NE}{Nash equilibrium}
\newacroplural{NE}[NE]{Nash equilibria}
\newacro{ESS}{evolutionarily stable state}
\newacro{MD}{mirror descent}
\newacro{MA}{mirror ascent}
\newacro{SA}{stochastic approximation}
\newacro{VS}{variational stability}
\newacro{iid}[i.i.d.]{independent and identically distributed}

\begin{abstract}
%
%
In view of solving convex optimization problems with noisy gradient input, we analyze the asymptotic behavior of gradient-like flows under stochastic disturbances.
Specifically, we focus on the widely studied class of \acl{MD} schemes for convex programs with compact feasible regions, and we examine the dynamics' convergence and concentration properties in the presence of noise.
In the vanishing noise limit, we show that the dynamics converge to the solution set of the underlying problem \as.
Otherwise, when the noise is persistent, we show that the dynamics are concentrated around interior solutions in the long run, and they converge to boundary solutions that are sufficiently ``sharp''.
Finally, we show that a suitably rectified variant of the method converges irrespective of the magnitude of the noise (or the structure of the underlying convex program), and we derive an explicit estimate for its rate of convergence.
\end{abstract}

\maketitle

\renewcommand{\sharp}{\gamma}
\acresetall
\allowdisplaybreaks

\section{Introduction}
\label{sec:introduction}

Consider an unconstrained convex program of the form
\begin{equation}
\label{eq:program-unconstrained}
\tag{P$_{0}$}
\begin{aligned}
\textrm{minimize}
	&\quad
	\obj(x),
\end{aligned}
\end{equation}
where $\obj\from\vecspace\to\R$ is a convex function defined on some finite-dimensional real space $\vecspace$.
To solve \eqref{eq:program-unconstrained}, a key role is played by the \emph{gradient flow} of $\obj$, i.e. the \acl{GD} dynamics
\begin{equation}
\label{eq:GD}
\tag{GD}
\dot x
	= -\nabla\obj(x).
\end{equation}%
As is well known, under mild regularity assumptions for $\obj$, the solution trajectories of \eqref{eq:GD} converge to the solution set of \eqref{eq:program-unconstrained} \textendash\ provided of course that said set is nonempty.
Thus, building on this ``quick-and-easy'' convergence result, \eqref{eq:GD} and its variants have become the starting point for a vast corpus of literature in convex optimization and control.

Notwithstanding, if the gradient input to \eqref{eq:GD} is contaminated by noise (e.g. due to faulty measurements and/or other exogenous factors), this convergence is destroyed, even in simple, one-dimensional problems.
To see this, take $\obj(x) = \theta (x - \mean)^{2}/2$ with parameters $\mean\in\R$ and $\theta>0$, and consider the perturbed dynamics
\begin{equation}
\label{eq:SGD-simple}
dX
	= - \theta (X-\mean) \dd t + \noisevol \dd W,
\end{equation} 
where $W(t)$ is a one-dimensional Wiener process (Brownian motion) with volatility $\noisevol>0$.
This system describes an \ac{OU} process with mean $\mean$ and reversion rate $\theta$, leading to the explicit solution formula
\begin{equation}
\label{eq:OU}
X(t)
	= X(0) e^{- \theta t}
	+ \mean (1 - e^{-\theta t})
	+ \noisevol \int_{0}^{t} e^{-\theta(t - s)} \dd W(s).
\end{equation}
Thanks to this expression, several conclusions can be drawn regarding \eqref{eq:SGD-simple}.
First, even though the drift of the dynamics \eqref{eq:SGD-simple} vanishes at $\mean$ (and only at $\mean$), $X(t)$ \emph{does not} converge to $\mean$ with positive probability;
instead, $X(t)$ converges in distribution to a Gaussian random variable $X_{\infty}$ with mean $\mean$ and variance $\noisevol^{2}/(2\theta)$ \cite[Chap.~5.6]{KS98}.
Thus, in the long run, $X(t)$ will fluctuate around $\mean$ with a spread that is roughly proportional to the noise volatility coefficient $\noisevol$.

More generally, by solving the associated Fokker\textendash Planck equation, it is well known that the perturbed \acl{GD} system
\begin{equation}
\label{eq:SGD}
dX
	= -\nabla\obj(X) \dd t + \noisevol\dd W
\end{equation}
admits a unique invariant measure $e^{-2\obj(x)/\noisevol^{2}}\dd x$, which gives rise to a (unique) invariant distribution $d \occ_{\infty}\propto e^{-2\obj(x)/\noisevol^{2}}\dd x$ (assuming that $\int e^{-2\obj(x)/\noisevol^{2}} \dd x < \infty$ for normalization purposes).
In other words, for large $t$, $X(t)$ is most likely to be found near $\argmin\obj$ and this likelihood is (exponentially) inversely proportional to $\noisevol$.
Moreover by ergodicity, the distribution of the time-averaged process $\bar X(t) = t^{-1} \int_{0}^{t} X(s) \dd s$ also converges to $\occ_{\infty}$;
thus, in general, even the ergodic average of $X(t)$ fails to converge to $\argmin\obj$ with positive probability.

Somewhat surprisingly, except for these basic results for unconstrained problems, the long-run behavior of constrained gradient-like flows with noisy input remains largely unexplored.
With this in mind, we consider here the widely studied class of \acdef{MD} dynamics that were pioneered by Nemirovski and Yudin \cite{NY83} for constrained convex programs (and which include \acl{GD} as a special case), and we examine their convergence properties in the presence of stochastic disturbances.

Concretely, our paper focuses on constrained convex programs of the form
\begin{equation}
\label{eq:program}
\tag{P}
\begin{aligned}
\textrm{minimize}
	&\quad
	\obj(x),
	\\
\textrm{subject to}
	&\quad
	x\in\feas,
\end{aligned}
\end{equation}
where $\feas$ is a compact convex subset of $\vecspace$
and
$\obj\from\feas\to\R$ is a $C^{1}$-smooth convex function on $\feas$.
In continuous time, the dynamics of \acl{MD} take the form
\begin{equation}
\label{eq:MD-intro}
\tag{MD}
\begin{aligned}
\dot y
	&= -\nabla\obj(x),
	\\
x
	&= \mirror(\temp y),
\end{aligned}
\end{equation}
where, referring to \cref{sec:prelims} for the details,
$\temp>0$ is a sensitivity parameter
while
the ``mirror map'' $\mirror(y) = \argmax_{x\in\feas}\{\braket{y}{x} - h(x)\}$ is a projection-like mapping defined via a strongly convex ``prox-function'' $h\from\feas\to\R$.
In this way, \eqref{eq:MD-intro} is the continuous-time limit of Nesterov's well-known \emph{dual averaging} scheme \cite{Nes09}
\begin{equation}
\label{eq:MD-discrete}
\begin{aligned}
y_{t+1}
	&= y_{t} - \step_{t} \nabla\obj(x_{t}),
	\\
x_{t+1}
	&=\mirror(\temp y_{t+1}),
\end{aligned}
\end{equation}
where $\step_{t} > 0$, $t=1,2,\dotsc$, is a variable step-size sequence.

The dynamics of \acl{MD} have recently attracted considerable interest in optimization
\cite{ABB04,BT03,ABRT04,KBB15,WWJ16}
and machine learning \cite{KM17,Kri16},
and we summarize some of the convergence results obtained for \eqref{eq:MD} in \cref{sec:prelims}.
As an example, if $h(x) = \frac{1}{2}\norm{x}_{2}^{2}$, we have $\mirror(y) = \argmin_{x\in\feas} \norm{y-x}$, so \eqref{eq:MD-intro} boils down to a (Euclidean) projected \acl{GD} scheme.
Extending this interpretation to general $h$, the authors of \cite{ABB04,BT03,ABRT04} showed that \eqref{eq:MD-intro} may be viewed as the gradient flow of $\obj$ with respect to a certain Riemannian metric on $\feas$;
thus, in addition to projected (Euclidean) \acl{GD}, \eqref{eq:MD-intro} also covers a very broad class of Riemannian gradient-like flows (cf.~\cref{sec:discussion}).

Moving beyond this deterministic framework, the study of \acl{MD} with noisy first-order feedback is a classic topic in optimization (see e.g. \cite{Nes04,NJLS09,DAJJ12,LTE15} and references therein).
In view of this, our paper focuses on the \acli{SMD} dynamics
\begin{equation}
\label{eq:SMD-intro}
\tag{SMD}
\begin{aligned}
dY
	&= -\nabla\obj(X) \dd t + d\noise,
	\\
X
	&= \mirror(\temp Y),
\end{aligned}
\end{equation}
where $\noise(t)$ is an Itô martingale process (such as Brownian motion) representing the sum of all random disturbances affecting the gradient input to \eqref{eq:MD}.
In this stochastic setting, the simple example \eqref{eq:SGD-simple} shows that the deterministic convergence properties of \eqref{eq:MD-intro} cannot be carried over to \eqref{eq:SMD-intro} in full generality.
Accordingly, our paper focuses on the following questions:

\begin{enumerate}
\item
If the volatility of $\noise(t)$ decays over time,
intuition suggests that the good convergence properties of \eqref{eq:MD-intro} should also apply to \eqref{eq:SMD-intro}.
In \cref{sec:smallnoise}, we make this intuition precise by noting that the solutions of \eqref{eq:SMD-intro} correspond to \acdefp{APT} \cite{BH96} of \eqref{eq:MD-intro} in the vanishing noise limit.%
\footnote{For background information on the theory of stochastic approximation and \acp{APT}, see \cite{BH96,Ben99}.}
Except for the very recent paper \cite{BH16}, we are not aware of a similar \ac{APT}-based analysis in optimization, and this interesting link between deterministic and stochastic \acl{MD} only becomes transparent in continuous time.

\item
If the noise is persistent, trajectory convergence to interior points is no longer possible.
Nonetheless, if $\obj$ is \emph{strongly} convex and \eqref{eq:program} admits an interior solution $\sol$ (a case of particular interest in machine learning and statistics \cite{SNW12}), the long-run behavior of \eqref{eq:SMD-intro} can be described by examining the dynamics' invariant distribution.
Our analysis in \cref{sec:invariant} provides an explicit estimate for this invariant measure and shows that \eqref{eq:SMD-intro} spends an arbitrarily large fraction of the time arbitrarily close to $\sol$ if the dynamics' sensitivity parameter $\temp$ is small enough.

\item
Departing from the interior case, we also consider sharp solutions that arise e.g. in generic linear programs.
In this case, if the sensitivity parameter $\temp$ of \eqref{eq:SMD-intro} is taken sufficiently small, $X(t)$ converges \as and this convergence occurs in finite time if the mirror map $\mirror$ is surjective (cf. \cref{sec:sharp}).

\item
Finally, if no assumptions can be made on the structure of \eqref{eq:program}, we show in \cref{sec:transforms} that a suitably rectified variant of \eqref{eq:SMD-intro} with a decreasing sensitivity parameter converges with probability $1$.
Specifically, if $\temp\equiv\temp(t)$ decays as $\Theta(t^{-1/2})$, the ergodic average $\bar X(t) = t^{-1}\int_{0}^{t}X(s) \dd s$ of $X(t)$ enjoys  an almost sure $\bigoh(t^{-1/2}\sqrt{\log\log t})$ value convergence rate.%
\footnote{That is, $\obj(\bar X(t)) - \min\obj = \bigoh(t^{-1/2}\sqrt{\log\log t})$ except for a set of measure zero.}
\end{enumerate}

At a technical level, this paper belongs to the growing literature on dynamical systems that arise in the solution of continuous optimization problems and variational inequalities \textendash\ see e.g. \cite{HM96,NY83,ISdCN99,SBC14,AA15,BC16,SBC14,KBB15,WWJ16} and references therein.
More precisely, the deterministic bedrock of our analysis coincides with the gradient-like dynamics studied in \cite{BT03,ABRT04,ABB04};
along with an important dichotomy that arises in the stochastic regime, we make this link precise in \cref{sec:discussion}.
Otherwise, from a stochastic viewpoint, the work that is closest to our analysis is the recent paper \cite{RB13} where the authors showed that the ergodic average of an interior-valued subclass of \eqref{eq:SMD-intro} converges within $\bigoh(\noisevol^{2})$ of the solution set of \eqref{eq:program} and further provided a variance reduction scheme based on the parallel sampling of multiple trajectories.
To the best of our knowledge, this is the only result known for \eqref{eq:SMD-intro};
our analysis in \cref{sec:transforms} shows that this optimality gap can be reduced to $0$ if \eqref{eq:SMD-intro} is run with a decreasing sensitivity parameter.

There is also a broad and vigorous literature on second-order gradient systems such as Nesterov's accelerated gradient method (cf.~\cite{SBC14} and references therein) and Polyak's ``heavy ball with friction'' dynamics \cite{AGR00,CEG09,BBJ15,AABR02,Pol87}.
Up to a dissipative friction term, such systems can be seen as quasi-gradient flows on $\feas\times\vecspace$ (the system's phase space) and recent works have considered the limit behavior of Itô perturbations of such flows \cite{GP14}.
Even though they might share some asymptotic properties, these second-order systems are fundamentally different from the first-order systems that we consider here (even in the noiseless, deterministic regime), so there is no overlap of results.

\section{Preliminaries}
\label{sec:prelims}

\subsection*{Notation}

Given an $n$-dimensional real space $\vecspace$ with norm $\norm{\cdot}$,
we will write
$\dspace$ for its dual,
$\braket{y}{x}$ for the pairing between $y\in\dspace$ and $x\in\vecspace$,
and $\dnorm{y} \equiv \sup\setdef{\braket{y}{x}}{\norm{x} \leq 1}$ for the dual norm of $y$ in $\dspace$.
Also, given an extended-real-valued function $g\from\vecspace\to\R\cup\{+\infty\}$,
its \emph{effective domain} is defined as $\dom g = \setdef{x\in\vecspace}{g(x)<\infty}$
and 
its \emph{subdifferential} at $x\in\dom g$ is given by $\subd g(x) = \setdef{y\in\dspace}{\text{$g(x') \geq g(x) + \braket{y}{x' - x}$ for all $x'\in\vecspace$}}$.

In the rest of the paper, $\feas$ will denote a compact convex subset of $\vecspace$
and $\obj\from\feas\to\R$ will be a $C^{1}$-smooth convex function on $\feas$;
we will also write $\intfeas \equiv \relint(\feas)$ for the relative interior of $\feas$ and $\norm{\feas} = \max\setdef{\norm{x' - x}}{x,x'\in\feas}$ for its diameter.
For $x\in\feas$, the \emph{tangent cone} $\tcone_{\feas}(x)$ is the closure of the set of all rays emanating from $x$ and intersecting $\feas$ in at least one other point.
The \emph{polar cone} $\pcone_{\feas}(x)$ to $\feas$ at $x$ is then defined as $\pcone_{\feas}(x) = \setdef{y\in\dspace}{\braket{y}{z} \leq 0\; \text{for all}\; z\in\tcone_{\feas}(x)}$.
For concision, when $\feas$ is understood from the context, we will drop it altogether and we will write $\tcone(x)$ and $\pcone(x)$ instead.

Finally, the asymptotic equality notation ``$f(t) \sim g(t)$ for large $t$'' means that $\lim_{t\to\infty} f(t)/g(t) = 1$;
the symbols ``$\lesssim$'' and ``$\gtrsim$'' are defined analogously.

\subsection{Mirror descent}
\label{sec:MD}

Dating back to Nemirovski and Yudin \cite{NY83}, the main idea of \acl{MD} is as follows:
Given a smooth convex objective $\obj\from\feas\to\R$, the optimizer takes an infinitesimal step along the negative gradient of $\obj$ in the dual space $\dspace$;
the output is then ``mirrored'' back to the problem's feasible region $\feas\subseteq \vecspace$ and the process continues.
More precisely, in continuous time, the dynamics of this process can be represented as
\begin{equation}
\label{eq:MD}
\tag{MD}
\begin{aligned}
\dot y
	&= \payv(x),
	\\
x
	&= \mirror(\temp y),
\end{aligned}
\end{equation}
where:
\begin{enumerate}
[\indent 1.]
\addtolength{\itemsep}{.5ex}
\item
$\payv(x) = -\nabla\obj(x)$ denotes the negative gradient of $\obj$ at $x$.
\item
$y\in\dspace$ is an auxiliary ``score'' variable that aggregates gradient steps.
\item
$\temp>0$ is a sensitivity parameter (see below).
\item
$\mirror\from\dspace\to\feas$ is the \emph{mirror} (or \emph{choice}) map that outputs a solution candidate $x\in\feas$ as a function of the score variable $y\in\dspace$ (also discussed below).
\end{enumerate}

%
%

A key element in the above description of \acl{MD} is the distinction between primal and dual variables \textendash\ that is, between candidate solutions $x\in\feas$ and score variables $y\in\dspace$.
To emphasize this duality, we will write $\dual \equiv \dspace$ for the dual space of $\vecspace$ and, following \cite{Nes09}, we will often refer to the dynamics \eqref{eq:MD} as \emph{dual averaging}.
Also, in terms of regularity, we will assume that
\begin{equation}
\label{eq:Lipschitz}
\tag{$\mathbf{H}_{1}$}
\text{$\payv(x)$ is Lipschitz continuous on $\feas$.}
\end{equation}
Strictly speaking, Hypothesis \eqref{eq:Lipschitz} is not needed for much of the analysis of \eqref{eq:MD} and can be replaced e.g. by global integrability of $\payv$;
however, it simplifies the presentation considerably, so we keep it throughout our paper.

Given that the dual variable $y$ aggregates (negative) gradient steps, a reasonable candidate for the mirror map $\mirror$ might appear to be the $\argmax$ correspondence $y \mapsto \argmax_{x\in\feas} \braket{y}{x}$ whose output is most closely aligned with $y$.
However, this assignment is set-valued and generically selects only extreme points of $\feas$, so it is ill-suited for general, nonlinear convex programs.
On that account, \eqref{eq:MD} is typically run with ``regularized'' mirror maps of the form
$y \mapsto \argmax_{x\in\feas} \{ \braket{y}{x} - h(x) \}$
where the penalty term $h(x)$ satisfies the following:

\begin{definition}
\label{def:mirror}
We say that $h\from\feas\to\R$ is a \emph{regularizer} (or \emph{penalty function}) on $\feas$ if it is continuous and \emph{strongly convex}, i.e. there exists some $K>0$ such that
\begin{equation}
h(\lambda x + (1-\lambda) x')
	\leq \lambda h(x) + (1-\lambda) h(x')
	- \tfrac{1}{2} K \lambda (1-\lambda) \norm{x' - x}^{2},
\end{equation}
for all $x,x'\in\feas$ and all $\lambda\in[0,1]$.
The \emph{mirror map} induced by $h$ is then defined as
\begin{equation}
\label{eq:mirror}
\mirror(y)
	= \argmax_{ x\in\feas} \{\braket{y}{x} - h(x) \}.
\end{equation}
\end{definition}

In view of the above, we have $\mirror(\temp y) = \argmax_{x\in\feas}\{ \braket{y}{x} - \temp^{-1} h(x) \}$, so $\temp$ essentially controls the weight of the penalty term $h(x)$ in \eqref{eq:mirror}.
Consequently, as $\temp\to 0$, the ``$\temp$-deflated'' mirror map $\mirror(\temp y)$ tends to select points that are closer to the ``prox-center'' $x_{c} \equiv \argmin h$ of $\feas$ (implying in turn that the primal variable $x$ becomes less susceptible to changes in $y$, hence the name ``sensitivity'').

\smallskip

For concreteness, we discuss below some examples of this construction:

\begin{example}[Euclidean projections]
\label{ex:Eucl}
Let $h(x) = \frac{1}{2} \norm{x}_{2}^{2}$.
Then, $h$ is $1$-strongly convex with respect to $\norm{\cdot}_{2}$ and the induced mirror map is the closest point projection
\begin{equation}
\label{eq:mirror-Eucl}
\Eucl(y)
	= \argmax_{x\in\feas} \braces[\big]{ \braket{y}{x} - \tfrac{1}{2} \norm{x}_{2}^{2} }
	= \argmin_{x\in\feas}\; \norm{y - x}_{2}^{2}.
\end{equation}
The dynamics derived from \eqref{eq:mirror-Eucl} may thus be viewed as a continuous-time version of (Euclidean) projected gradient descent \cite{ABRT04,Nes09,Mer17}.
For future reference, we also note that $h$ is differentiable throughout $\feas$ and $\Eucl$ is \emph{surjective} (i.e. $\im\Eucl = \feas$).
\end{example}

\begin{example}[Entropic regularization]
\label{ex:logit}
Let $\simplex = \setdef{x\in\R^{n}_{+}}{\sum_{i=1}^{n} x_{i} = 1}$ denote the unit simplex of $\R^{n}$ and consider the (negative) Gibbs entropy
\begin{equation}
\label{eq:entropy}
h(x)
	= \sum_{i=1}^{n} x_{i} \log x_{i}.
\end{equation}
The function $h(x)$ is $1$-strongly convex with respect to the $L^{1}$-norm on $\R^{n}$ and a straightforward calculation shows that the induced mirror map is
\begin{equation}
\label{eq:mirror-logit}
\logit(y)
	= \frac{1}{\sum_{i=1}^{n} \exp(y_{i})} (\exp(y_{1}),\dotsc,\exp(y_{n})).
\end{equation}
This model is known as \emph{logit choice} and the associated dynamics have been studied extensively in linear programming \cite{Kar90}, online learning \cite{SS11} and game theory \cite{HS98}.
In contrast to \cref{ex:Eucl}, $h$ is differentiable \emph{only} on the relative interior $\intsimplex$ of $\simplex$ and $\im\logit = \intsimplex$ (i.e. $\logit$ is ``essentially'' surjective).
\end{example}

\begin{example}[Matrix regularization]
\label{ex:matrix}
Motivated by applications to semidefinite programming, consider the unit spectrahedron $\spectron = \setdef{\bX\in\Sym(\R^{n})}{\bX\mgeq 0,\, \tr\bX \leq 1}$ of positive-semidefinite matrices with nuclear norm $\norm{\bX}_{1} = \tr\bX \leq 1$.
A widely used regularizer on $\spectron$ is provided by the \emph{von Neumann entropy} \cite{Ved02}
\begin{equation}
\label{eq:entropy-vN}
h(\bX)
	= \tr(\bX \log \bX) + (1 - \tr\bX) \log(1 - \tr\bX),
\end{equation}
which is $(1/2)$-strongly convex with respect to the nuclear norm \cite{KSST12}.
A straightforward calculation \cite{MBNS17} then shows that the induced mirror map is given by
\begin{equation}
\label{eq:logit-matrix}
\mlogit(\bY)
	= \frac{\exp(\bY)}{1 + \norm{\exp(\bY)}_{1}}
	\quad
	\text{for all $\bY\in\mathrm{Sym}(\R^{n})$}.
\end{equation}
As in \cref{ex:Eucl}, $h$ is differentiable \emph{only} on the relative interior $\intspectron$ of $\spectron$;
furthermore, since $\exp(\bY) \mg 0$ for all $\bY\in\Sym(\R^{n})$, we have $\im\mlogit = \intspectron$ (i.e. $\mlogit$ is ``essentially'' surjective).
\end{example}

The examples above highlight an important relationship between the domain of differentiability of $h$ and the image of the induced mirror map $\mirror$.
To describe it in detail, extend $h$ to all of $\vecspace$ by setting $h\equiv\infty$ outside $\feas$,
and let $\dom\subd h \equiv \setdef{x\in\feas}{\subd h(x)\neq\varnothing}$ be the domain of subdifferentiability of $h$.
We then have the following characterization of $\mirror$:

\begin{proposition}
\label{prop:mirror}
Let $h$ be a $K$-strongly convex regularizer,
let $\mirror\from\dual\to\acts$ be the mirror map induced by $h$,
and let $h^{\ast}(y) = \max \setdef{\braket{y}{x} - h(x)}{x\in\feas}$ denote the convex conjugate of $h$.
Then:
\begin{enumerate}
[\indent\upshape 1)]
\item
$x=\mirror(y)$ if and only if $y\in\subd h(x)$;
in particular, $\im\mirror = \dom\subd h$.
\item
$h^{\ast}$ is differentiable on $\dual$ and $\nabla h^{\ast}(y) = \mirror(y)$ for all $y\in\dual$.
\item
$\mirror$ is $(1/K)$-Lipschitz continuous.
\end{enumerate}
\end{proposition}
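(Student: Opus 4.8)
The plan is to derive all three claims from the compactness of $\feas$ together with elementary Fenchel duality. The first step is to record that $\mirror$ is well defined and single-valued: extending $h$ to all of $\vecspace$ by $+\infty$ outside $\feas$ makes it proper, lower semicontinuous and strongly convex, so (since $\feas$ is compact) the supremum defining $\mirror(y)$ is attained, and (since $x\mapsto\braket{y}{x} - h(x)$ is strictly concave) the maximizer is unique. The same compactness shows that $h^{\ast}$ is finite \textendash\ hence convex and continuous \textendash\ on all of $\dual$, and the Fenchel\textendash Moreau theorem gives $h^{\ast\ast} = h$; these facts will be used freely below.

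For part (1) I would simply unwind the definitions: $x = \mirror(y)$ says exactly that $h(x') - \braket{y}{x'} \geq h(x) - \braket{y}{x}$ for all $x'\in\vecspace$ (the inequality being automatic off $\feas$, where $h = +\infty$), which rearranges verbatim into $h(x') \geq h(x) + \braket{y}{x' - x}$ for all $x'$, i.e. $y\in\subd h(x)$; taking the union over $y\in\dual$ then gives $\im\mirror = \setdef{x\in\feas}{\subd h(x)\neq\varnothing} = \dom\subd h$. For part (2), set $x^{\ast} = \mirror(y)$; comparing the inequality $h^{\ast}(z) \geq \braket{z}{x^{\ast}} - h(x^{\ast})$, valid for every $z\in\dual$, with the equality $h^{\ast}(y) = \braket{y}{x^{\ast}} - h(x^{\ast})$ shows $\mirror(y)\in\subd h^{\ast}(y)$; conversely, $x\in\subd h^{\ast}(y)$ forces $y\in\subd h^{\ast\ast}(x) = \subd h(x)$, hence $x = \mirror(y)$ by part (1). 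Thus $\subd h^{\ast}(y) = \{\mirror(y)\}$ is a singleton at every point of $\dual$, and, $h^{\ast}$ being finite and convex everywhere, the standard criterion that a singleton subdifferential implies differentiability (with that singleton as the gradient) yields that $h^{\ast}$ is differentiable on $\dual$ with $\nabla h^{\ast} = \mirror$.

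Part (3) will take slightly more work, and I expect its only nonroutine ingredient to be the passage from the convex-combination form of strong convexity in \cref{def:mirror} to the sharpened subgradient inequality
\[
h(x') \geq h(x) + \braket{y}{x' - x} + \tfrac{1}{2} K \norm{x' - x}^{2}
\qquad
\text{for all } y\in\subd h(x),\ x'\in\feas.
\]
Since the norm on $\vecspace$ need not be induced by an inner product, one cannot shortcut this by subtracting $\tfrac{1}{2} K\norm{\cdot}^{2}$ and invoking smoothness; instead I would apply the defining inequality of \cref{def:mirror} to $x_{\lambda} = (1-\lambda) x + \lambda x'$, combine it with the subgradient bound $h(x_{\lambda}) \geq h(x) + \lambda\braket{y}{x' - x}$, divide by $\lambda$, and let $\lambda\to 0^{+}$. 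Granting this, I would plug $x_{i} = \mirror(y_{i})$ (so $y_{i}\in\subd h(x_{i})$ by part (1)) into the sharpened inequality twice, with the indices $1$ and $2$ interchanged, and add the two to obtain the strong-monotonicity estimate $\braket{y_{1} - y_{2}}{x_{1} - x_{2}} \geq K\norm{x_{1} - x_{2}}^{2}$; bounding the left-hand side by $\dnorm{y_{1} - y_{2}}\norm{x_{1} - x_{2}}$ and dividing through then gives $\norm{\mirror(y_{1}) - \mirror(y_{2})} \leq (1/K)\dnorm{y_{1} - y_{2}}$, which is the asserted Lipschitz bound. Everything apart from this one limiting argument is a routine assembly of the facts recorded in the first paragraph.
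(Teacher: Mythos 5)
Your proposal is correct: the paper itself only cites the standard references (Rockafellar, Thm.~23.5 and Rockafellar\textendash Wets, Thm.~12.60(b)), and your self-contained argument is precisely the canonical route those results encode \textendash\ the Fenchel\textendash Young/subdifferential characterization for (1), singleton subdifferential of the everywhere-finite conjugate for (2), and strong monotonicity of $\subd h$ for (3). In particular, your limiting argument deriving the sharpened subgradient inequality from the convex-combination form of strong convexity (needed because the norm need not be Euclidean) is exactly the right nonroutine step, so nothing is missing.
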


\begin{proof}
Standard;
see e.g. \cite[Theorem 23.5]{Roc70} and \cite[Theorem 12.60(b)]{RW98}.
\end{proof}

Since $\intfeas \subseteq \dom\subd h \subseteq \feas$ \cite[Chap.~23]{Roc70}, \cref{prop:mirror} shows that $\mirror$ is ``almost'' surjective;
specifically, the only points of $\feas$ that do not belong to $\im\mirror$ are boundary points of $\feas$ where $h$ becomes ``infinitely steep''.
Motivated by this, we say that $h$ is \emph{steep} at $x$ if $\subd h(x) = \varnothing$ and \emph{nonsteep} otherwise.
As a result, regularizers that are everywhere nonsteep induce mirror maps that are surjective (\cref{ex:Eucl}),
while regularizers that are steep throughout $\bd(\feas)$ give rise to interior-valued mirror maps (\cref{ex:matrix}).

\subsection{Bregman divergences and the Fenchel coupling}
\label{sec:Fenchel}

Another key tool in the convergence analysis of \acl{MD} (at least when $h$ is steep) is the \emph{Bregman divergence} $\breg(\base,x)$ between $x\in\feas$ and a target point $\base\in\feas$.
Following \cite{Kiw97b}, $\breg(\base,x)$ is defined as the difference between $h(\base)$ and the best linear approximation of $h(\base)$ starting from $x$, viz.
\begin{equation}
\label{eq:Bregman}
\breg(\base,x)
	= h(\base) - h(x) - h'(x;\base-x),
\end{equation}
where $h'(x;z) = \lim_{t\to0^{+}} t^{-1} [h(x+tz) - h(x)]$ denotes the one-sided derivative of $h$ at $x$ along $z\in\tcone(x)$.
Given that $h$ is strictly convex, we have $\breg(\base,x) \geq 0$ and $x(t)\to\base$ whenever $\breg(\base,x(t))\to0$;
hence, the convergence of $x(t)$ to $\base$ can be checked by means of the associated divergence $\breg(\base,x(t))$.

Notwithstanding, if $h$ is not steep, it is often impossible to obtain information about $\breg(\base,x(t))$ from \eqref{eq:MD} if $x(t)$ is not interior.%
\footnote{%
To understand this, consider the case where $\feas=[0,1]$ and $\mirror = \Eucl$, the Euclidean projector of \cref{ex:Eucl}.
If we take the objective $\obj(x) = x$ and start \eqref{eq:MD} at $y_{0} = a > 1$, then $x(t)$ would be stuck at $1$ for all $t\in[0,a-1]$.
The Bregman divergence would not be able to detect the evolution of $y(t)$ in this case (in tune with the fact that \eqref{eq:MD} cannot be recast as an autonomous dynamical system in terms of $x$ when $h$ is not steep);
for a detailed discussion, see \cite{MS16}.
}
To overcome this difficulty, we will instead employ the so-called \emph{Fenchel coupling}
\begin{equation}
\label{eq:Fenchel}
\fench(\base,y)
	= h(\base) + h^{\ast}(y) - \braket{y}{\base}
	\quad
	\text{for all $\base\in\feas$, $y\in\dual$},
\end{equation}
so named because it collects all terms of Fenchel's inequality.%
\footnote{For a related, trajectory-based variant of $\fench$, see also \cite[p.~444]{ABRT04}.}
This ``primal-dual'' divergence was first introduced in \cite{MS16,Mer17} and, as a consequence of Fenchel's inequality, it follows that $\fench(\base,y) \geq 0$ with equality if and only if $\base = \mirror(y)$.

The following proposition (taken from \cite{Mer17}) links the Fenchel coupling with the Bregman divergence and the underlying norm:

\begin{proposition}
\label{prop:Fenchel}
Let $h$ be a $K$-strongly convex regularizer on $\feas$.
Then, for all $\base\in\feas$ and all $y,y'\in\dual$, we have:
\begin{subequations}
\label{eq:Fench-properties}
\begin{alignat}{2}
\label{eq:Fench-Bregman}
&a)
	\;\;
	\fench(\base,y)
	&&\geq \breg(\base,\mirror(y))
	\;
	\text{with equality whenever $\mirror(y)\in\intfeas$}.
	\hspace{4em}
	\\[2pt]
\label{eq:Fench-norm}
&b)
	\;\;
	\fench(\base,y)
	&&\geq \tfrac{1}{2} K \, \norm{\mirror(y) - \base}^{2}.
	\\[2pt]
\label{eq:Fench-diff}
&c)
	\;\;
	\fench(\base,y')
	&&\leq \fench(\base,y) + \braket{y' - y}{\mirror(y) - \base} + \tfrac{1}{2K} \dnorm{y'-y}^{2}.
\end{alignat}
\end{subequations}
\end{proposition}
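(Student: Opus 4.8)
The plan is to dispatch the three parts in sequence, all of them flowing from one identity obtained by feeding the optimality characterization of $\mirror$ into the definition of $\fench$. The first step is to set $x = \mirror(y)$: by \cref{prop:mirror} we have $y \in \subd h(x)$, so Fenchel's inequality is tight, $h^{\ast}(y) = \braket{y}{x} - h(x)$, and hence
\[
\fench(\base, y)
	= h(\base) + h^{\ast}(y) - \braket{y}{\base}
	= h(\base) - h(x) - \braket{y}{\base - x}
	\qquad \text{for all } \base \in \feas .
\]
This is already a Bregman-type expression in which the subgradient $y$ plays the role that the gradient of $h$ at $x$ would play in $\breg(\base,x)$, which is exactly what (a) and (b) need; (c) is instead handled on the dual side.

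\noindent\textbf{Part (a).} I would compare the displayed identity with $\breg(\base, x) = h(\base) - h(x) - h'(x; \base - x)$ term by term: their difference is $\fench(\base, y) - \breg(\base, x) = h'(x; \base - x) - \braket{y}{\base - x}$, which is nonnegative because $y \in \subd h(x)$ forces $\braket{y}{z} \leq h'(x; z)$ for every $z \in \tcone(x)$, and $\base - x \in \tcone(x)$ since $\feas$ is convex. When $x = \mirror(y) \in \intfeas$, $h$ is differentiable at $x$ — this is the one place where interiority is genuinely used, in combination with the smoothness of $h$ on $\intfeas$ shared by all the running examples — so $h'(x; \cdot) = \braket{y}{\cdot}$ and the difference vanishes. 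I expect this equality clause to be the only delicate point; the rest is bookkeeping plus two standard lemmas.

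\noindent\textbf{Part (b).} Here it suffices to combine (a) with the estimate $\breg(\base, x) \geq \tfrac{1}{2} K \norm{\base - x}^{2}$, which is simply the directional-derivative form of $K$-strong convexity: setting $x' = \base$ in \cref{def:mirror}, dividing by $1 - \lambda$ and letting $\lambda \to 1^{-}$ gives $h'(x; \base - x) \leq h(\base) - h(x) - \tfrac{1}{2} K \norm{\base - x}^{2}$, which rearranges to that estimate. Chaining $\fench(\base, y) \geq \breg(\base, x) \geq \tfrac{1}{2} K \norm{x - \base}^{2}$ and recalling $x = \mirror(y)$ yields (b) — with no appeal to interiority, as it must be.

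\noindent\textbf{Part (c).} This part I would prove purely on the dual side. Straight from the definition, $\fench(\base, y') - \fench(\base, y) = h^{\ast}(y') - h^{\ast}(y) - \braket{y' - y}{\base}$. By \cref{prop:mirror}, $h^{\ast}$ is differentiable with $\nabla h^{\ast} = \mirror$, and $\mirror$ is $(1/K)$-Lipschitz; hence $h^{\ast}$ has a $(1/K)$-Lipschitz gradient and the descent lemma for smooth convex functions gives $h^{\ast}(y') \leq h^{\ast}(y) + \braket{y' - y}{\mirror(y)} + \tfrac{1}{2K} \dnorm{y' - y}^{2}$. Substituting this into the previous display and collecting the two linear terms gives exactly $\fench(\base, y') \leq \fench(\base, y) + \braket{y' - y}{\mirror(y) - \base} + \tfrac{1}{2K} \dnorm{y' - y}^{2}$, which is (c). The only non-routine ingredient is the descent lemma, applied to $h^{\ast}$ through the strong-convexity/smoothness duality that \cref{prop:mirror} already packages.
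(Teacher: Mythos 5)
Your proof is correct, and it uses exactly the ingredients the paper implicitly relies on (the paper itself only points to the reference [Mer17, Prop.~4.3] for this proposition): the identity $\fench(\base,y)=h(\base)-h(\mirror(y))-\braket{y}{\base-\mirror(y)}$ coming from $y\in\subd h(\mirror(y))$, the subgradient\textendash directional-derivative comparison for the inequality in (a), the directional form of $K$-strong convexity for (b), and the descent lemma applied to $h^{\ast}$, whose gradient is the $(1/K)$-Lipschitz map $\mirror$ by \cref{prop:mirror}, for (c). The norm bookkeeping (pairing bounded by $\dnorm{\cdot}\,\norm{\cdot}$) is handled correctly, and you rightly note that (b) requires no interiority.

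The only caveat concerns the equality clause of \eqref{eq:Fench-Bregman}. You deduce it from differentiability of $h$ at $\mirror(y)\in\intfeas$, but \cref{def:mirror} asks only for continuity and strong convexity, and a convex function need not be differentiable at a prescribed interior point. The equality claim genuinely needs this extra smoothness: take $\feas=[-1,1]$, $h(x)=\tfrac{1}{2}x^{2}+\abs{x}$ (continuous and $1$-strongly convex) and $y=1/2$; then $\mirror(y)=0\in\intfeas$ and $h^{\ast}(y)=0$, so $\fench(\base,y)=h(\base)-\base/2$, whereas $\breg(\base,0)=h(\base)-\abs{\base}$, and the two differ for every $\base\neq0$. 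So your argument is as complete as the statement permits \textendash\ you correctly isolate the point where smoothness of $h$ on $\intfeas$ (true in all the paper's running examples) enters \textendash\ but that differentiability should be flagged as an additional hypothesis rather than presented as a consequence of $\mirror(y)$ being interior.
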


\begin{proof}
See \cite[Proposition 4.3]{Mer17}.
\end{proof}

An immediate consequence of \eqref{eq:Fench-norm} is that $\mirror(y_{n})\to0$ for every sequence $(y_{n})_{n=0}^{\infty}$ in $\dual$ such that $\fench(\base,y_{n}) \to 0$.
As a result, the convergence of $x(t) = \mirror(y(t))$ to $\base\in\feas$ may be checked by showing that $\fench(\base,y(t))\to0$.
For technical reasons, it will be convenient to assume that the converse also holds, i.e.
\begin{equation*}
\label{eq:Fench-reg}
\tag{$\mathbf{H}_{2}$}
\fench(\base,y_{n}) \to 0
	\quad
	\text{whenever}
	\quad
	\mirror(y_{n}) \to \base.
\end{equation*}
When $h$ is steep, combining \cref{prop:mirror,prop:Fenchel} gives $\fench(\base,y) = \breg(\base,\mirror(y))$ for all $y\in\dual$,%
\footnote{To be clear, \cref{prop:Fenchel} guarantees that $\fench(\base,y) = \breg(\base,\mirror(y))$ whenever $\mirror(y)$ is interior.
The statement for steep $h$ is sharper because it states that $\fench(\base,y) = \breg(\base,\mirror(y))$ \emph{for all} $y$.
This is a consequence of the fact that $\im\mirror = \intfeas$ for steep $h$, hence the need to invoke \cref{prop:mirror}.}
so \eqref{eq:Fench-reg} boils down to the requirement
\begin{equation}
\breg(\base,x_{n}) \to 0
	\quad
	\text{whenever $x_{n}\to\base$}.
\end{equation}
This so-called ``reciprocity condition'' is well known in the theory of Bregman functions \cite{CT93,Kiw97b,ABB04} and, essentially, it means that the sublevel sets of $\breg(\base,\cdot)$ are neighborhoods of $\base$ in $\feas$.
Hypothesis \eqref{eq:Fench-reg} instead posits that the \emph{images} of the sublevel sets of $\fench(\base,\cdot)$ under $\mirror$ are neighborhoods of $\base$ in $\feas$, so \eqref{eq:Fench-reg} may be seen as a ``primal-dual'' variant of Bregman reciprocity.

It is easy to verify that \cref{ex:Eucl,ex:logit,ex:matrix} all satisfy \eqref{eq:Fench-reg}.
For an in-depth discussion of the geometric implications of Bregman reciprocity, the reader is referred to \cite{Kiw97b}.

\subsection{Deterministic analysis}
\label{sec:results-det}

Together with \cref{prop:mirror}, the Lipschitz continuity hypothesis \eqref{eq:Lipschitz} implies that the driving vector field $\payv(\mirror(\temp y))$ of \eqref{eq:MD} is itself Lipschitz continuous in $y$.
Hence, by standard results in the theory of differential equations, \eqref{eq:MD} is \emph{well-posed}, i.e. it admits a unique global solution for every initial condition $y_{0}\in\dual$ \cite[Chap.~V]{Rob95}.
With this in mind, we have:

\begin{theorem}
\label{thm:conv-det}
Assume \eqref{eq:Lipschitz} holds and let $x(t) = \mirror(\temp y(t))$ be a solution of \eqref{eq:MD} initialized at $y_{0}\in\dual$.
\begin{enumerate}
[\indent\upshape(1)]
\addtolength{\itemsep}{2pt}
\item
If $\obj_{\min}(t) = \min_{0\leq s\leq t} \obj(x(s))$ and $\bar\obj(t) = t^{-1} \int_{0}^{t} \obj(x(s))$ respectively denote the minimum and mean value of $\obj$ under \eqref{eq:MD}, we have
\begin{flalign}
\label{eq:rate-det}
&\obj_{\min}(t) - \min\obj
	\leq \bar\obj(t) - \min\obj
	= \bigoh(1/t).
	\\[2pt]
\intertext{In particular, if \eqref{eq:MD} is initialized at $y_{0} = 0$, we have}
\label{eq:rate-det0}
&\obj_{\min}(t)
	\leq \bar\obj(t)
	\leq \min\obj + \depth/t,
\end{flalign}
where $\depth = \max\setdef{h(x') - h(x)}{x,x'\in\feas}$.

\item
If \eqref{eq:Fench-reg} also holds, $x(t)$ converges to some $\sol\in\argmin\obj$ \textpar{possibly depending on $y_{0}$}.
\end{enumerate}
\end{theorem}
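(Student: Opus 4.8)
The energy function for the whole analysis is the Fenchel coupling $\fench(p,\argdot)$, which I will monitor along a solution $y(\cdot)$ of \eqref{eq:MD} (well-posed under \eqref{eq:Lipschitz}). For part (1), since $h^{\ast}$ is differentiable with $\nabla h^{\ast}=\mirror$ (\cref{prop:mirror}) and $y(\cdot)$ is $C^{1}$, the chain rule gives
\begin{equation*}
\tfrac{d}{dt}\fench(p,\temp y(t))
	= \temp\braket{\dot y(t)}{\mirror(\temp y(t))-p}
	= -\temp\braket{\nabla\obj(x(t))}{x(t)-p}
	\le -\temp\bracks[\big]{\obj(x(t))-\obj(p)},
\end{equation*}
the final step being convexity of $\obj$. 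Taking $p=\sol\in\argmin\obj$, integrating over $[0,t]$ and using $\fench\geq0$ yields $\temp\int_{0}^{t}(\obj(x(s))-\min\obj)\dd s\leq\fench(\sol,\temp y_{0})$, whence $\bar\obj(t)-\min\obj=\bigoh(1/t)$; the inequality $\obj_{\min}(t)\leq\bar\obj(t)$ is immediate from the definitions. When $y_{0}=0$ one computes $\fench(\sol,0)=h(\sol)+h^{\ast}(0)=h(\sol)-\min h\leq\depth$ (using $h^{\ast}(0)=-\min h$), which produces \eqref{eq:rate-det0}.

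For part (2), the same inequality with $p=\sol\in\argmin\obj$ shows $t\mapsto\fench(\sol,\temp y(t))$ is non-increasing, hence convergent, and letting $t\to\infty$ in the integral estimate gives $\int_{0}^{\infty}(\obj(x(s))-\min\obj)\dd s<\infty$. To upgrade this to $\obj(x(t))\to\min\obj$ I would appeal to a Barbalat-type lemma: $\dot y=-\nabla\obj(x)$ is bounded on the compact set $\feas$, $\mirror$ is Lipschitz (\cref{prop:mirror}), and $\obj$ is $C^{1}$ on $\feas$, so $t\mapsto\obj(x(t))$ is Lipschitz, in particular uniformly continuous; a non-negative, uniformly continuous, integrable function must vanish at infinity. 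Continuity of $\obj$ and compactness of $\feas$ then force $\dist(x(t),\argmin\obj)\to0$.

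It remains to pin down a single limit, and here \eqref{eq:Fench-reg} is essential. By compactness of $\feas$ there is a sequence $t_{k}\to\infty$ with $x(t_{k})=\mirror(\temp y(t_{k}))\to\sol$ for some $\sol\in\argmin\obj$; \eqref{eq:Fench-reg} then forces $\fench(\sol,\temp y(t_{k}))\to0$, and since $t\mapsto\fench(\sol,\temp y(t))$ is non-increasing (as $\sol$ minimizes $\obj$) the whole function tends to $0$. The bound $\fench(\sol,\temp y(t))\geq\tfrac{1}{2}K\norm{\mirror(\temp y(t))-\sol}^{2}$ from \cref{prop:Fenchel} now yields $\norm{x(t)-\sol}^{2}\leq\tfrac{2}{K}\fench(\sol,\temp y(t))\to0$, i.e. $x(t)\to\sol$.

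The calculus here is routine; the two points requiring care are the uniform-continuity step behind the Barbalat argument — which leans on compactness of $\feas$ together with the Lipschitz estimates of \cref{prop:mirror} — and the passage from a primal subsequential limit back to vanishing of the Fenchel coupling, which is precisely the role of \eqref{eq:Fench-reg}. Without the latter, $y(t)$ could drift to the boundary of $\feas$ along a direction the primal iterate $x(t)$ does not register (as in the $\feas=[0,1]$ example discussed earlier), and the argument would collapse.
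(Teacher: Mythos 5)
Your proof is correct. Part (1) coincides with the paper's argument: the same Lyapunov computation for the $\temp$-deflated Fenchel coupling (cf.\ \cref{lem:dFench-det}), integration, nonnegativity of $\fench$, and the identity $\fench(\sol,0)=h(\sol)-\min h\leq\depth$ for $y_{0}=0$. For part (2) you take a genuinely different route to the intermediate fact that $x(t)$ approaches $\argmin\obj$: the paper argues by contradiction on $\omega$-limits, using the Lipschitz estimate \eqref{eq:xdiff} on $\mirror$ to show that each visit to a neighborhood of a non-optimal $\omega$-limit lasts a uniform time $\delta$, so the Fenchel coupling drops by a fixed amount per visit and would tend to $-\infty$; you instead note that part (1) already yields $\int_{0}^{\infty}\bigl(\obj(x(s))-\min\obj\bigr)\dd s<\infty$, check that $t\mapsto\obj(x(t))$ is Lipschitz (bounded $\dot y$, Lipschitz $\mirror$ from \cref{prop:mirror}, $\obj\in C^{1}$ on compact $\feas$), and invoke Barbalat's lemma to get $\obj(x(t))\to\min\obj$. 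Both routes then finish identically: extract a subsequential limit $\sol\in\argmin\obj$, use \eqref{eq:Fench-reg} to force $\fench(\sol,\temp y(t_{k}))\to0$, use monotonicity of $t\mapsto\fench(\sol,\temp y(t))$ for $\sol\in\argmin\obj$, and conclude via the norm bound \eqref{eq:Fench-norm}. Your variant is shorter and recycles the integral bound from part (1), whereas the paper's dwell-time contradiction is the template it later reuses in the stochastic setting (e.g.\ \cref{prop:smallnoise}), which explains its phrasing; nothing is lost either way. A minor remark inherited from the paper's own proof: with the sensitivity $\temp$ kept explicit, the $y_{0}=0$ bound reads $\depth/(\temp t)$ rather than $\depth/t$, exactly as in your derivation.
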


\cref{thm:conv-det} is a strong convergence result guaranteeing global trajectory convergence to a solution of \eqref{eq:program} and an $\bigoh(1/t)$ value convergence rate for the averaged process $\bar x(t) = t^{-1} \int_{0}^{t} x(s) \dd s$ (by Jensen's inequality).
In \cref{app:det}, we provide a Lyapunov-based proof leveraging the fact that the ``$\temp$-deflated'' Fenchel coupling
\begin{equation}
\label{eq:Fench-temp}
\lyap(t)
	= \temp^{-1} \fench(\sol,\temp y(t)),
\end{equation}
is nondecreasing along the solution orbits of \eqref{eq:MD} for all $\sol\in\argmin\obj$.

The first part of the theorem is well known and essentially dates back to the original work of Nemirovski and Yudin \cite{NY83}.
As for the trajectory convergence properties of \eqref{eq:MD}, \cite{BT03,ABB04} provide a proof for a \acl{HR} gradient system which is formally equivalent to \eqref{eq:MD} when $h$ is steep (for a detailed discussion, see \cref{sec:discussion});
\cite{ABRT04} also deals with the singular Riemannian case (corresponding to nonsteep $h$), but requires that $\feas$ be polyhedral.
Finally, \cite{ABB04,Kri16} also provide an $\bigoh(1/t)$ value convergence rate for $x(t)$;
under \eqref{eq:Fench-reg}, Part (ii) of \cref{thm:conv-det} narrows this convergence down to a \emph{point} $\sol\in\argmin\obj$ (instead of the \emph{set} $\argmin\obj$).%
\footnote{If $\mirror$ is smooth (as opposed to Lipschitz), a simple differentiation shows that $\obj(x(t))$ is nonincreasing in $t$.
In this case, an $\bigoh(1/t)$ convergence for $\obj(x(t))$ follows readily from an $\bigoh(1/t)$ upper bound on $\bar\obj(t)$ by noting that $\obj(x(t)) = t^{-1} \int_{0}^{t} \obj(x(t)) \dd s \leq t^{-1} \int_{0}^{t} \obj(x(s)) \dd s = \bar\obj(t)$.}

Building on this basic deterministic result, our aim in the rest of this paper will be to explore how the strong convergence properties of \eqref{eq:MD} are affected if the gradient input of \eqref{eq:MD} is contaminated by noise.

\section{Mirror descent with noisy gradient input}
\label{sec:noisyMD}

To account for noise and measurement errors in \eqref{eq:MD}, our starting point will be the random disturbance model
\begin{equation}
\label{eq:noise-Langevin}
\dot y(t)
	= \payv(x(t)) + \epsilon(t),
\end{equation}
where $\epsilon(t)$ is a random function of time representing the noise in the gradient input $\payv(x(t))$ at each instance $t\geq0$.
To write the Langevin equation \eqref{eq:noise-Langevin} as a formal \acl{SDE}, let $(\Omega,\filter,\{\filter_{t}\}_{t\geq0},\prob)$ be a filtered probability space,\footnote{We tacitly assume here that $\filter_{t}$ satisfies the usual conditions, i.e. it is complete ($\filter_{0}$ contains all $\prob$-null sets) and right-continuous ($\filter_{t} = \intersect_{s>t} \filter_{s}$).}
and consider the \acl{SMD} dynamics
\begin{equation}
\label{eq:SMD}
\tag{SMD}
\begin{aligned}
dY
	&= \payv(X) \dd t + d\noise,
	\\
X
	&= \mirror(\temp Y),
\end{aligned}
\end{equation}
where $\noise(t) = (\noise_{1}(t),\dotsc,\noise_{n}(t))$ is a continuous $\filter_{t}$-adapted Itô martingale.
More precisely, we assume throughout that $\noise(t)$ is of the general form
\begin{equation}
\label{eq:noise}
d\noise_{i}(t)
	= \sum_{k=1}^{m} \noisevol_{ik}(X(t),t) \dd W_{k}(t),
	\quad
	i=1,\dotsc,n,
\end{equation}
where:
\begin{enumerate}
\item
$W = (W_{1},\dotsc,W_{m})$ is an adapted $m$-dimensional Wiener process.%
\footnote{It is possible to consider even more general continuous semimartingale error terms here
, but the presentation would become much more complicated.}
\item
The $n\times m$ \emph{volatility matrix} $\noisevol_{ik}\from\feas\times\R_{+} \to \R$ of $\noise(t)$ is assumed measurable, bounded, and Lipschitz continuous in the first argument.
More formally, we posit that
\begin{equation}
\label{eq:noise-reg}
\tag{$\mathbf{H}_{3}$}
\begin{aligned}
\sup\nolimits_{x,t} \abs{\noisevol_{ik}(x,t)}
	&< \infty,
	\\
\abs{\noisevol_{ik}(x',t) - \noisevol_{ik}(x,t)}
	&\leq \ell \, \norm{x' - x}.
\end{aligned}
\end{equation}
for some
$\ell>0$
and for all $x,x'\in\feas$, $t\geq0$.
\end{enumerate}

The most straightforward case for the noise is when $m=n$ and $\noise(t) = \noisevol W(t)$ for constant $\noisevol$.
This case corresponds to \acs{iid} increments that are uncorrelated across different components and that do not depend on $t$ or $X(t)$.
However, these independence assumptions are not always realistic:
in \cref{sec:traffic}, we discuss an important example with nontrivial correlations that arise in the study of traffic networks, and which necessitate the more general treatment above.

More concretely, the correlation structure of the noise process $\noise$ can be captured by the \emph{quadratic covariation process} $[\noise(t),\noise(t)]$,%
\footnote{Recall here that the covariation of two processes $X$ and $Y$ is defined as $[X(t),Y(t)] = \lim_{\abs{\Pi}\to0} \sum_{1\leq j \leq k} (X(t_{j}) - X(t_{j-1})) (Y(t_{j}) - Y(t_{j-1}))$, where the limit is taken over all partitions $\Pi = \{t_{0} = 0 < t_{1} < \dotsi < t_{k} = t\}$ of $[0,t]$ with mesh $\abs{\Pi} \equiv \max_{j} \abs{t_{j} - t_{j-1}} \to 0$ \cite{KS98}.}
given here by the \acs{SDE}
\begin{flalign}
\label{eq:variation}
d[\noise_{i}(t),\noise_{j}(t)]
	&= \sum_{k,\ell=1}^{m} \noisevol_{ik}(X(t),t) \noisevol_{j\ell}(X(t),t) \dd W_{k}(t) \cdot dW_{\ell}(t)
	\notag\\
	&= \sum_{k=1}^{m} \noisevol_{ik}(X(t),t) \noisevol_{jk}(X(t),t) \dd t
	= \covmat_{ij}(X(t),t) \dd t,
\end{flalign}
where $\covmat = \noisevol \noisevol^{\top}$ is the \emph{infinitesimal covariance matrix} of the process.
If $\covmat$ is not diagonal, the components of $\noise$ exhibit nontrivial correlations quantified by the nonzero off-diagonal elements of $\covmat$.
This also highlights the role of the underlying $m$-dimensional Wiener process $W(t)$ in \eqref{eq:SMD}:
if $m<n$, the induced disturbances are necessarily correlated;
if $m=n$ and $\noisevol$ is diagonal, the errors are independent across components;
and if $m>n$, the noise in each component may result from the aggregation of several, independent error sources.
Obviously, the precise statistics of the noise depend crucially on the application being considered,
so, for generality, we maintain an application-agnostic approach and we make no assumptions on the structure of $\covmat$.

For posterity, we also note here that the noise regularity hypothesis \eqref{eq:noise-reg} gives
\begin{equation}
\label{eq:noisevar}
\fnorm{\noisevol(x,t)}^{2}
	\leq \noisevar
	\quad
	\text{for some $\noisedev>0$ and all $x\in\feas$, $t\geq0$},
\end{equation}
where
\begin{equation}
\label{eq:Frobenius}
\fnorm{\noisevol}
	\equiv\sqrt{\trof{\noisevol\noisevol^{\top}}}
	= \sqrt{\trof{\covmat}}
\end{equation}
denotes the Frobenius norm of the $n\times m$ matrix $\noisevol$.
In what follows, it will be convenient to measure the magnitude of the noise affecting \eqref{eq:SMD} via $\noisedev$;%
\footnote{Note here that $\noisevar$ typically scales with the dimensionality of $\feas$ (for instance, if $\noise$ is a standard $n$-dimensional Wiener process).}
obviously, when $\noisedev=0$, we recover the noiseless, deterministic dynamics \eqref{eq:MD}.

Now, under the Lipschitz continuity hypothesis \eqref{eq:Lipschitz} and the noise regularity condition \eqref{eq:noise-reg},
standard results from the theory of \aclp{SDE} show that \eqref{eq:SMD} admits unique strong solutions that exist for all time (see e.g. Theorem 3.21 in \cite{ParRas14}).%
\footnote{The Lipschitz continuity of the drift and diffusion terms of \eqref{eq:SMD} is key in this regard.}
Specifically, for every (random) $\filter_{0}$-measurable initial condition $Y_{0}$ with $\exof{\dnorm{Y_{0}}^{2}} < \infty$, there exists an almost surely continuous stochastic process $Y(t)$ satisfying \eqref{eq:SMD} for all $t\geq0$ and such that $Y(0) = Y_{0}$.
Furthermore, up to redefinition on a $\prob$-null set, $Y(t)$ is the unique $\filter_{t}$-adapted process with these properties \cite[Theorem~3.4]{Kha12}.

For concreteness, we will focus only on non-random initial conditions of the form $Y(0) = y_{0}$ for a fixed $y_{0}\in\dual$.
In this case, the second moment condition $\exof{\dnorm{Y(0)}^{2}} < \infty$ is satisfied automatically, so we have:

\begin{proposition}
\label{prop:wp}
Assume \eqref{eq:Lipschitz} and \eqref{eq:noise-reg} hold.
Then, for all $y_{0}\in\dual$ and up to a $\prob$-null set, \eqref{eq:SMD} admits a unique strong solution $(Y(t))_{t\geq0}$ such that $Y(0) = y_{0}$.
\end{proposition}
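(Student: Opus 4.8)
The plan is to recast \eqref{eq:SMD} as a self-contained Itô \ac{SDE} for the score process $Y$ alone, and then to invoke the classical strong existence-and-uniqueness theory for \aclp{SDE} with globally Lipschitz coefficients. Substituting the feedback law $X = \mirror(\temp Y)$ into the drift and diffusion terms of \eqref{eq:SMD} (using the representation \eqref{eq:noise} of the martingale part), we are led to the stand-alone equation
\begin{equation*}
dY(t)
	= b(Y(t)) \dd t + \sum_{k=1}^{m} \Sigma_{k}(Y(t),t) \dd W_{k}(t),
\end{equation*}
with drift $b(y) = \payv(\mirror(\temp y))$ and diffusion coefficients $\Sigma_{ik}(y,t) = \noisevol_{ik}(\mirror(\temp y),t)$. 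It therefore suffices to verify that $b$ and $\Sigma$ satisfy a global Lipschitz condition in $y$ (uniformly in $t$) together with the attendant linear-growth bound; once this is established, any solution $Y(t)$ of the above \ac{SDE} yields a solution of \eqref{eq:SMD} by setting $X(t) = \mirror(\temp Y(t))$, and conversely.

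First I would treat the drift. By \cref{prop:mirror}, $\mirror$ is $(1/K)$-Lipschitz, so $\norm{\mirror(\temp y) - \mirror(\temp y')} \leq (\temp/K)\dnorm{y - y'}$; combining this with the Lipschitz continuity hypothesis \eqref{eq:Lipschitz} on $\payv$ gives
\begin{equation*}
\dnorm{b(y) - b(y')}
	= \dnorm{\payv(\mirror(\temp y)) - \payv(\mirror(\temp y'))}
	\leq \Lip\, \norm{\mirror(\temp y) - \mirror(\temp y')}
	\leq \frac{\Lip\,\temp}{K}\, \dnorm{y - y'},
\end{equation*}
where $\Lip$ denotes the Lipschitz modulus of $\payv$ on $\feas$; moreover $b$ is bounded (being the image of the compact set $\feas$ under the continuous map $\payv$), hence a fortiori of linear growth. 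For the diffusion, the same Lipschitz estimate for $\mirror$ together with the noise regularity condition \eqref{eq:noise-reg} yields, uniformly in $t \geq 0$,
\begin{equation*}
\abs{\Sigma_{ik}(y,t) - \Sigma_{ik}(y',t)}
	\leq \ell\, \norm{\mirror(\temp y) - \mirror(\temp y')}
	\leq \frac{\ell\,\temp}{K}\, \dnorm{y - y'},
\end{equation*}
while the uniform boundedness part of \eqref{eq:noise-reg} (equivalently, the bound \eqref{eq:noisevar} on $\fnorm{\noisevol}$) shows that $\Sigma$ is bounded and hence of linear growth.

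With these two estimates in hand, the classical strong existence-and-uniqueness theorem for \aclp{SDE} with Lipschitz data — e.g.\ \cite[Theorem~3.21]{ParRas14} or \cite[Theorem~3.4]{Kha12} — applies verbatim: for every $\filter_{0}$-measurable initial datum $Y_{0}$ with $\exof{\dnorm{Y_{0}}^{2}} < \infty$, the rewritten \ac{SDE} admits an almost surely continuous, $\filter_{t}$-adapted strong solution with $Y(0) = Y_{0}$, unique up to a $\prob$-null set; the associated primal process $X(t) = \mirror(\temp Y(t))$ is then also almost surely continuous and $\filter_{t}$-adapted, being the image of $Y(t)$ under the continuous map $\mirror(\temp\,\cdot\,)$. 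Specializing to the non-random initial condition $Y_{0} = y_{0}\in\dual$, the moment requirement reads $\exof{\dnorm{Y_{0}}^{2}} = \dnorm{y_{0}}^{2} < \infty$ and is trivially met, which establishes \cref{prop:wp}. I do not expect a genuine obstacle here: the entire content is the translation of the coupled system \eqref{eq:SMD} into a standard SDE and the bookkeeping of Lipschitz constants through the compositions $\payv\circ\mirror$ and $\noisevol\circ\mirror$. The only point requiring slight care is that the diffusion coefficient must be Lipschitz in $y$ \emph{uniformly} in the time variable $t$, which is exactly what the first-argument Lipschitz estimate in \eqref{eq:noise-reg} (with constant $\ell$ independent of $t$) provides.
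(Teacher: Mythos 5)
Your proposal is correct and follows essentially the same route as the paper: the paper's justification is precisely that, after substituting $X=\mirror(\temp Y)$, the drift $\payv\circ\mirror(\temp\,\cdot\,)$ and diffusion $\noisevol(\mirror(\temp\,\cdot\,),t)$ are globally Lipschitz in $y$ (uniformly in $t$) by \eqref{eq:Lipschitz}, \eqref{eq:noise-reg} and the $(1/K)$-Lipschitz continuity of $\mirror$ from \cref{prop:mirror}, so the standard strong existence/uniqueness theorems (the same references, \cite{ParRas14,Kha12}) apply, with the deterministic initial condition trivially satisfying the second-moment requirement. Your write-up merely makes the bookkeeping of the composed Lipschitz constants explicit, which matches the paper's intent.
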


In the rest of the paper, when we refer to a solution trajectory of \eqref{eq:SMD}, we will implicitly invoke the well-posedness result above without making an explicit reference to it.

\section{Convergence results}
\label{sec:results}

Despite the strong convergence properties of the deterministic dynamics \eqref{eq:MD}, the noise-contaminated dynamics \eqref{eq:SMD} may fail to converge, even in simple, one-dimensional problems.
For an elementary example, take $\obj(x) = x^{2}/2$ over $\feas = [-1,1]$, let $\noise(t) = W(t)$:
since the martingale part of \eqref{eq:SMD} does not vanish when $X(t) = 0$, it follows that $X(t)$ cannot converge to $\argmin\obj = \{0\}$ with positive probability \textendash\ and this, independently of the choice of mirror map $\mirror$.

In view of this nonconvergent example, our aim in the rest of this section will be to:
\begin{enumerate}
\item
Analyze the convergence properties of \eqref{eq:SMD} in the ``vanishing noise'' regime (\cref{sec:smallnoise}).
\item
Study the long-run concentration properties of $X(t)$ around interior minimizers (\cref{sec:invariant}).
\item
Identify classes of convex programs where $X(t)$ \emph{does} converge (\cref{sec:sharp}).
\item
Examine a convergent variant of \eqref{eq:SMD} with a decreasing sensitivity parameter (\cref{sec:transforms}).
\end{enumerate}



\begin{table}[tbp]
\centering
\renewcommand{\arraystretch}{1.3}
\small

\begin{tabular}{cll}
	&\sc{Hypothesis}
	&\sc{Statement}
	\\
\hline
\eqref{eq:Lipschitz}
	&Lipschitz gradients
	&$\payv(x)$ is Lipschitz continuous
	\\
\hline
\eqref{eq:Fench-reg}
	&Bregman reciprocity
	&$\fench(\base,y_{n})\to0$ whenever $\mirror(y_{n})\to\base$
	\\
\hline
\eqref{eq:noise-reg}
	&Noise regularity 
	&$\noisevol(x,t)$ is bounded and Lipschitz in $x$
	\\
\hline
\end{tabular}
\vspace{2ex}
\caption{Overview of the various hypotheses used in the paper.}
\label{tab:rates}
\vspace{-2ex}
\end{table}


\subsection{The vanishing noise regime}
\label{sec:smallnoise}

We begin with the case where the gradient input to \eqref{eq:SMD} becomes more accurate as measurements accrue over time \textendash\ for instance, as in applications to wireless communications where the accumulation of pilot signals allows users to better sense their channel over time \cite{MBNS17}.
In this ``vanishing noise'' limit, intuition suggests that $X(t)$ should asymptotically follow the dynamics \eqref{eq:MD}, and hence converge (in some sense) to $\argmin\obj$.

To make this intuition precise, we first show below that $\argmin\obj$ is \emph{recurrent} under $X(t)$, i.e. $X(t)$ visits any neighborhood of $\argmin\obj$ infinitely often:

\begin{proposition}
\label{prop:smallnoise}
Assume \labelcref{eq:Lipschitz,eq:noise-reg} hold, and let $X(t) = \mirror(\temp Y(t))$ be a solution of \eqref{eq:SMD}.
If $\lim_{t\to\infty} \sup_{x\in\feas} \fnorm{\noisevol(x,t)} = 0$, there exists a \textpar{random} sequence of times $t_{n}\uparrow\infty$ such that $X(t_{n})\to\argmin\obj$ \as.
\end{proposition}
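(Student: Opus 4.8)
Mirroring the proof of \cref{thm:conv-det}, the natural Lyapunov candidate is the $\temp$-deflated Fenchel coupling $\lyap(t)=\temp^{-1}\fench(\sol,\temp Y(t))$ to a fixed minimizer $\sol\in\argmin\obj$; the point is that the noise perturbs the deterministic dissipation of $\lyap$ only by a continuous martingale and a drift term that are \emph{both} sublinear in $t$ as soon as $\sup_{x}\fnorm{\noisevol(x,t)}\to0$. Since $y\mapsto\fench(\sol,y)$ is convex with $(1/K)$-Lipschitz gradient $\mirror(y)-\sol$ (\cref{prop:mirror,prop:Fenchel}), the integrated form of the estimate \eqref{eq:Fench-diff} — obtained by applying it along a partition of $[0,t]$ and passing to the limit, which amounts to the It\^o inequality for a $C^{1,1}$ convex function composed with the It\^o process $\temp Y$ — gives, with $X(s)=\mirror(\temp Y(s))$,
\[
\lyap(t)\;\le\;\lyap(0)+\int_{0}^{t}\braket{\payv(X(s))}{X(s)-\sol}\dd s+M(t)+\frac{c\,\temp}{2K}\int_{0}^{t}\fnorm{\noisevol(X(s),s)}^{2}\dd s ,
\]
where $c$ is a norm-equivalence constant and $M(t)=\int_{0}^{t}\braket{d\noise(s)}{X(s)-\sol}$ is a continuous martingale (its integrand is bounded, because $\im\mirror\subseteq\feas$ and $\noisevol$ is bounded by \eqref{eq:noise-reg}). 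By the gradient inequality for the convex function $\obj$,
\[
\braket{\payv(X(s))}{X(s)-\sol}=-\braket{\nabla\obj(X(s))}{X(s)-\sol}\le-\bigl(\obj(X(s))-\min\obj\bigr)\le0 ,
\]
so, up to the two error terms, $\lyap$ is nonincreasing — the stochastic counterpart of the monotonicity used in \cref{thm:conv-det}.

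\textbf{Recurrence by contradiction.} Suppose that, with positive probability, $X(t)$ eventually stays away from $\argmin\obj$. Splitting this event over rational radii and integer times, there are $\eps>0$, $T\in\N$ and an event $A$ with $\prob(A)>0$ on which $\dist(X(t),\argmin\obj)\ge\eps$ for all $t\ge T$. Because $\{x\in\feas:\dist(x,\argmin\obj)\ge\eps\}$ is compact and $\obj-\min\obj$ is continuous and strictly positive on it, this forces $\obj(X(t))-\min\obj\ge\delta>0$ for all $t\ge T$ on $A$, whence $\int_{T}^{t}(\obj(X(s))-\min\obj)\dd s\ge\delta(t-T)$. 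It remains to show that the error terms are $o(t)$. For the drift term, $\fnorm{\noisevol(\argdot,t)}$ is uniformly bounded by \eqref{eq:noise-reg} and $\sup_{x}\fnorm{\noisevol(x,t)}\to0$, so Ces\`aro averaging gives $\int_{0}^{t}\fnorm{\noisevol(X(s),s)}^{2}\dd s\le\int_{0}^{t}\sup_{x}\fnorm{\noisevol(x,s)}^{2}\dd s=o(t)$. For the martingale term, $[M,M](t)\le c'\norm{\feas}^{2}\int_{0}^{t}\fnorm{\noisevol(X(s),s)}^{2}\dd s=o(t)$, so, by the Dambis\textendash Dubins\textendash Schwarz time change and the strong law of large numbers for Brownian motion, $M(t)=o\bigl([M,M](t)\vee1\bigr)=o(t)$ \as (on $\{[M,M]_{\infty}<\infty\}$ the martingale converges and the bound is trivial). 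Plugging these in and using $\lyap\ge0$, on $A$ we get $0\le\lyap(T)-\delta(t-T)+(M(t)-M(T))+o(t)\to-\infty$ as $t\to\infty$ — a contradiction (note $\lyap(T)<\infty$ \as since $Y$ has continuous paths). Hence $\prob(A)=0$, so $\dist(X(t),\argmin\obj)\to0$ along a (random) sequence $t_{n}\uparrow\infty$ almost surely.

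\textbf{Main obstacle.} The delicate step is the passage from the pointwise estimate \eqref{eq:Fench-diff} to the stochastic integral inequality above: since $h^{\ast}$ is only $C^{1,1}$ and not $C^{2}$, one cannot quote It\^o's formula verbatim and must instead justify the limit of the Riemann\textendash It\^o sums while controlling the quadratic-variation correction in the (possibly non-Euclidean) dual norm $\dnorm{\argdot}$. The second, probabilistic, subtlety is that $M(t)=o(t)$ has to be obtained \emph{without} assuming $\int_{0}^{\infty}\sup_{x}\fnorm{\noisevol(x,s)}^{2}\dd s<\infty$ — the hypothesis only gives decay of $\fnorm{\noisevol(\argdot,t)}$, not square-integrability — which is why a time-change/law-of-large-numbers argument is used rather than plain martingale convergence. (Alternatively, in line with the discussion preceding the statement, one could first prove that the solution orbits of \eqref{eq:SMD} are asymptotic pseudotrajectories of \eqref{eq:MD} and then transport the recurrence of $\argmin\obj$ under \eqref{eq:MD} — given by \cref{thm:conv-det} — to \eqref{eq:SMD}; the Lyapunov route above is, however, more self-contained.)
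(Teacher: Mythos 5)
Your proposal is correct and follows essentially the same route as the paper: a contradiction argument with the $\temp$-deflated Fenchel coupling as Lyapunov function, linear decay of the drift term on the event where $X(t)$ stays away from $\argmin\obj$, and sublinearity of the Itô correction and martingale terms under the vanishing-noise hypothesis (the paper handles the $C^{1,1}$ issue you flag via a mollification-based Itô inequality, \cref{prop:Ito}, packaged in \cref{lem:dFench-stoch}, and the martingale bound via \cref{lem:Wbound}). Your Cesàro bound for the correction term and the Dambis\textendash Dubins\textendash Schwarz/law-of-large-numbers treatment of $M(t)$ are only cosmetic variants of the paper's l'Hôpital and \cref{lem:Wbound} steps.
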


As in the noiseless case (and much of the analysis to follow), the proof of \cref{prop:smallnoise} hinges on the ``$\temp$-deflated'' Fenchel coupling
\begin{equation}
\lyap(t)
	= \temp^{-1} \fench(\sol,\temp Y(t)),
\end{equation}
which satisfies the (stochastic) Lyapunov-like property
\begin{alignat}{2}
\label{eq:dFench-stoch-const}
\lyap(t) - \lyap(0)
	&\leq \int_{0}^{t} \braket{\payv(X(s))}{X(s) - \sol} \dd s
	&\qquad&
	\text{(drift)}
	\notag\\
	&+ \frac{\temp}{2K} \int_{0}^{t} \trof{\covmat(X(s),s)} \dd s
	&\qquad&
	\text{(Itô correction)}
	\notag\\
	&+ \sum_{i=1}^{n} \int_{t_{0}}^{t} (X_{i}(s) - \sol_{i}) \dd Z_{i}(s)
	&\qquad&
	\text{(martingale noise)}
\end{alignat}
Arguing by contradiction, if $X(t)$ remained a bounded distance away from $\argmin\obj$, the drift term in \eqref{eq:dFench-stoch-const} would decrease linearly in $t$ for all $\sol\in\argmin\obj$ (by convexity).
Since the Itô correction and martingale noise terms grow sublinearly in $t$ (by the vanishing noise assumption and the law of large numbers respectively), this would give $\lyap(t)\to-\infty$, contradicting the fact that $\lyap(t)\geq0$.

Of course, \cref{prop:smallnoise} is considerably weaker than its deterministic counterpart (\cref{thm:conv-det}), because it does not even imply that $X(t)\to\argmin\obj$ with positive probability.
Nonetheless, by slightly strengthening the vanishing noise requirement $\sup_{x}\fnorm{\noisevol(x,t)}\to0$, we obtain that $X(t)\to\argmin\obj$
\emph{with probability $1$:}

\begin{theorem}
\label{thm:smallnoise}
Assume \labelcref{eq:Lipschitz,eq:Fench-reg,eq:noise-reg} hold and let $X(t) = \mirror(\temp Y(t))$ be a solution of \eqref{eq:SMD}.
If $\sup_{x\in\feas} \fnorm{\noisevol(x,t)} = o(1/\sqrt{\log t})$,
we have $X(t)\to\argmin\obj$ \as.
\end{theorem}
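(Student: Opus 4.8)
The plan is to bootstrap the recurrence statement of \cref{prop:smallnoise} into almost sure convergence by controlling the oscillations of the $\temp$-deflated Fenchel coupling $\lyap(t) = \temp^{-1}\fench(\sol,\temp Y(t))$. Since $\sup_{x}\fnorm{\noisevol(x,t)} = o(1/\sqrt{\log t})$ certainly implies $\sup_{x}\fnorm{\noisevol(x,t)}\to0$, \cref{prop:smallnoise} provides a random sequence $t_{n}\uparrow\infty$ with $X(t_{n})\to\argmin\obj$ \as; passing to a further subsequence by compactness of $\feas$, I may assume $X(t_{n})\to\sol$ \as for a single (random) $\sol\in\argmin\obj$, and \eqref{eq:Fench-reg} then gives $\lyap(t_{n})\to0$ \as (with $\lyap$ henceforth taken relative to this $\sol$). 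By \eqref{eq:Fench-norm} we have $\lyap(t)\geq\tfrac{K}{2\temp}\norm{X(t)-\sol}^{2}$, so it suffices to prove $\lyap(t)\to0$ \as; and since $\liminf_{t}\lyap(t)=0$ along $(t_{n})$, this in turn reduces to showing that the subsequent upward excursion $\sup_{t\geq t_{n}}\bracks{\lyap(t)-\lyap(t_{n})}$ tends to $0$ \as as $n\to\infty$.

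To bound these excursions I would discretize time into unit windows $[k,k+1]$ and apply the Lyapunov inequality \eqref{eq:dFench-stoch-const} over each. The drift term is nonpositive by convexity of $\obj$, so over any interval $\lyap$ can only be increased by the Itô correction and by the martingale noise $M(t) = \sum_{i}\int_{0}^{t}(X_{i}(s)-\sol_{i})\dd Z_{i}(s)$. Over one window the Itô correction is at most $\tfrac{\temp}{2K}\sup_{x}\fnorm{\noisevol(x,k)}^{2} = o(1/\log k)$; and since the bracket of $M$ increases over $[k,k+1]$ by at most $\norm{\feas}^{2}\sup_{x}\fnorm{\noisevol(x,k)}^{2}$, the exponential martingale inequality gives, for any fixed $\delta>0$, a window-oscillation bound of the form $\prob\bracks{\sup_{t\in[k,k+1]}\abs{M(t)-M(k)} > \delta} \leq 2\exp\parens[\big]{-c\delta^{2}/\sup_{x}\fnorm{\noisevol(x,k)}^{2}}$. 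This is exactly where the hypothesized rate enters: $\sup_{x}\fnorm{\noisevol(x,k)}^{2}=o(1/\log k)$ forces $1/\sup_{x}\fnorm{\noisevol(x,k)}^{2}$ to grow faster than $\log k$, so the right-hand side decays faster than any power of $k^{-1}$ and is summable; Borel--Cantelli then yields that, \as, the per-window oscillation of $M$ (hence of $\lyap$, up to the vanishing Itô term) eventually stays below $\delta$.

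The last and most delicate step is to turn these per-window bounds into a bound on the cumulative excursion $\sup_{t\geq t_{n}}\bracks{\lyap(t)-\lyap(t_{n})}$ over the possibly many windows separating consecutive recurrence times. The difficulty is that the Itô correction, although vanishing per window, need not be summable in $t$ (the rate $o(1/\sqrt{\log t})$ does not force $\int^{\infty}\sup_{x}\fnorm{\noisevol(x,t)}^{2}\dd t<\infty$), so it must be absorbed by the strictly negative drift that is available whenever $X(t)$ strays from $\argmin\obj$: if $\dist(X(t),\argmin\obj)\geq\epsilon$ then $\obj(X(t))-\min\obj\geq\nu(\epsilon)>0$ (the sublevel sets of $\obj$ shrink to $\argmin\obj$), so the drift of $\lyap$ is $\leq-\nu(\epsilon)$, and once $k$ is large enough that the per-window Itô correction drops below $\nu(\epsilon)/2$, every window in which $X$ goes $\epsilon$-far from $\argmin\obj$ yields a net decrease of $\lyap$ of order $\nu(\epsilon)$ that dominates the $o(1)$ increases allowed in the ``close'' windows. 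Running an excursion/bookkeeping argument anchored at the times $t_{n}$ (where $\lyap$ is already small), this gives $\limsup_{t}\lyap(t)\leq C\delta$ \as for every $\delta>0$, hence $\lyap(t)\to0$ \as; plugging back into \eqref{eq:Fench-norm} yields $\norm{X(t)-\sol}\to0$, so $X(t)\to\argmin\obj$ \as (in fact $X(t)\to\sol$). The main obstacle is precisely this synthesis: keeping the non-summable Itô correction under control via the intermittent negative drift, while simultaneously handling a non-singleton solution set — where $\lyap$ relative to the fixed $\sol$ need not be small even when $X$ is close to $\argmin\obj$ — which is why the argument must be tethered to the recurrent visits supplied by \cref{prop:smallnoise} rather than run on its own.
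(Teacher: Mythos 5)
Your route is genuinely different from the paper's, and its quantitative core is sound. The paper never runs a hand-made excursion analysis: it first shows, via Benaïm's integral criterion, that the hypothesis $\sup_{x}\fnorm{\noisevol(x,t)}=o(1/\sqrt{\log t})$ makes $Y(t)$ an almost sure \ac{APT} of \eqref{eq:MD} (the computation $e^{-c/\covmax(t)}=t^{-c/\phi(t)}$, integrable for $\phi(t)\to0$, is exactly the summability estimate you extract from the exponential martingale inequality, so the noise rate enters in the same place in both arguments); it then combines a uniform decrease property of the deterministic semiflow, $\fench(\sol,\semiflow_{T}(y))\leq\max\{\eps,\fench(\sol,y)-\eps\}$, with the Taylor bound \eqref{eq:Fench-diff} and the recurrence of \cref{prop:smallnoise} to trap $\fench(\sol,Y(t))$ below $3\eps$ forever. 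Your proposal replaces the \ac{APT} machinery by direct estimates on the Fenchel coupling itself (per-window exponential martingale bounds plus Borel\textendash Cantelli, and drift absorption of the Itô correction); this buys a self-contained argument with no stochastic-approximation black box, at the price of having to do the trapping bookkeeping by hand \textendash\ and that is where your sketch, as written, does not quite close.

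Concretely, the claim that ``every window in which $X$ goes $\eps$-far from $\argmin\obj$ yields a net decrease of $\lyap$ of order $\nu(\eps)$'' is false: the far excursion inside a window can be arbitrarily brief, so no fixed decrease is guaranteed, and a decreases-versus-increases ledger does not by itself exclude the scenario in which $X$ stays close while the non-summable Itô correction lets $\lyap$ creep upward. The repair is to run the excursion argument at the level of $\lyap$ rather than of $\dist(X,\argmin\obj)$: for any small $a>0$, the uniform form of \eqref{eq:Fench-reg} (which follows from the sequential statement by a routine contradiction) gives an $\eps(a)>0$ such that $\lyap(t)>a$ forces $\norm{X(t)-\sol}\geq\eps(a)$, hence \textendash\ when the minimizer is unique \textendash\ a drift $\leq-\nu(\eps(a))<0$ in \eqref{eq:dFench-stoch-const}; once $t$ exceeds the (a.s. finite) threshold after which the Itô rate is below $\nu/2$ and every unit-window martingale oscillation is below $\delta<\nu/4$, any excursion of $\lyap$ above level $a$ starting from a recurrence time with $\lyap(t_{n})<a$ stays below $a+2\delta$, and since $a,\delta$ are arbitrary this yields $\lyap(t)\to0$ and then $X(t)\to\sol$ by \eqref{eq:Fench-norm}. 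Note that this fix uses uniqueness of the minimizer exactly where you flag the obstacle: with a non-singleton $\argmin\obj$, ``$\lyap$ large'' no longer implies ``$X$ far from $\argmin\obj$'', so tethering to the recurrence times alone does not rescue the fixed-$\sol$ Lyapunov argument. You are not at a disadvantage relative to the paper here \textendash\ its written proof also treats only the unique-minimizer case and asserts the general one is similar \textendash\ but your write-up should either make the same explicit restriction or work with a set-adapted quantity instead of $\fench(\sol,\cdot)$ for a single subsequential limit $\sol$.
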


The key challenge in obtaining this a.s. convergence result is that, even if we ignored the martingale term in \eqref{eq:dFench-stoch-const}, it is quite difficult to balance the drift (helpful) and Itô correction (antagonistic) terms.
Thus, in lieu of a direct Lyapunov approach, we will show that $X(t)$ ``tracks'' the deterministic dynamics \eqref{eq:MD} in a certain, precise sense (see below), and then leverage the convergence properties of \eqref{eq:MD} to deduce that $X(t)\to\argmin\obj$.

To quantify what ``tracking'' means in this context, we use the seminal notion of an \acdef{APT} due to Benaïm and Hirsch \cite{BH96,Ben99}:

\begin{definition}
\label{def:APT}
Let $(Y(t))_{t\geq0}$ be a continuous curve in $\dual$ and let $\semiflow_{t}\from\dual\to\dual$, $t\geq0$, be the semiflow of \eqref{eq:MD} on $\dual$
\textpar{i.e. $(\semiflow_{t}(y))_{t\geq0}$ denotes the solution orbit of \eqref{eq:MD} that starts at $y\in\dual$}.
Then, $Y$ is an \acdef{APT} of $\semiflow$ if
\begin{equation}
\label{eq:APT}
\lim_{t\to\infty} \sup_{0\leq h \leq T} \dnorm{Y(t+h) - \semiflow_{h}(Y(t))}
	= 0
	\quad
	\text{for all $T>0$}.
\end{equation}
\end{definition}

Heuristically, an \ac{APT} of \eqref{eq:MD} asymptotically follows the induced semiflow $\semiflow$ with arbitrary accuracy over windows of arbitrary length.
Nonetheless, this ``fixed horizon'' property does not suffice to establish the convergence of an \ac{APT} to $\argmin\obj$, despite the strong convergence properties of \eqref{eq:MD}.
On that account, the basic steps of our proof are as follows:
\begin{enumerate}
[\itshape i\hspace*{1pt}\upshape)]
\item
Using the analysis of \cite{BH96}, we show that the stated decay assumption for $\noisevol(x,t)$ implies that solutions of \eqref{eq:SMD} are \acp{APT} of \eqref{eq:MD}.
\item
By \cref{prop:smallnoise}, $\argmin\obj$ is recurrent under \eqref{eq:SMD}, so solutions of \eqref{eq:SMD} cannot stray too far from $\argmin\obj$ in the long run.
\item
Once a solution of \eqref{eq:SMD} gets close enough to $\argmin\obj$, the \ac{APT} property means that it becomes trapped in its vicinity and eventually converges to it.
\end{enumerate}

We make all this precise in \cref{app:smallnoise} where we prove \cref{prop:smallnoise} and \cref{thm:smallnoise}.

\subsection{Long-run concentration around solution points}
\label{sec:invariant}

Beyond the vanishing noise regime, the simple example $\obj(x) = x^{2}/2$ with $\noise(t) = W(t)$ shows that $X(t)$ may fluctuate around $\argmin\obj$ in perpetuity if the noise is persistent.
As such, our goal in what follows will be to analyze the long-run concentration properties of \eqref{eq:SMD} and to determine
the domain that $X(t)$ occupies with high probability in the long run.

For reasons that will become clear shortly, we focus on strongly convex problems that admit a (necessarily unique) interior solution $\sol\in\intfeas$.
More concretely, this means that there exists some $\strong>0$ (related to the convexity of the problem) such that
\begin{equation}
\label{eq:strong}
\obj(x) - \obj(\sol)
	\geq \tfrac{1}{2} \strong \norm{x - \sol}^{2}
	\quad
	\text{for all $x\in\feas$}.
\end{equation}
Our first result in this case is as follows:

\begin{proposition} 
\label{prop:hitting}
Assume \labelcref{eq:Lipschitz,eq:noise-reg} hold, and let $\obj$ be an $\strong$-strongly convex function with an interior minimizer $\sol\in\intfeas$.
If $X(t) = \mirror(\temp Y(t))$ is a solution of \eqref{eq:SMD} initialized at $y_{0}\in\dual$, we have
\begin{equation}
\label{eq:MSE-mean}
\exof*{\frac{1}{t} \int_{0}^{t} \norm{X(s) - \sol}^{2} \dd s}
	\leq \frac{2\fench(\sol,\temp y_{0})}{\temp\strong t} + \frac{\temp\noisevar}{\strong K}.
\end{equation}
Moreover, if $\tau_{\delta} = \inf\setdef{t>0}{\norm{X(t) - \sol} \leq \delta}$ denotes the first time at which $X(t)$ gets within $\delta>0$ of $\sol$, we also have
\begin{equation}
\label{eq:hitting-bound}
\exof{\tau_{\delta}}
	\leq \frac{2 K \fench(\sol,\temp y_{0})}{\temp \strong K \delta^{2} - \temp^{2}\noisevar},
\end{equation}
provided that $\temp < \strong K\delta^{2}/\noisevar$.
In particular, for $y_{0}=0$, we have the optimized bound
\begin{equation}
\label{eq:hitting-bound-opt}
\exof{\tau_{\delta}}
	\leq \frac{8\depth\noisevar}{\strong^{2}K\delta^{4}},
\end{equation}
achieved for $\temp = \strong K\delta^{2}/(2\noisevar)$.
\end{proposition}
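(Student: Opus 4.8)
The plan is to run a stochastic Lyapunov argument built around the ``$\temp$-deflated'' Fenchel coupling $\lyap(t) = \temp^{-1}\fench(\sol,\temp Y(t))$, just as in \cref{prop:smallnoise}, except that the $\strong$-strong convexity of $\obj$ now furnishes a genuinely dissipative drift in place of a merely nonpositive one. Concretely, I would start from the stochastic estimate \eqref{eq:dFench-stoch-const} — obtained by applying the Fenchel-coupling descent inequality \eqref{eq:Fench-diff} along the solution of \eqref{eq:SMD} and keeping the Itô second-order term — and bound its three constituents in turn. For the drift term, convexity of $\obj$ gives $\braket{\payv(X(s))}{X(s)-\sol} = -\braket{\nabla\obj(X(s))}{X(s)-\sol} \le -\bigl(\obj(X(s)) - \obj(\sol)\bigr)$, and then \eqref{eq:strong} improves this to $\braket{\payv(X(s))}{X(s)-\sol} \le -\tfrac12\strong\norm{X(s)-\sol}^{2}$. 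For the Itô correction, the noise bound \eqref{eq:noisevar} gives $\trof{\covmat(X(s),s)} = \fnorm{\noisevol(X(s),s)}^{2} \le \noisevar$, so that term contributes at most $\tfrac{\temp\noisevar}{2K}\,t$. Finally, since $\feas$ is compact and $\noisevol$ is bounded, the integrand $X(s)-\sol$ of the martingale term is bounded, so that term is a true square-integrable martingale with zero mean.

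Inserting these three bounds into \eqref{eq:dFench-stoch-const}, taking expectations, and using the nonnegativity $\lyap(t)\ge0$ of the Fenchel coupling, I obtain $\tfrac12\strong\int_{0}^{t}\exof{\norm{X(s)-\sol}^{2}}\dd s \le \temp^{-1}\fench(\sol,\temp y_{0}) + \tfrac{\temp\noisevar}{2K}\,t$; dividing through by $\tfrac12\strong t$ and exchanging expectation and integral by Fubini yields \eqref{eq:MSE-mean}.

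For the hitting-time estimates I would rerun the same computation, but evaluated at the bounded stopping time $t\wedge\tau_{\delta}$ (legitimate by optional stopping applied to the square-integrable martingale term). For $s<\tau_{\delta}$ one has $\norm{X(s)-\sol} > \delta$, so $\int_{0}^{t\wedge\tau_{\delta}}\norm{X(s)-\sol}^{2}\dd s \ge \delta^{2}\,(t\wedge\tau_{\delta})$; combining this with $\lyap(t\wedge\tau_{\delta})\ge0$ gives $\bigl(\tfrac12\strong\delta^{2} - \tfrac{\temp\noisevar}{2K}\bigr)\exof{t\wedge\tau_{\delta}} \le \temp^{-1}\fench(\sol,\temp y_{0})$. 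When $\temp < \strong K\delta^{2}/\noisevar$ the coefficient $(\strong K\delta^{2} - \temp\noisevar)/(2K)$ is strictly positive, so $\exof{t\wedge\tau_{\delta}} \le 2K\fench(\sol,\temp y_{0})/(\temp\strong K\delta^{2} - \temp^{2}\noisevar)$, and letting $t\to\infty$ via monotone convergence delivers \eqref{eq:hitting-bound}. For the optimized bound, specialize to $y_{0}=0$: then $\fench(\sol,0) = h(\sol) + h^{\ast}(0) = h(\sol) - \min_{\feas} h \le \depth$, and maximizing the denominator $\temp\strong K\delta^{2} - \temp^{2}\noisevar$ over $\temp>0$ selects $\temp = \strong K\delta^{2}/(2\noisevar)$ — which lies strictly below the threshold $\strong K\delta^{2}/\noisevar$ — whence substitution gives \eqref{eq:hitting-bound-opt}.

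The one step requiring genuine care — and which I would lay out in detail in the appendix — is the rigorous derivation of \eqref{eq:dFench-stoch-const}: by \cref{prop:mirror} the conjugate $h^{\ast}$ is $C^{1}$ with $(1/K)$-Lipschitz gradient $\mirror$, but it need not be $C^{2}$, so Itô's formula does not apply verbatim to $y\mapsto\fench(\sol,y)$. The resolution is that \eqref{eq:Fench-diff} is precisely the $C^{1,1}$ surrogate for a second-order Taylor expansion with Hessian dominated by $(1/K)\Id$; one therefore recovers \eqref{eq:dFench-stoch-const} either through a time-discretization/limiting argument along \eqref{eq:SMD} or by mollifying $h^{\ast}$, the quadratic-variation terms $\temp^{2}\,d[\noise_{i},\noise_{j}] = \temp^{2}\covmat_{ij}\,dt$ producing the Itô-correction contribution. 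The remaining ingredients — optional stopping, Fubini, monotone convergence, and the elementary convexity and noise inequalities above — are all routine.
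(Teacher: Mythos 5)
Your proposal is correct and follows essentially the same route as the paper's proof: the $\temp$-deflated Fenchel coupling as Lyapunov function, strong convexity for the dissipative drift, the $\noisevar/(2K)$ Itô correction, expectations plus nonnegativity for \eqref{eq:MSE-mean}, stopping at $t\wedge\tau_{\delta}$ and passing to the limit for \eqref{eq:hitting-bound}, and maximizing the denominator with $\fench(\sol,0)\leq\depth$ for \eqref{eq:hitting-bound-opt}. Your flagged technical point about $h^{\ast}$ being only $C^{1,1}$ is exactly how the paper handles it too, via the mollification-based weak Itô formula of \cref{prop:Ito}.
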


\begin{remark}
In the above, the constant $\strong$ has to do with the objective function $\obj$ and the feasible region $\feas$, while $K$ and $\depth$ are linked to the mirror map $\mirror$ (and, of course, also $\feas$).
The optimizer has no control over the former, but if its value can be estimated and the geometry of $\feas$ is relatively simple, the latter can be finetuned further to sharpen the above bounds.
\end{remark}

\begin{remark}
For a value-based analogue of \eqref{eq:MSE-mean} when $h$ is steep, see \cite[Prop.~4]{RB13}.
\end{remark}

\cref{prop:hitting} (proved in \cref{app:invariant}) provides a basic estimate of the long-run concentration of $X(t)$ around $\sol$, and also highlights the role of $\strong$ and $\noisedev$.
Specifically, \eqref{eq:hitting-bound-opt} shows that $X(t)$ hits a $\delta$-neighborhood of $\sol$ in time which is $\bigoh(1/\delta^{4})$ on average;
what's more, the multiplicative constant in this bound increases with the noise level in \eqref{eq:SMD} and decreases with the sharpness of the minimum point $\sol$ (as quantified by the strong convexity constant $\strong$ of $\obj$).

To obtain finer information regarding the concentration of $X(t)$ around $\sol$, we need to consider its occupation measure:

\begin{definition}
\label{def:occ}
The \emph{occupation measure} of $X$ at time $t\geq0$ is given by
\begin{equation}
\label{eq:occ}
\occ_{t}(A)
	= \frac{1}{t} \int_{0}^{t} \one(X(s) \in A) \dd s
	\quad
	\text{for every Borel $A\subseteq\feas$}.
\end{equation}
\end{definition}


\begin{figure}[tbp]
\centering
\subfigure{\includegraphics[width=.485\textwidth]{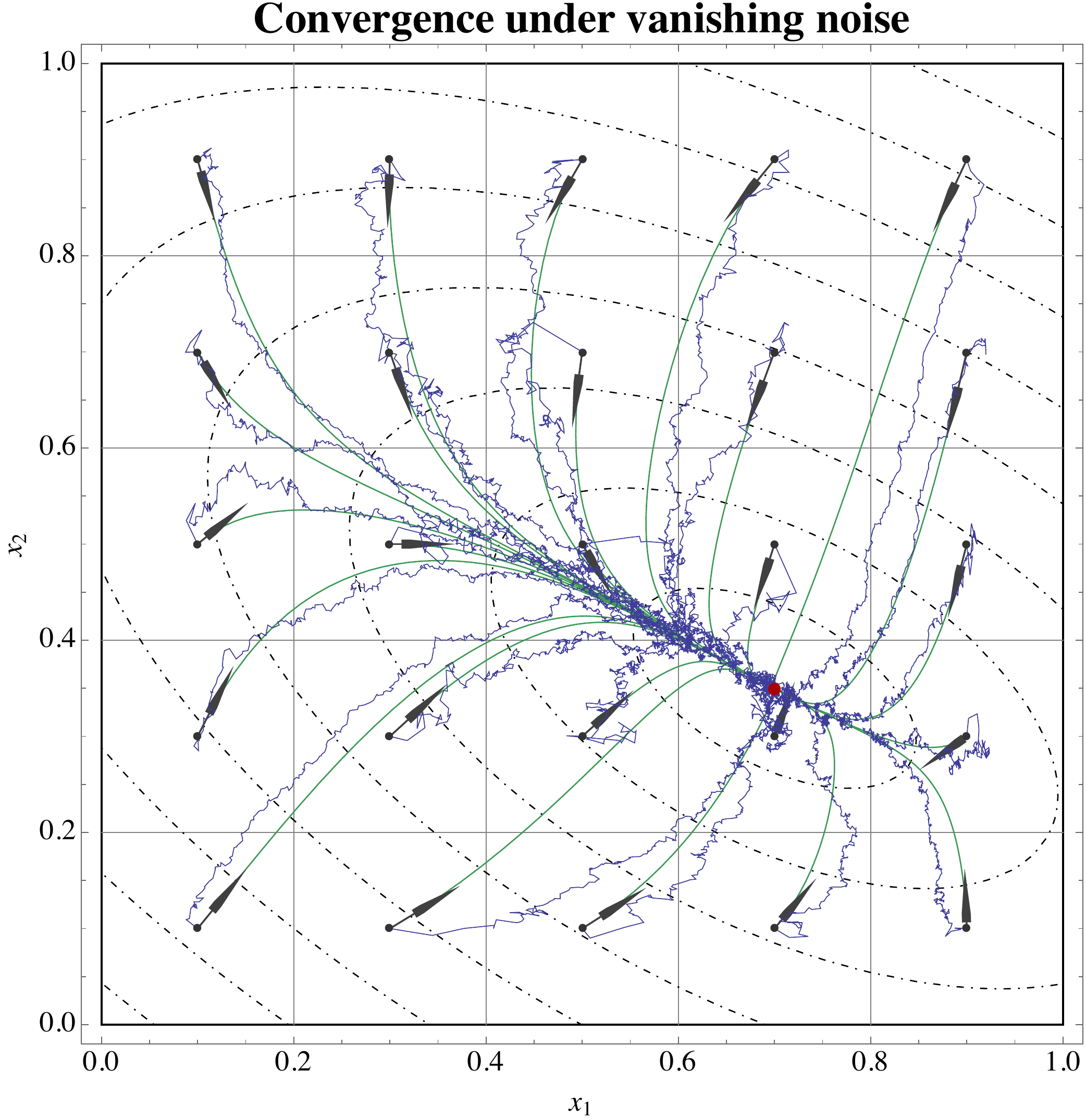}}
\hfill
\subfigure{\includegraphics[width=.485\textwidth]{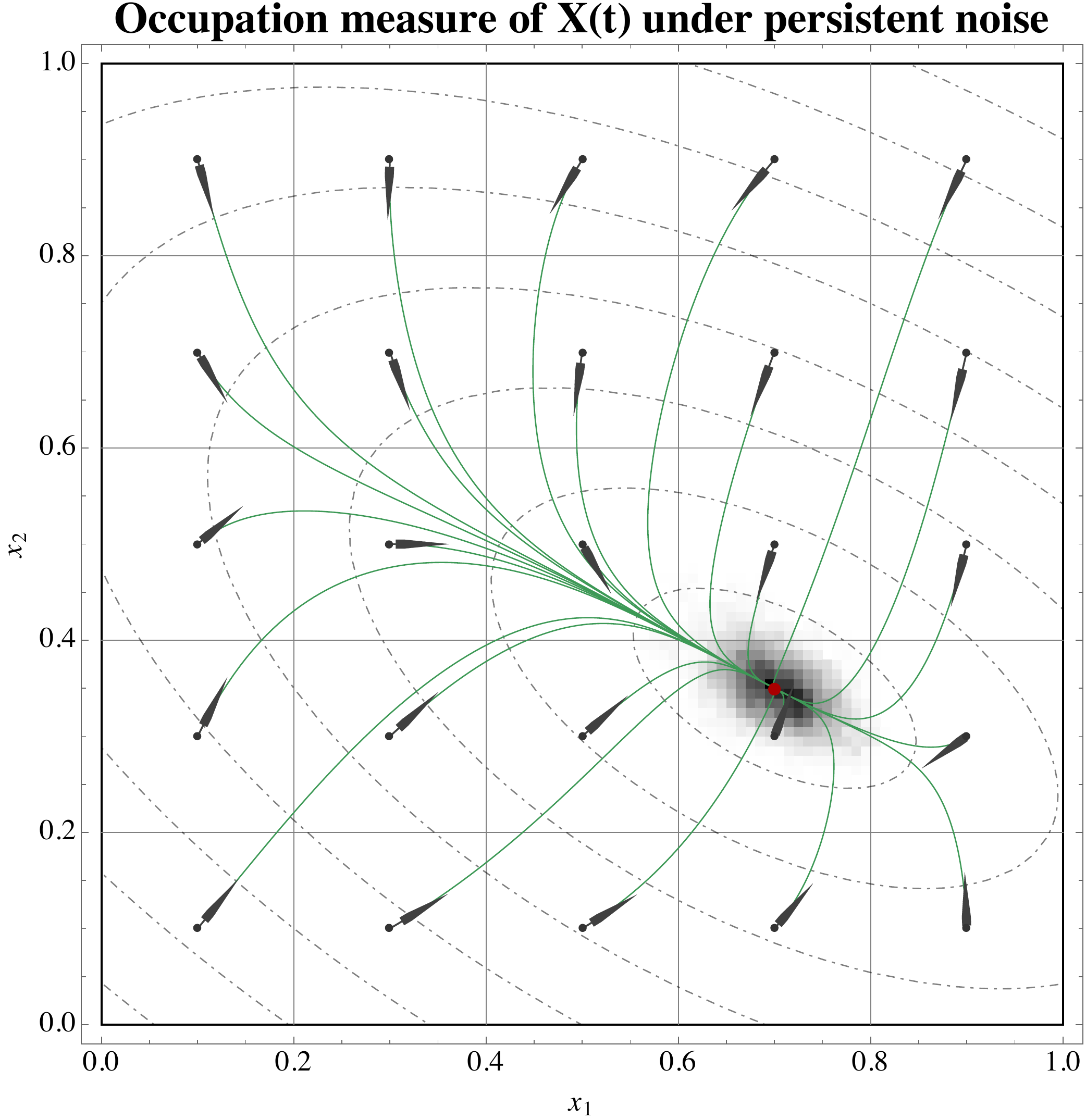}}%
\vspace{-1ex}
\caption{%
Numerical illustration of \eqref{eq:SMD} with $\mirror(y) = e^{y}/(1 + e^{y})$.
The dashed contours represent the level sets of $\obj$ over $\feas = [0,1]^{2}$,
and
the flowlines indicate the flow of \eqref{eq:MD}.
In the first figure, we exhibit the convergence of \eqref{eq:SMD} to $\argmin\obj$ when the volatility of the noise decays as $\Theta(1/\log t)$.
In the second, we estimate the long-run occupation measure of $X$:
darker shades of gray correspond to higher probabilities of observing $X$ in a given region.}
\label{fig:numerics}
\vspace{-1ex}
\end{figure}


In words, $\occ_{t}(A)$ is the fraction of time that $X$ spends in $A$ up to time $t$.
As such, the asymptotic concentration of $X$ around $\sol$ can be estimated by the quantity $\occ_{t}(\ball_{\delta})$, where
\begin{equation}
\label{eq:ball}
\ball_{\delta}
	\equiv \ball_{\delta}(\sol)
	= \setdef{x\in\feas}{\norm{x - \sol} \leq \delta}
\end{equation}
is the intersection of a $\delta$-ball centered at $\sol$ with $\feas$.
We then have the following concentration result (for a numerical illustration, see \cref{fig:numerics}):

\begin{theorem}
\label{thm:invariant}
Assume \labelcref{eq:Lipschitz,eq:noise-reg} hold, and let $\obj$ be an $\strong$-strongly convex function admitting an interior minimizer $\sol\in\intfeas$.
Moreover, fix some $\delta>0$ and suppose that the infinitesimal covariance matrix $\covmat$ of \eqref{eq:SMD} is time-homogeneous and uniformly po\-si\-tive-de\-fi\-nite \textpar{i.e. $\covmat(x,t) \equiv \covmat(x) \mgeq \lambda I$ for some $\lambda>0$}.
If \eqref{eq:SMD} is run with $\temp < \strong K\delta^{2}/\noisevar$,
then
\begin{equation}
\label{eq:inv-bound}
\occ_{t}(\ball_{\delta})
	\gtrsim 1 - \frac{\temp\noisevar}{\strong K\delta^{2}}
	\quad
	\text{for sufficiently large $t$ \as}.
\end{equation}
\end{theorem}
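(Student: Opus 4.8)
The plan is to run a \emph{pathwise} (almost sure) version of the Lyapunov computation that already underlies \cref{prop:hitting}, with the ``$\temp$-deflated'' Fenchel coupling $\lyap(t) = \temp^{-1}\fench(\sol,\temp Y(t))$ as energy function, and then to read off the occupation-measure bound from a time-averaged drift estimate. Feeding the dynamics \eqref{eq:SMD} into the Fenchel-coupling inequality \eqref{eq:Fench-diff} gives, in integrated form, exactly the stochastic Lyapunov estimate \eqref{eq:dFench-stoch-const}. Bounding its drift term by strong convexity --- by the gradient inequality and \eqref{eq:strong}, $\braket{\payv(X)}{X-\sol} = -\braket{\nabla\obj(X)}{X-\sol} \leq -(\obj(X)-\obj(\sol)) \leq -\tfrac12\strong\norm{X-\sol}^{2}$ --- and its Itô-correction term by the uniform noise bound \eqref{eq:noisevar} ($\trof{\covmat} = \fnorm{\noisevol}^{2}\leq\noisevar$), and using $\lyap\geq0$, this becomes
\[
\tfrac12\strong\int_{0}^{t}\norm{X(s)-\sol}^{2}\dd s \leq \lyap(0) + \frac{\temp\noisevar}{2K}\,t + M(t),
\]
where $M(t) = \sum_{i=1}^{n}\int_{0}^{t}(X_{i}(s)-\sol_{i})\dd\noise_{i}(s)$ is the martingale noise term of \eqref{eq:dFench-stoch-const}.

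The crux is then to show that $M(t)/t \to 0$ almost surely, after which one only has to divide by $t$ and let $t\to\infty$. Since $\feas$ is compact and $\covmat$ is bounded under \eqref{eq:noise-reg}, the quadratic variation of $M$ satisfies $\dd[M]_{t} = \inner{X(t)-\sol}{\covmat(X(t),t)(X(t)-\sol)}\dd t \lesssim \noisevar\,\norm{\feas}^{2}\,\dd t$, so that $[M]_{t} = \bigoh(t)$ and $\int_{0}^{\infty}(1+s)^{-2}\dd[M]_{s}<\infty$; the strong law of large numbers for continuous local martingales then yields $M(t)/t\to0$ a.s. --- or, more elementarily, either $[M]_{\infty}<\infty$ and $M$ converges a.s., or else $M(t)=B([M]_{t})$ by the Dambis--Dubins--Schwarz theorem and the law of the iterated logarithm for the Brownian motion $B$ gives $\abs{M(t)} = \bigoh(\sqrt{t\log\log t}) = o(t)$. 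Combining this with $\lyap(0)/t\to0$ yields the almost sure bound $\limsup_{t\to\infty} t^{-1}\int_{0}^{t}\norm{X(s)-\sol}^{2}\dd s \leq \temp\noisevar/(\strong K)$.

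Finally, one converts this time-averaged $L^{2}$ estimate into the occupation-measure statement: with $\ball_{\delta}$ as in \eqref{eq:ball}, $\norm{X(s)-\sol}^{2}\geq\delta^{2}\one(X(s)\notin\ball_{\delta})$, so $t^{-1}\int_{0}^{t}\norm{X(s)-\sol}^{2}\dd s \geq \delta^{2}\bigl(1-\occ_{t}(\ball_{\delta})\bigr)$ and therefore $\liminf_{t\to\infty}\occ_{t}(\ball_{\delta}) \geq 1 - \temp\noisevar/(\strong K\delta^{2})$ almost surely. This is exactly \eqref{eq:inv-bound}, the ``$\gtrsim$'' and the ``for sufficiently large $t$'' absorbing the usual multiplicative $(1-\eps)$ slack; the running condition $\temp<\strong K\delta^{2}/\noisevar$ is precisely what keeps the right-hand side positive, so that the bound is not vacuous.

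I expect the martingale step $M(t)/t\to0$ to be the only genuinely delicate ingredient, requiring the quadratic-variation bound together with an appropriate strong law (or Dambis--Dubins--Schwarz plus the iterated-logarithm law); the rest is a direct assembly of \cref{prop:Fenchel}, \eqref{eq:strong} and \eqref{eq:noisevar}. Finally, note that the time-homogeneity and uniform positive-definiteness of $\covmat$ are not actually used in the Lyapunov estimate above; they would instead allow one to replace the $\liminf$ by a bona fide limit, by invoking the ergodic theorem for the (then positive-recurrent, uniformly elliptic) diffusion traced out by $X(t)$ and identifying $\lim_{t}\occ_{t}$ with its unique invariant law.
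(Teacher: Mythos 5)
Your proof is correct, and it takes a genuinely different route from the paper's. The paper does not argue pathwise: it first passes to the restricted dual process $\Psi(t)=\dincl(Y(t))$ (to deal with the non-compactness of $\mirror^{-1}(\ball_{\delta})$ when $\feas$ is not full-dimensional), uses the time-homogeneity and uniform positive-definiteness of $\covmat$ to show that the generator of $\Psi$ is uniformly elliptic, combines this with the hitting-time bound of \cref{prop:hitting} and a compactness property of $\subd h(\ball_{\delta})$ to conclude that $\Psi$ is recurrent and admits a unique invariant distribution $\nu$, and then identifies $\lim_{t}\occ_{t}(\ball_{\delta})$ with $\nu(\temp^{-1}\submirror^{-1}(\ball_{\delta}))$ via the ergodic law of large numbers; the inequality itself is finally obtained by transferring the \emph{mean-square} bound \eqref{eq:MSE-mean} to this deterministic limit. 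Your argument instead stays entirely pathwise: the a.s.\ Lyapunov estimate from \cref{lem:dFench-stoch} (i.e.\ \eqref{eq:dFench-stoch-const}), strong convexity, the bound $\trof{\covmat}\leq\noisevar$, and the martingale law of large numbers (your $M(t)/t\to0$ step is exactly what \cref{lem:Wbound} provides, via Dambis--Dubins--Schwarz and the iterated logarithm) give $\limsup_{t} t^{-1}\int_{0}^{t}\norm{X(s)-\sol}^{2}\dd s \leq \temp\noisevar/(\strong K)$ a.s., and the Chebyshev-type conversion $\norm{X-\sol}^{2}\geq\delta^{2}\one(X\notin\ball_{\delta})$ yields the stated $\liminf$ bound, which is precisely what \eqref{eq:inv-bound} asserts. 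You are also right that the ellipticity and time-homogeneity hypotheses are not needed for this inequality; in the paper they buy the stronger structural conclusion that $\occ_{t}(\ball_{\delta})$ actually \emph{converges} a.s.\ to a deterministic limit given by the pushforward of the invariant measure (which is also what makes the transfer of the expectation bound \eqref{eq:MSE-mean} legitimate there), whereas your approach is shorter, hypothesis-light, and delivers exactly the claimed bound but no information about existence or uniqueness of an invariant distribution.
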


\begin{corollary}
\label{cor:conc}
Fix some tolerance $\eps>0$.
If \eqref{eq:SMD} is run with assumptions as above and $\temp \leq \eps \strong K\delta^{2}/\noisevar$, we have $\occ_{t}(\ball_{\delta}) \geq 1-\eps$ for all sufficiently large $t$ \as.
\end{corollary}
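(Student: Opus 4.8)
The plan is to read off \cref{cor:conc} directly from \cref{thm:invariant}---if $\temp\le\eps\strong K\delta^{2}/\noisevar$, then $\temp\noisevar/(\strong K\delta^{2})\le\eps$ and \eqref{eq:inv-bound} gives $\occ_{t}(\ball_{\delta})\ge1-\eps$ for all large $t$ almost surely---so the real content is \cref{thm:invariant}, which is what I would actually prove. The first step is the pathwise Markov inequality: since $X(s)\in\feas$ for every $s$, we have $\occ_{t}(\ball_{\delta})=1-\occ_{t}(\feas\setminus\ball_{\delta})$ and $\occ_{t}(\feas\setminus\ball_{\delta})=t^{-1}\int_{0}^{t}\one(\norm{X(s)-\sol}>\delta)\dd s\le\delta^{-2}\,t^{-1}\int_{0}^{t}\norm{X(s)-\sol}^{2}\dd s$. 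So the whole theorem reduces to the \emph{almost sure} ergodic bound $\limsup_{t\to\infty}t^{-1}\int_{0}^{t}\norm{X(s)-\sol}^{2}\dd s\le\temp\noisevar/(\strong K)$, i.e. a pathwise strengthening of the mean estimate \eqref{eq:MSE-mean} from \cref{prop:hitting}; the hypothesis $\temp<\strong K\delta^{2}/\noisevar$ is precisely what makes the resulting lower bound $1-\temp\noisevar/(\strong K\delta^{2})$ strictly positive.

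To establish this pathwise bound I would go back to the Lyapunov-type inequality \eqref{eq:dFench-stoch-const} for $\lyap(t)=\temp^{-1}\fench(\sol,\temp Y(t))$. Convexity of $\obj$ bounds the drift integrand by $-(\obj(X(s))-\obj(\sol))$, and $\strong$-strong convexity \eqref{eq:strong} sharpens this to $\braket{\payv(X(s))}{X(s)-\sol}\le-\tfrac12\strong\norm{X(s)-\sol}^{2}$; the Itô correction is dominated using $\trof{\covmat(X(s),s)}=\fnorm{\noisevol(X(s),s)}^{2}\le\noisevar$ from \eqref{eq:noisevar}. Writing $M(t)=\sum_{i}\int_{0}^{t}(X_{i}(s)-\sol_{i})\dd\noise_{i}(s)$ for the martingale part and using $\lyap(t)\ge0$, \eqref{eq:dFench-stoch-const} rearranges to $\tfrac12\strong\int_{0}^{t}\norm{X(s)-\sol}^{2}\dd s\le\lyap(0)+\tfrac{\temp\noisevar}{2K}t+M(t)$, hence $t^{-1}\int_{0}^{t}\norm{X(s)-\sol}^{2}\dd s\le 2\lyap(0)/(\strong t)+\temp\noisevar/(\strong K)+2M(t)/(\strong t)$. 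Since $\lyap(0)=\temp^{-1}\fench(\sol,\temp y_{0})<\infty$, the first term vanishes, so everything comes down to $M(t)/t\to0$ almost surely.

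That last step is the only place I expect any real difficulty, and it is where compactness of $\feas$ and the boundedness half of \eqref{eq:noise-reg} are needed: because the integrand of $M$ is then bounded, $M$ is a genuine square-integrable martingale with $\dd\langle M\rangle_{t}=\sum_{i,j}(X_{i}(s)-\sol_{i})(X_{j}(s)-\sol_{j})\covmat_{ij}(X(s),s)\dd s\le\norm{\feas}^{2}\noisevar\,\dd t$, so $\langle M\rangle_{t}=\bigoh(t)$ almost surely. The strong law of large numbers for continuous local martingales---via the Dubins--Schwarz time change $M(t)=\beta(\langle M\rangle_{t})$ and the law of the iterated logarithm for the Brownian motion $\beta$ (the case $\langle M\rangle_{\infty}<\infty$ being trivial, as $M$ then converges)---gives $M(t)/t\to0$ almost surely, which finishes the pathwise ergodic bound and hence \eqref{eq:inv-bound}.

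I would also note that this route never uses time-homogeneity or uniform ellipticity $\covmat(x)\mgeq\lambda I$; those hypotheses are the natural ones if one instead wants the sharper conclusion that the occupation measures $\occ_{t}$ converge weakly, almost surely, to a genuine invariant distribution of the diffusion $t\mapsto Y(t)$---via the classical ergodic theory of nondegenerate diffusions on a bounded domain, or the stationary Fokker--Planck equation---in which case one applies \eqref{eq:MSE-mean} at stationarity together with a portmanteau argument (handling $\partial\ball_{\delta}$) to transfer the concentration bound to the limit.
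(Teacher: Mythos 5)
Your deduction of the corollary from \cref{thm:invariant} is exactly how the paper obtains it (the corollary is an immediate specialization of \eqref{eq:inv-bound}), but your proof of the underlying concentration bound takes a genuinely different and more elementary route than the paper's. The paper proves \cref{thm:invariant} by passing to the restricted dual process $\Psi(t)=\dincl(Y(t))$, using uniform ellipticity and the hitting-time estimate of \cref{prop:hitting} to establish recurrence, invoking the existence and uniqueness of an invariant distribution $\nu$ together with the ergodic law of large numbers for nondegenerate diffusions, and only then transferring the mean-square bound \eqref{eq:MSE-mean} to the (deterministic) limit of the occupation measures. You instead bypass the invariant-measure machinery entirely: the Lyapunov inequality \eqref{eq:dFench-stoch-const}, the strong-convexity bound \eqref{eq:strong}, nonnegativity of $\lyap$, and the martingale strong law $\snoise(t)/t\to0$ \as (which is precisely \cref{lem:Wbound}, justified exactly as you describe via Dambis\textendash Dubins\textendash Schwarz and the law of the iterated logarithm, the integrand being bounded by compactness of $\feas$ and \eqref{eq:noise-reg}) give the pathwise ergodic estimate $\limsup_{t\to\infty}t^{-1}\int_{0}^{t}\norm{X(s)-\sol}^{2}\dd s\leq\temp\noisevar/(\strong K)$ \as, and the pathwise Chebyshev step then yields $\liminf_{t\to\infty}\occ_{t}(\ball_{\delta})\geq1-\temp\noisevar/(\strong K\delta^{2})$ \as, which implies \eqref{eq:inv-bound} and hence the corollary. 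Your observation about hypotheses is also accurate: your argument uses neither time-homogeneity nor $\covmat\mgeq\lambda I$ (nor, in fact, interiority of $\sol$), whereas those assumptions are what the paper's route needs \textendash\ and what it buys is the stronger structural conclusion that the occupation measures actually converge to the pushforward of a unique invariant distribution, rather than merely satisfying a liminf concentration bound.
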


\begin{remark}
Since $\covmat = \noisevol\noisevol^{\top}$, it follows that $\covmat$ is nonnegative-definite by default.
The stronger assumption $\covmat \mgeq \lambda I$ essentially posits that the volatility matrix $\noisevol$ of $\noise$ has $\rank(\noisevol) = n$, i.e. the components of $\noise$ are not completely correlated.
For instance, this condition is trivially satisfied in the baseline case where $\noise$ is a Wiener process in $\R^{n}$.
\end{remark}

\begin{remark}
It is also worth noting that the bound \eqref{eq:inv-bound} only depends on the mirror map $\mirror$ via its inverse Lipschitz constant $K$ (that is, the strong convexity constant of $h$).
\cref{eq:inv-bound} suggests that $K$ should be taken as large as possible (to have $\occ_{t}(\ball_{\delta})\approx1$).
However, in so doing, the process $X(t)$ will initially spend a much larger amount of time near the prox-center $x_{c} \equiv \argmin h$ of $\feas$, so there is a trade-off between the sharpness of the asymptotic concentration of $X(t)$ near $\sol$ and the time it takes to attain this asymptotic regime.
\end{remark}

In a nutshell, \cref{thm:invariant} states that the concentration of $X(t)$ around $\sol$ may be arbitrarily sharp if $\temp$ is taken small enough.
Indeed, for $\temp<\strong K\delta^{2}/\noisevar$, \cref{prop:hitting} shows that $\ball_{\delta}$ is \emph{recurrent}, i.e. $\probof{X(t) \in \ball_{\delta} \; \text{for some} \; t\geq0} = 1$ for every initial condition $y_{0}\in\dual$.
Relegating the (fairly intricate) details to \cref{app:invariant}, it can be shown that the stated assumptions guarantee the existence of a unique invariant distribution $\nu$ for the dual process $Y(t)$.
The pushforward of $\nu$ to $\feas$ is precisely the limit of the occupation measures $\occ_{t}$ of $X$ as $t\to\infty$, so \eqref{eq:inv-bound} follows by using the mean square bound \eqref{eq:MSE-mean} to estimate $\nu$.

We close this section by noting that the assumption that $\sol$ is interior is crucial in the statement of \cref{thm:invariant}.
As we shall see in the next section, if $\sol$ is a corner of $\feas$ (i.e. $\pcone(\sol)$ has nonempty interior), $Y(t)$ is \emph{transient} (not recurrent) and $X(t)$ \emph{converges} to $\sol$ (instead of fluctuating in a small neighborhood thereof).
Otherwise, when $\sol$ belongs to a nontrivial face of $\feas$, the dynamics \eqref{eq:SMD} exhibit a hybrid behavior:
$X(t)$ converges \as to the smallest face of $\feas$ that contains $\sol$ and fluctuates around $\sol$ along the relative interior of said face.
However, obtaining a precise result along these lines is fairly cumbersome, so we omit this analysis.

\subsection{Sharp solutions and linear programming}
\label{sec:sharp}

Consider now the elementary linear program
\begin{equation}
\label{eq:linear-simple}
\begin{aligned}
\textrm{minimize}
	&\quad
	1-x,
	\\
\textrm{subject to}
	&\quad
	0 \leq x \leq 1.
\end{aligned}
\end{equation}
Taking for concreteness $\temp=1$, $h(x) = x\log x + (1-x) \log(1-x)$ and $\noise(t) = \noisevol W(t)$ with constant $\noisevol$, the dynamics \eqref{eq:SMD} become
\begin{equation}
\label{eq:SMD-linear}
\begin{aligned}
dY
	&= dt + \noisevol\dd W,
	\\
X
	&= e^{Y}/(1+e^{Y}),
\end{aligned}
\end{equation}
and, after integrating, we get $Y(t) = t + \noisevol W(t)$.
By a trivial stochastic estimate, this implies that $Y(t)\geq t/2$ for large $t$ (except possibly on a $\prob$-null set), so $\lim_{t\to\infty} X(t) = 1$ \as.
In other words, in the simple linear program \eqref{eq:linear-simple}, $X(t)$ converges to $\argmin\obj$ with probability $1$, no matter the level of the noise.

The reason behind this convergence (as opposed to the case of interior minimizers) is that the drift of \eqref{eq:SMD-linear} does not vanish when $X(t)$ approaches $\argmin\obj$, so it ends up dominating the martingale term $W(t)$.
A nonvanishing gradient is typical of (generic) linear programs, so one would optimistically expect comparable results to hold whenever \eqref{eq:program} can be locally approximated by a linear program.
Following Polyak \cite[Chapter~5.2]{Pol87}, we formalize this idea by focusing on convex programs with \emph{sharp} solutions:

\begin{definition}
\label{def:sharp}
We say that $\sol\in\feas$ is a \emph{$\sharp$-sharp} minimum point of $\obj$ if
\begin{equation}
\label{eq:sharp}
\obj(x)
	\geq \obj(\sol) + \sharp \norm{x - \sol}
	\quad
	\text{for some $\sharp>0$ and all $x\in\feas$}.
\end{equation}
\end{definition}

From \cref{def:sharp}, it is easy to see that a sharp minimum point is the unique minimizer of $\obj$ and it remains invariant under small perturbations of $\obj$ (assuming of course that such a minimizer exists in the first place).
On top of that, with $\obj$ assumed smooth,%
\footnote{\cref{def:sharp} is meaningful even if $\obj$ is not smooth, but we only treat smooth functions here.}
we also have the following geometric characterization:

\begin{lemma}
\label{lem:sharp}
$\sol\in\feas$ is a $\sharp$-sharp solution of \eqref{eq:program} if and only if
\begin{equation}
\label{eq:sharp-tangent}
\braket{\payv(\sol)}{z}
	\leq - \sharp\norm{z}
	\quad
	\text{for some $\sharp>0$ and for all $z\in\tcone(\sol)$}.
\end{equation}
\end{lemma}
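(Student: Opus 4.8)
The plan is to unwind the sign convention $\payv(\sol) = -\nabla\obj(\sol)$, so that the claimed condition \eqref{eq:sharp-tangent} is equivalent to
\[
\braket{\nabla\obj(\sol)}{z} \geq \sharp\norm{z}
\qquad\text{for all } z\in\tcone(\sol),
\]
and then prove both implications using only convexity and $C^{1}$-smoothness of $\obj$. Recall from the notation section that $\tcone(\sol)$ is the closure of the set of rays $t(x-\sol)$, $t\geq0$, $x\in\feas$; convexity of $\feas$ makes this set a convex cone, which will be convenient for the homogeneity and continuity arguments below.

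For the ``if'' direction I would argue directly. Given the displayed inequality, take any $x\in\feas$. Since $\feas$ is convex, the segment $[\sol,x]$ lies in $\feas$, so $x-\sol\in\tcone(\sol)$; applying the hypothesis with $z=x-\sol$ gives $\braket{\nabla\obj(\sol)}{x-\sol}\geq\sharp\norm{x-\sol}$. Combining this with the gradient inequality for the convex function $\obj$, namely $\obj(x)\geq\obj(\sol)+\braket{\nabla\obj(\sol)}{x-\sol}$, yields exactly $\obj(x)\geq\obj(\sol)+\sharp\norm{x-\sol}$, i.e. \eqref{eq:sharp}. (The case $x=\sol$ is trivial.)

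For the ``only if'' direction, assume $\sol$ is $\sharp$-sharp. First I would handle directions of the form $z=x-\sol$ with $x\in\feas$: for $\lambda\in(0,1]$ we have $\sol+\lambda z\in\feas$ by convexity, so sharpness gives $\obj(\sol+\lambda z)-\obj(\sol)\geq\sharp\lambda\norm{z}$; dividing by $\lambda$ and letting $\lambda\to0^{+}$, the $C^{1}$-smoothness of $\obj$ identifies the limit of the difference quotient as $\braket{\nabla\obj(\sol)}{z}$, so $\braket{\nabla\obj(\sol)}{z}\geq\sharp\norm{z}$. Next, by positive homogeneity of both sides this extends to every $z=t(x-\sol)$, $t\geq0$, $x\in\feas$; finally, since $z\mapsto\braket{\nabla\obj(\sol)}{z}$ and $z\mapsto\sharp\norm{z}$ are both continuous, the inequality passes to the closure, i.e. to all of $\tcone(\sol)$. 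Translating back via $\payv=-\nabla\obj$ gives \eqref{eq:sharp-tangent}.

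The only step requiring any care is the last one: one must make sure that the first-order inequality established on the ``elementary'' directions $x-\sol$ genuinely propagates to the full tangent cone $\tcone(\sol)$, which is where the description of $\tcone(\sol)$ as the closure of these rays, together with the continuity of both sides in $z$, is used. Everything else is a routine application of the gradient inequality for convex functions and of the definition of the directional derivative.
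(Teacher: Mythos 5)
Your proposal is correct and follows essentially the same route as the paper: convexity plus the gradient inequality for the ``if'' direction, and the difference-quotient/limit argument for the ``only if'' direction, translated through $\payv(\sol)=-\nabla\obj(\sol)$. The only difference is that you spell out the homogeneity-and-continuity step needed to pass from the feasible directions $x-\sol$ to the full closed cone $\tcone(\sol)$, which the paper leaves implicit by writing the quotient directly for $z\in\tcone(\sol)$.
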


\begin{proof}
The ``if'' part follows trivially by convexity.
For the ``only if'' part, let $z\in\tcone(\sol)$ and note that \eqref{eq:sharp} gives
\begin{equation}
\frac{\obj(\sol + tz) - \obj(\sol)}{t}
	\geq \sharp \norm{z}
	\quad
	\text{for all sufficiently small $t>0$}.
\end{equation}
Hence, taking the limit $t\to0^{+}$, we get $\braket{\nabla\obj(\sol)}{z} \geq \sharp\norm{z}$ and \eqref{eq:sharp-tangent} follows.
\end{proof}

A further consequence of \cref{lem:sharp} is that $\payv(\sol)\in\intr(\pcone(\sol))$, implying that sharp solutions of smooth convex programs can only occur at \emph{corners} of $\feas$ (that is, points whose polar cone has nonempty topological interior).
In this sense, sharp minimizers constitute the flip side of the interior-point analysis of the previous section, a contrast which is further reflected in the following a.s. convergence result:%

\begin{theorem}
\label{thm:sharp}
Assume \labelcref{eq:Lipschitz,eq:Fench-reg,eq:noise-reg} hold and suppose that $\obj$ admits a \textpar{necessarily unique} sharp minimum point $\sol$.
If \eqref{eq:SMD} is run with a sufficiently small sensitivity parameter $\temp$, $X(t)$ converges to $\sol$ \as;
in addition, if the mirror map $\mirror$ is surjective, this convergence occurs in finite time \as.
\end{theorem}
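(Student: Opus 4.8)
The plan is to combine a stochastic Lyapunov estimate (confining $X(t)$ to an $O(\temp)$‑ball around $\sol$ on average) with a geometric analysis of the mirror map near the corner $\sol$: recall from \cref{lem:sharp} that $\payv(\sol)\in\intr\pcone(\sol)$, so once the dual variable $Y(t)$ has acquired enough drift along $\payv(\sol)$, the primal iterate $X(t)=\mirror(\temp Y(t))$ is pushed all the way to $\sol$; and, in the surjective case, $\temp Y(t)$ eventually becomes trapped inside $\subd h(\sol)$, pinning $X(t)$ at $\sol$ in finite time.

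\emph{Step 1 (Lyapunov estimate).} As in the deterministic analysis, set $\lyap(t)=\temp^{-1}\fench(\sol,\temp Y(t))$. Since $y\mapsto\fench(\sol,y)$ is $C^{1,1}$ with gradient $\mirror(\cdot)-\sol$, applying \eqref{eq:Fench-diff} along a partition of $[0,t]$ and passing to the limit (exactly as in the derivation of \eqref{eq:dFench-stoch-const}) gives
\[
\lyap(t)\le\lyap(0)+\int_0^t\braket{\payv(X(s))}{X(s)-\sol}\dd s+\frac{\temp}{2K}\int_0^t\trof{\covmat(X(s),s)}\dd s+M(t),
\]
where $M(t)=\int_0^t\braket{X(s)-\sol}{d\noise(s)}$ is a continuous local martingale with $\langle M\rangle_t\le\noisevar\norm{\feas}^2 t$, hence $M(t)/t\to0$ \as. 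By convexity and \eqref{eq:sharp}, $\braket{\payv(X(s))}{X(s)-\sol}\le-(\obj(X(s))-\obj(\sol))\le-\sharp\norm{X(s)-\sol}$, while $\trof{\covmat}\le\noisevar$ by \eqref{eq:noisevar}; rearranging and using $\lyap\ge0$ together with $M(t)/t\to0$ yields the a.s.\ bound
\[
\limsup_{t\to\infty}\frac1t\int_0^t\norm{X(s)-\sol}\dd s\le\frac{\temp\noisevar}{2K\sharp}=:\delta_\temp .
\]

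\emph{Step 2 (convergence of $X(t)$).} Integrating \eqref{eq:SMD}, $Y(t)=y_0+\int_0^t\payv(X(s))\dd s+\noise(t)$ with $\noise(t)/t\to0$ \as; combining this with Step 1 and \eqref{eq:Lipschitz} gives $\limsup_t\dnorm{Y(t)/t-\payv(\sol)}\le\Lip\,\delta_\temp$ \as, where $\Lip$ is the Lipschitz constant of $\payv$. Now take $\temp$ small enough that the ball $\bar B$ of radius $2\Lip\,\delta_\temp$ around $\payv(\sol)$ lies in $\intr\pcone(\sol)$ and $2\Lip\,\delta_\temp<\sharp$; for such $\temp$, eventually $Y(t)/t\in\bar B$, and every $w\in\bar B$ satisfies $\braket{w}{x-\sol}\le-(\sharp-2\Lip\,\delta_\temp)\norm{x-\sol}$ for all $x\in\feas$ (by \eqref{eq:sharp-tangent}, since $x-\sol\in\tcone(\sol)$). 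Writing $Y(t)=t\,w_t$ with $w_t\in\bar B$ and comparing $X(t)=\argmax_{x\in\feas}\{\temp t\braket{w_t}{x}-h(x)\}$ against $\sol$, this bound together with $h(X(t))-h(\sol)\ge-\depth$ (valid because $h$ is continuous on the compact set $\feas$) yields $\norm{X(t)-\sol}\le\depth/[\temp t(\sharp-2\Lip\,\delta_\temp)]\to0$. Hence $X(t)\to\sol$ \as. (One may equivalently phrase this via \eqref{eq:Fench-reg}, which guarantees that $\fench(\sol,\temp Y(t))\to0$ once neighborhoods of $\sol$ are known to be recurrent.)

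\emph{Step 3 (finite‑time pinning when $\mirror$ is surjective).} If $\mirror$ is surjective then $\sol\in\im\mirror=\dom\subd h$, so there is $\tilde y\in\subd h(\sol)$; since $h\equiv+\infty$ off $\feas$ we have $\subd h(\sol)+\pcone(\sol)=\subd h(\sol)$, hence $\tilde y+\pcone(\sol)\subseteq\subd h(\sol)$. By Step 2, $X(s)\to\sol$, so for $s$ past some a.s.-finite time $T_0$ the continuous field $\payv$ takes values in a fixed closed pointed subcone $\cone\subseteq\intr\pcone(\sol)$ bounded away from $0$; thus $v_t:=\int_{T_0}^t\payv(X(s))\dd s\in\cone$ with $\dnorm{v_t}\gtrsim t$, while $y_0+\int_0^{T_0}\payv(X(s))\dd s+\noise(t)=o(t)$. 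Since $\cone\setminus\{0\}\subseteq\intr\pcone(\sol)$, there is $\beta>0$ with $\ball(v,\beta\dnorm{v})\subseteq\pcone(\sol)$ for all $v\in\cone$; therefore $\temp Y(t)-\tilde y=\temp v_t+o(\dnorm{v_t})\in\pcone(\sol)$ for all large $t$, i.e.\ $\temp Y(t)\in\subd h(\sol)$, and \cref{prop:mirror}(1) gives $X(t)=\mirror(\temp Y(t))=\sol$ for all $t$ beyond some a.s.-finite time. The bookkeeping in Step 1 is routine — it is the $C^{1,1}$ Itô estimate already underlying \cref{prop:smallnoise}, plus the martingale law of large numbers — so the main obstacle is the geometry of Steps 2–3: one must quantify how small $\temp$ needs to be for the residual drift $\payv(X(t))$ to remain robustly inside $\intr\pcone(\sol)$, and then balance the linearly growing, cone‑valued drift integral against the sublinear martingale noise sharply enough to conclude that $\temp Y(t)$ is eventually trapped in $\subd h(\sol)$ — this last point being exactly what upgrades convergence to finite‑time convergence when $\mirror$ is surjective.
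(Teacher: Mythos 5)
Your proposal is correct, but it reaches the a.s.\ convergence statement by a genuinely different route than the paper. The paper's proof runs through three technical lemmas: recurrence of $\delta$-neighborhoods of $\sol$ (\cref{lem:sharp-recurrence}), a Girsanov-based ``escape'' estimate showing that after each visit the dual process gets trapped in a translated cone $\ybase+\cone\subseteq\intr(\pcone(\sol))\cup\{0\}$ with probability at least $1-e^{-\kappa\delta/(\temp\noisevar)}$ (\cref{lem:escape}), a strong-Markov/geometric-trials argument to upgrade this to probability $1$, and finally \cref{lem:conv-polar} to translate $\braket{Y(t)}{z}\to-\infty$ into $X(t)\to\sol$. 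You instead use the sharpness inequality inside the Fenchel--Lyapunov bound to get the time-average estimate $\limsup_t t^{-1}\int_0^t\norm{X(s)-\sol}\dd s\leq\temp\noisevar/(2K\sharp)$, deduce by the Lipschitz hypothesis and $\noise(t)=o(t)$ that $Y(t)/t$ eventually lies in a ball around $\payv(\sol)$ contained in $\intr(\pcone(\sol))$, and then close with a direct argmax comparison ($\braket{\temp Y(t)}{X(t)-\sol}\geq h(X(t))-h(\sol)\geq-\depth$) that yields the explicit bound $\norm{X(t)-\sol}\leq\depth/[\temp t(\sharp-2\Lip\temp\noisevar/(2K\sharp))]$. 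This avoids Girsanov, the strong Markov property and hypothesis \eqref{eq:Fench-reg} altogether, makes the admissible range of $\temp$ explicit, and even gives an a.s.\ $\bigoh(1/(\temp t))$ rate past an a.s.\ finite random time; what it does not give is the paper's non-asymptotic trapping probability $1-e^{-\kappa\delta/(\temp\noisevar)}$ after a single visit, which is of independent interest. Your finite-time argument is essentially the paper's: once $X(t)$ is near $\sol$, the drift integral grows linearly inside a closed convex cone generated by a small ball around $\payv(\sol)$, the remaining terms are $o(t)$, so $\temp Y(t)$ eventually enters $\tilde y+\pcone(\sol)\subseteq\subd h(\sol)$ (the inclusion being exactly \cref{lem:inv-cone}) and \cref{prop:mirror} pins $X(t)=\sol$. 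Two small presentation points: in Step 1, rather than ``applying \eqref{eq:Fench-diff} along a partition and passing to the limit'' (which glosses over the quadratic-variation limit), simply invoke \cref{lem:dFench-stoch}/\cref{prop:Ito} with constant $\temp$, which is precisely the inequality you need; and in Step 3 you should state that the cone you use is the (closed, convex) conical hull of a small ball around $\payv(\sol)$ not containing the origin, since convexity is what lets you conclude $v_t\in\cone$ and the ball-scaling property $\ball(v,\beta\dnorm{v})\subseteq\pcone(\sol)$.
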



As an important special case, note that every solution of a (generic) linear program is sharp.%
\footnote{``Generic linear program'' means here that $\feas$ is a polytope, $\obj\from\feas\to\R$ is affine, and $\obj$ is constant only along the zero-dimensional faces of $\feas$ \cite{Pol87}.}
\cref{thm:sharp} then gives:

\begin{corollary}
\label{cor:linear}
If \eqref{eq:program} is a generic linear program and \eqref{eq:SMD} is run with Euclidean projections \textpar{cf.~\cref{ex:Eucl}} and small enough $\temp$, $X(t)$ converges to $\argmin\obj$ in finite time \as.
\end{corollary}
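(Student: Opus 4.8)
The plan is to deduce \cref{cor:linear} as a direct specialization of \cref{thm:sharp}, so the work reduces to verifying that the hypotheses of \cref{thm:sharp} are met and that the stated surjectivity applies. First I would recall that, by definition, a generic linear program has a polytope feasible region $\feas$, an affine objective $\obj$, and $\obj$ nonconstant on every vertex-to-vertex segment; in particular $\obj$ attains its minimum at a unique vertex $\sol$ of $\feas$. The Lipschitz hypothesis \eqref{eq:Lipschitz} holds trivially since $\payv(x) = -\nabla\obj$ is constant (affine objective). Since the Euclidean regularizer $h(x) = \tfrac12\norm{x}_2^2$ is everywhere differentiable and the induced mirror map $\Eucl$ is surjective (\cref{ex:Eucl}), and since one checks directly that \eqref{eq:Fench-reg} holds for $\Eucl$ (as remarked after \cref{prop:Fenchel}), the remaining structural hypotheses are in place; \eqref{eq:noise-reg} is assumed outright in the running setup of \eqref{eq:SMD}.

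The one genuine point to establish is that $\sol$ is a \emph{sharp} minimizer in the sense of \cref{def:sharp}. For this I would invoke the geometric characterization of \cref{lem:sharp}: it suffices to show that $\braket{\payv(\sol)}{z} \leq -\sharp\norm{z}$ for all $z\in\tcone(\sol)$ and some $\sharp>0$. Since $\feas$ is a polytope and $\sol$ is a vertex, the tangent cone $\tcone(\sol)$ is a finitely generated (polyhedral) cone, and by genericity $\obj$ strictly decreases along every edge of $\feas$ emanating from $\sol$; hence $\braket{\payv(\sol)}{e_j} > 0$ for each of the finitely many extreme rays $e_j$ of $\tcone(\sol)$ (normalized so $\norm{e_j}=1$). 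Taking $\sharp = \min_j \braket{\payv(\sol)}{e_j} > 0$ and using that any $z\in\tcone(\sol)$ is a nonnegative combination $z = \sum_j \mu_j e_j$ with $\norm{z} \leq \sum_j \mu_j$, one gets $\braket{\payv(\sol)}{z} = \sum_j \mu_j \braket{\payv(\sol)}{e_j} \geq \sharp \sum_j \mu_j \geq \sharp\norm{z}$ — note the sign convention must be tracked carefully so that $\payv = -\nabla\obj$ points into the decrease direction, matching \eqref{eq:sharp-tangent}. This confirms that $\sol$ is $\sharp$-sharp.

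With sharpness established, \cref{thm:sharp} applies directly: for a sufficiently small sensitivity parameter $\temp$, the solution $X(t)$ of \eqref{eq:SMD} converges to $\sol = \argmin\obj$ almost surely, and because $\Eucl$ is surjective, this convergence occurs in finite time almost surely. That is exactly the assertion of \cref{cor:linear}.

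The main obstacle I anticipate is not analytic but bookkeeping: pinning down precisely that ``generic linear program'' forces a vertex minimizer with a strictly decreasing objective along every outgoing edge, and then translating ``strictly decreasing along edges'' into the uniform inequality \eqref{eq:sharp-tangent} over the \emph{entire} tangent cone via a finite-generation argument, all while keeping the sign conventions ($\payv$ versus $\nabla\obj$, minimization versus the $\argmax$ in the mirror map) consistent. Everything else is an immediate citation of \cref{thm:sharp} and \cref{ex:Eucl}.
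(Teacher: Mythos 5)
Your proposal follows exactly the paper's route: the paper simply notes that every solution of a generic linear program is sharp and then invokes \cref{thm:sharp} together with the surjectivity of the Euclidean mirror map (\cref{ex:Eucl}), and your verification of \eqref{eq:Lipschitz}, \eqref{eq:Fench-reg}, and sharpness via \cref{lem:sharp} merely fills in the details the paper leaves implicit. One sign slip to correct in the sharpness step: since $\sol$ is the (unique, vertex) minimizer, genericity gives that $\obj$ strictly \emph{increases} along each edge emanating from $\sol$, so $\braket{\payv(\sol)}{e_j} < 0$ for every unit extreme ray $e_j$ of $\tcone(\sol)$; setting $\sharp = \min_j \abs{\braket{\payv(\sol)}{e_j}} > 0$ and writing $z = \sum_j \mu_j e_j$ with $\norm{z} \leq \sum_j \mu_j$ then yields $\braket{\payv(\sol)}{z} \leq -\sharp\norm{z}$, which is what \eqref{eq:sharp-tangent} actually requires (your displayed chain has the reversed sign, and the objective does not ``decrease'' along outgoing edges at a minimizer).
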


To gain some insight in the proof of \cref{thm:sharp}, note first that the driving vector field $\payv(x)$ of \eqref{eq:SMD} points towards $\sol$ for all $x\in\feas$ (by convexity).
Thanks to this basic property, almost every solution of \eqref{eq:SMD} visits any neighborhood of $\sol$ infinitely many times \as.
However, when $X(t)$ is near $\sol$, the sharpness of the solution ``traps'' $X(t)$ near $\sol$ and does not allow any overshoots (as in the interior case) because $\sol$ is a corner of $\feas$.
By a hitting time argument based on Girsanov's theorem, it is then possible to show that the dual process $Y(t)$ escapes to infinity along a direction contained in the polar cone $\pcone(\sol)$ of $\feas$ at $\sol$.
Then, the a.s. convergence of $X(t)$ to $\sol$ follows from a straightforward geometric argument.

We make all this precise in \cref{app:sharp}.

\subsection{Rectification}
\label{sec:transforms}

In this section, we examine a ``rectified'' variant of \eqref{eq:SMD} which is run with a decreasing sensitivity parameter and which takes into account all past information up to time $t$.
Specifically, motivated by \hyperref[thm:conv-det]{Theorem \ref*{thm:conv-det}(i)}, consider the transformed process
\begin{subequations}
\label{eq:X-var}
\begin{flalign}
\label{eq:X-avg}
\wilde X(t)
	&= \frac{1}{t} \int_{0}^{t} X(s) \dd s,
\intertext{or}
\label{eq:X-best}
\wilde X(t)
	&= X(s_{t})
	\quad
	\text{with $s_{t}\in\argmin\nolimits_{0\leq s \leq t} \obj(X(s))$},
\end{flalign}
\end{subequations}
corresponding respectively to the long-run average (also known as the ``ergodic average'' in optimization) and the ``best value'' of $X$ up to time $t$.

The results of \cite{RB13} and the analysis of \cref{sec:invariant} indicate that $\wilde X(t)$ is concentrated around interior solutions of $\feas$ (in the long run and in probability), provided that \eqref{eq:SMD} is run with sufficiently small $\temp$.
That said, in a black-box setting where knowledge about \eqref{eq:program} and the noise process $\noise(t)$ is not readily available, the choice of $\temp$ would essentially become a matter of trial and error.
Thus, a meaningful work-around would be to employ a variable sensitivity parameter $\temp\equiv\temp(t)$ which decreases to $0$ as $t\to\infty$.

Since $Y(t) = \bigoh(t)$ by the Lipschitz assumption \eqref{eq:Lipschitz}, $\temp(t)$ should not decrease to zero faster than $1/t$:
otherwise, $X(t) = \mirror(\temp(t) Y(t))$ would converge to the prox-center $x_{c} \equiv \argmin_{x\in\feas} h(x)$ of $\feas$ with probability $1$.
With this in mind, we make the following assumption throughout this section:
\begin{equation}
\label{eq:temp}
\tag{$\mathbf{H}_{4}$}
\text{$\temp(t)$ is Lipschitz continuous, nonincreasing, and $\lim\nolimits_{t\to\infty} t\temp(t) = \infty$}.
\end{equation}
Under this assumption, we have:

\begin{theorem}
\label{thm:conv-var}
Assume \labelcref{eq:Lipschitz,,eq:noise-reg,eq:temp} hold.
Then, the rectified process $\wilde X(t)$ enjoys the performance guarantees
\begin{flalign}
\label{eq:rate-as}
\obj(\wilde X(t))
	&\leq \min\obj
	+ \frac{\depth}{t \temp(t)}
	+ \frac{\noisevar}{2Kt} \int_{0}^{t} \temp(s) \dd s
	+ \bigoh(\sqrt{\log \log t/t})
	\quad
	\text{\textup(a.s.\textup)},
\intertext{and}
\label{eq:rate-mean}
\exof{\obj(\wilde X(t))}
	&\leq \min\obj
	+ \frac{\depth}{t\temp(t)}
	+ \frac{\noisevar}{2Kt} \int_{0}^{t} \temp(s) \dd s
	+ \bigoh(1/t),
\end{flalign}
where $\depth = \max\setdef{h(x') - h(x)}{x,x'\in\feas}$.
In particular, if $\lim_{t\to\infty} \temp(t) = 0$, we have $\wilde X(t) \to \argmin\obj$ \textup(a.s.\textup).
\end{theorem}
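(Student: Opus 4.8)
The plan is to run the Lyapunov analysis behind \cref{thm:conv-det,prop:smallnoise}, but with a \emph{time-varying} deflated Fenchel coupling. Fix $\sol\in\argmin\obj$ and set
\[
\lyap(t)
	= \temp(t)^{-1}\,\fench(\sol,\temp(t)Y(t)),
\]
so that $\lyap(t)\geq0$ and, since $h^{\ast}(\temp(t)Y(t)) = \braket{\temp(t)Y(t)}{X(t)} - h(X(t))$, a short computation gives $\lyap(t) = \temp(t)^{-1}[h(\sol) - h(X(t))] + \braket{Y(t)}{X(t) - \sol}$. Because $\temp$ is Lipschitz (hence of finite variation), differentiating the product $\temp(t)Y(t)$ contributes an extra drift $\temp'(t)Y(t)\,\dd t$ relative to the constant-$\temp$ case. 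Applying Itô's formula to $h^{\ast}$ — whose gradient is the $(1/K)$-Lipschitz map $\mirror$ (\cref{prop:mirror}), so that the second-order term is handled exactly as in the derivation of \eqref{eq:dFench-stoch-const} — and collecting terms, the contributions carrying the (a priori unbounded) factor $\braket{Y(t)}{X(t) - \sol}$ cancel, leaving the pathwise inequality
\[
\dd\lyap(t)
	\leq \braket{\payv(X)}{X - \sol}\,\dd t
	+ [h(\sol) - h(X)]\,\dd(\temp(t)^{-1})
	+ \tfrac{\temp(t)}{2K}\trof{\covmat(X,t)}\,\dd t
	+ \braket{\dd\noise}{X - \sol}.
\]
This cancellation — which makes the score process $Y(t)$ harmless despite growing linearly in $t$ — is the only genuinely new ingredient relative to \eqref{eq:dFench-stoch-const}.

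Next I would bound the right-hand side term by term. Convexity of $\obj$ gives $\braket{\payv(X)}{X - \sol} = -\braket{\nabla\obj(X)}{X - \sol} \leq \obj(\sol) - \obj(X) = \min\obj - \obj(X)$; since $\temp$ is nonincreasing, $\dd(\temp(t)^{-1})$ is a nonnegative measure and $h(\sol) - h(X)\leq\depth$, so $[h(\sol) - h(X)]\,\dd(\temp(t)^{-1}) \leq \depth\,\dd(\temp(t)^{-1})$, which integrates to at most $\depth/\temp(t)$; and $\trof{\covmat(X,t)} = \fnorm{\noisevol(X,t)}^{2}\leq\noisevar$ by \eqref{eq:noisevar}. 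Integrating over $[0,t]$, using $\lyap(t)\geq0$, and rearranging then yields
\[
\int_{0}^{t}(\obj(X(s)) - \min\obj)\,\dd s
	\leq \lyap(0) + \frac{\depth}{\temp(t)} + \frac{\noisevar}{2K}\int_{0}^{t}\temp(s)\,\dd s + M(t),
\]
where $M(t) = \int_{0}^{t}\braket{\dd\noise(s)}{X(s) - \sol}$ and $\lyap(0)$ is a finite constant independent of $t$. Dividing by $t$ and applying Jensen's inequality (for $\wilde X(t) = t^{-1}\int_{0}^{t}X(s)\,\dd s$), or the trivial bound $\obj(X(s_{t}))\leq t^{-1}\int_{0}^{t}\obj(X(s))\,\dd s$ (for the best-value variant), bounds $\obj(\wilde X(t)) - \min\obj$ by the explicit terms in \eqref{eq:rate-as}\textendash\eqref{eq:rate-mean} plus the remainders $\lyap(0)/t$ and $M(t)/t$.

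It remains to control the martingale remainder $M(t)$. Since $\feas$ is compact and $\noisevol$ is bounded, the integrand of $M$ is bounded, so $M$ is a square-integrable martingale with $[M,M]_{t}\leq\dfeas^{2}\noisevar\,t = \bigoh(t)$; in particular $\exof{M(t)} = 0$, which together with $\lyap(0)/t = \bigoh(1/t)$ gives \eqref{eq:rate-mean}. For the almost-sure statement, the law of the iterated logarithm for continuous martingales (via the Dambis\textendash Dubins\textendash Schwarz time change) yields $M(t) = \bigoh(\sqrt{[M,M]_{t}\log\log[M,M]_{t}}) = \bigoh(\sqrt{t\log\log t})$ \as, so $M(t)/t = \bigoh(\sqrt{\log\log t/t})$ and \eqref{eq:rate-as} follows (absorbing $\lyap(0)/t$). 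Finally, if $\temp(t)\to0$ then $\depth/(t\temp(t))\to0$ by \eqref{eq:temp}, $t^{-1}\int_{0}^{t}\temp(s)\,\dd s\to0$ by Cesàro summation, and the error terms vanish; hence $\obj(\wilde X(t))\to\min\obj$ \as, and since $\obj$ is continuous on the compact set $\feas$, the sublevel sets $\setdef{x\in\feas}{\obj(x)\leq\min\obj+\eps}$ shrink to $\argmin\obj$ as $\eps\downarrow0$, so $\wilde X(t)\to\argmin\obj$ \as.

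The step I expect to require the most care is the passage from the finite-step estimate \eqref{eq:Fench-diff} to the stochastic differential inequality for $\lyap$: because $h^{\ast}$ is only $C^{1,1}$ (its gradient $\mirror$ being merely Lipschitz), the classical Itô formula does not apply verbatim, and one must instead sum \eqref{eq:Fench-diff} along a shrinking partition of $[0,t]$ and pass to the limit — precisely the argument that yields \eqref{eq:dFench-stoch-const} under the hypotheses \eqref{eq:Lipschitz} and \labelcref{eq:noise-reg}. The new drift $\temp'(t)Y(t)\,\dd t$ is harmless here exactly because it is of finite variation: it contributes nothing to the quadratic-variation (Itô-correction) term, and, after the cancellation noted above, nothing to $\dd\lyap$ either.
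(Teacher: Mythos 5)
Your proposal is correct and follows essentially the same route as the paper: the convexity/Jensen reduction to $t^{-1}\int_{0}^{t}\braket{\payv(X(s))}{\sol-X(s)}\dd s$, the time-varying deflated Fenchel coupling controlled via the weak ($C^{1,1}$, mollification-based) Itô formula — which is exactly the paper's \cref{lem:dFench-stoch}, whose variable-$\temp$ term $-\int\dot\temp\temp^{-2}[h(\sol)-h(X)]$ you re-derive inline through the cancellation of the $\dot\temp\temp^{-1}\braket{Y}{X}$ terms — followed by the same term-by-term bounds ($\depth/\temp(t)$, $\noisevar/(2K)\int\temp$), the Dambis\textendash Dubins\textendash Schwarz time change plus law of the iterated logarithm for the $\bigoh(\sqrt{t\log\log t})$ almost-sure bound, and the zero-mean martingale argument for the expectation bound. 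The only cosmetic difference is that you apply convexity to the drift before integrating rather than after, which is equivalent.
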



\begin{corollary}
\label{cor:rate-var}
Suppose that $\temp(t) \propto t^{-\beta}$ for some $\beta\in(0,1)$ and all $t\geq1$.
Then:
\begin{equation}
\label{eq:rate-var}
\obj(\wilde X(t)) - \min\obj
	= \begin{cases}
	\bigoh\left(t^{-\beta}\right)
		&\quad
		\text{if $0<\beta<\frac{1}{2}$},
		\\
	\bigoh\left(\sqrt{\log \log t /t}\right)
		&\quad
		\text{if $\beta=\frac{1}{2}$},
		\\
	\bigoh\left(t^{\beta-1}\right)
		&\quad
		\text{if $\frac{1}{2}<\beta<1$}.
	\end{cases}
\end{equation}
\end{corollary}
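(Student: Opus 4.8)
The plan is to derive \eqref{eq:rate-var} directly from \cref{thm:conv-var} by substituting the prescribed schedule and then comparing the resulting exponents. Fix a constant $c>0$ and set $\temp(t) = c\,t^{-\beta}$ for $t\geq1$, extended by the constant value $\temp(1)$ on $[0,1]$. First I would check that this choice satisfies \eqref{eq:temp}: on $[1,\infty)$ the map $t\mapsto c\,t^{-\beta}$ is $C^{1}$ with derivative bounded in absolute value by $c\beta$, hence Lipschitz; it is plainly nonincreasing; and $t\temp(t) = c\,t^{1-\beta}\to\infty$ because $\beta<1$. The constant extension on $[0,1]$ preserves Lipschitz continuity and monotonicity and does not affect the limit, so \eqref{eq:temp} holds and \cref{thm:conv-var} applies.

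Next I would evaluate each term of the a.s. bound \eqref{eq:rate-as}. The first deterministic term is $\depth/(t\temp(t)) = (\depth/c)\,t^{\beta-1}$. For the second, $\int_{0}^{t}\temp(s)\dd s = \int_{0}^{1}\temp(s)\dd s + c\int_{1}^{t}s^{-\beta}\dd s = \bigoh(1) + \frac{c}{1-\beta}(t^{1-\beta}-1)$ since $\beta<1$, so $\frac{\noisevar}{2Kt}\int_{0}^{t}\temp(s)\dd s = \frac{\noisevar c}{2K(1-\beta)}\,t^{-\beta} + \bigoh(1/t)$. Combining these with the remainder term of \cref{thm:conv-var} and using $\obj(\wilde X(t))-\min\obj\geq0$ (so the estimate is one-sided and the $\bigoh(1/t)$ contribution is absorbed) yields
\[
\obj(\wilde X(t)) - \min\obj = \bigoh(t^{\beta-1}) + \bigoh(t^{-\beta}) + \bigoh(t^{-1/2}\sqrt{\log\log t}).
\]

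Finally I would compare exponents. Since $-\beta \gtrless \beta-1$ according as $\beta\lessgtr\tfrac12$, and $-\beta\gtrless-\tfrac12$ according as $\beta\lessgtr\tfrac12$ (the subpolynomial factor $\sqrt{\log\log t}$ not affecting these comparisons), the dominant term is $t^{-\beta}$ when $0<\beta<\tfrac12$, is $t^{-1/2}\sqrt{\log\log t}$ when $\beta=\tfrac12$ — where $t^{\beta-1}=t^{-\beta}=t^{-1/2}$ are both dominated by it — and is $t^{\beta-1}$ when $\tfrac12<\beta<1$. This is precisely \eqref{eq:rate-var}. There is essentially no obstacle here: the corollary is a term-by-term substitution into \cref{thm:conv-var} followed by an exponent bookkeeping. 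The only points requiring a modicum of care are the integrability of $\temp$ near the origin (handled by the constant extension, which contributes only $\bigoh(1/t)$ after division by $t$) and the routine verification of \eqref{eq:temp}.
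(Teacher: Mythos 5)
Your proposal is correct and follows exactly the route the paper intends: the corollary is a direct substitution of $\temp(t)\propto t^{-\beta}$ into the bound \eqref{eq:rate-as} of \cref{thm:conv-var}, followed by the exponent comparison you carry out (the paper gives no separate proof precisely because of this). Your extra care with the constant extension near $t=0$ and the verification of \eqref{eq:temp} is sound and only strengthens the write-up.
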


\begin{corollary}
\label{cor:rate-opt}
If $\temp(t) = \sqrt{\depth K/\noisevar} \, \min\{1,1/\sqrt{t}\}$, we have
\begin{equation}
\label{eq:rate-mean-opt}
\exof{\obj(\wilde X(t))}
	\leq \min\obj + 2 \sqrt{\depth \noisevar/(Kt)}.
\end{equation}
\end{corollary}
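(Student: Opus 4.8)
The plan is to obtain \eqref{eq:rate-mean-opt} directly from the mean-value guarantee \eqref{eq:rate-mean} of \cref{thm:conv-var} by substituting the prescribed schedule and then optimizing the resulting expression. Write $c \equiv \sqrt{\depth K/\noisevar}$, so that $\temp(t) = c\min\{1,1/\sqrt t\}$. The first thing to check is that this $\temp$ is admissible, i.e. that it satisfies \eqref{eq:temp}: it equals the constant $c$ on $[0,1]$ and $c/\sqrt t$ on $[1,\infty)$, so it is continuous and nonincreasing, and it is Lipschitz because its derivative vanishes on $[0,1]$ and has absolute value at most $c/2$ on $[1,\infty)$; finally $t\temp(t) = c\sqrt t \to \infty$. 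Hence \cref{thm:conv-var} applies and \eqref{eq:rate-mean} holds for this schedule.

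Next I would evaluate the two leading terms of \eqref{eq:rate-mean} for $t\geq1$, where $\temp(t) = c/\sqrt t$. For the first term, the choice of $c$ gives
\[
\frac{\depth}{t\temp(t)} = \frac{\depth}{c\sqrt t} = \sqrt{\frac{\depth\noisevar}{Kt}}.
\]
For the integral term, splitting the integral at $s=1$ yields $\int_0^t \temp(s)\,ds = c + \int_1^t c s^{-1/2}\,ds = c(2\sqrt t - 1) \le 2c\sqrt t$, so that
\[
\frac{\noisevar}{2Kt}\int_0^t \temp(s)\,ds \le \frac{c\noisevar}{K\sqrt t} = \sqrt{\frac{\depth\noisevar}{Kt}},
\]
again using $c = \sqrt{\depth K/\noisevar}$.

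Combining these two estimates with \eqref{eq:rate-mean} gives $\exof{\obj(\wilde X(t))} \le \min\obj + 2\sqrt{\depth\noisevar/(Kt)} + \bigoh(1/t)$, and it remains only to account for the residual $\bigoh(1/t)$ term. Here I would observe that for any schedule of the form $\temp(t) = a/\sqrt t$ the sum of the two leading terms is $\tfrac{1}{\sqrt t}\bigl(\depth/a + a\noisevar/K\bigr)$, which is minimized over $a>0$ at $a = \sqrt{\depth K/\noisevar}$ by elementary AM--GM, producing exactly the factor $2$ and the constant $\sqrt{\depth\noisevar/K}$; moreover the slack $-c\noisevar/(2Kt)$ coming from the exact value $c(2\sqrt t - 1)$ of the integral, together with the fact that the $\bigoh(1/t)$ remainder in \eqref{eq:rate-mean} is of strictly lower order than $t^{-1/2}$, absorbs into the clean bound \eqref{eq:rate-mean-opt}. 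Every step is a routine substitution or an elementary inequality, so there is no genuine obstacle; the only points requiring a modicum of care are the verification that the prescribed $\temp$ meets \eqref{eq:temp} and the bookkeeping of the lower-order term.
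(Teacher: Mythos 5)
Your route is the intended one \textendash\ the corollary is meant to follow by plugging the schedule $\temp(t)=c\min\{1,1/\sqrt t\}$, $c=\sqrt{\depth K/\noisevar}$, into \eqref{eq:rate-mean} \textendash\ and your verification of \eqref{eq:temp} and the evaluation of the two leading terms (each equal to, respectively bounded by, $\sqrt{\depth\noisevar/(Kt)}$ for $t\geq1$) is exactly right. The gap is in how you dispose of the $\bigoh(1/t)$ remainder. The statement ``the remainder is of strictly lower order than $t^{-1/2}$, so it absorbs into the clean bound'' is not a valid step: from $\exof{\obj(\wilde X(t))}\leq\min\obj+2\sqrt{\depth\noisevar/(Kt)}+C/t$ with an unspecified constant $C>0$ you cannot conclude \eqref{eq:rate-mean-opt}, not even for large $t$, because the claimed bound has an exact constant $2$ and no slack of order $t^{-1/2}$. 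The only slack you actually have is the one you computed from the exact integral, $c\noisevar/(2Kt)=\tfrac{1}{2t}\sqrt{\depth\noisevar/K}$, and that absorbs the remainder only if its constant is at most $\tfrac12\sqrt{\depth\noisevar/K}$ \textendash\ something you never establish.

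The correct way to close this is to open up the proof of \cref{thm:conv-var} and identify the $\bigoh(1/t)$ term explicitly: by \eqref{eq:rate-Fench1} and \eqref{eq:rate-temp1} it equals $\lyap(0)/t-\depth/(t\temp(0))$ with $\lyap(0)=\temp(0)^{-1}\fench(\sol,\temp(0)Y(0))$. Under the initialization $Y(0)=0$ (the same convention that underlies the constant $\depth$ in \eqref{eq:rate-det0}), one has $\fench(\sol,0)=h(\sol)-h(\mirror(0))\leq\depth$, so this residual is nonpositive and the clean bound $\min\obj+2\sqrt{\depth\noisevar/(Kt)}$ follows with no absorption argument needed. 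For an arbitrary initial condition $y_{0}$ the residual $\bigl[\fench(\sol,\temp(0)y_{0})-\depth\bigr]/(t\temp(0))$ can exceed the available $\bigoh(1/t)$ slack, and the bound as stated can fail for moderate $t$; so the initialization (or an equivalent control of $\fench(\sol,\temp(0)y_{0})$) is genuinely part of the proof, not bookkeeping. The AM\textendash GM digression about optimality of $a=\sqrt{\depth K/\noisevar}$ is fine but does nothing toward this point.
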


Compared to \eqref{eq:rate-det}, \cref{cor:rate-opt} indicates a drop in convergence speed from $\bigoh(1/t)$ to $\bigoh(1/\sqrt{t})$.
This is due to the Itô correction term $\noisevar/(2Kt) \int_{0}^{t}\temp(s) \dd s$ in \eqref{eq:rate-mean}:
balancing this second-order error against the noise-free bound $\depth/(t\temp(t))$ imposes a $\Theta(1/\sqrt{t})$ schedule for $\temp(t)$ \textendash\ otherwise, one term would be asymptotically slower than the other.
In this regard, \eqref{eq:rate-mean-opt} is reminiscent of the well-known $\bigoh(1/\sqrt{t})$ bounds derived in \cite[Section 2.3]{NJLS09} and \cite[Section~6]{Nes09} for the dual averaging method \eqref{eq:MD-discrete} in stochastic environments.
As discussed in \cite{KM17}, the drop in performance from $\bigoh(1/t)$ to $\bigoh(1/\sqrt{t})$ in the discrete-time case stems from the gap between continuous and discrete time:
specifically, the discretization of the continuous-time dynamics introduces a second-order Taylor term which slows down convergence.
In the case of \eqref{eq:SMD}, the second-order error that appears is not due to discretization, but to the (second-order) Itô correction which has a similar effect.

\section{Discussion}
\label{sec:discussion}

In this last section, we discuss some applications and extensions of our analysis so far.

\subsection{The traffic assignment problem: a case study}
\label{sec:traffic}

We begin with an application of our results to traffic assignment, a key problem in transportation and network science that concerns the optimal selection of paths between origins and destinations in traffic networks.
Referring to \cite{BMW56,BG92} for a detailed discussion, the core incarnation of the problem is as follows:
First, let $\graph = (\vertices,\edges)$ be a directed multi-graph with vertex set $\vertices$ and edge set $\edges$.
Assume further that there is an \ac{OD} pair $(\source,\sink)\in\vertices\times\vertices$ sending $\rate$ units of traffic from $\source$ to $\sink$ via a set of paths $\route\in\routes$ (that is, a set of simple edge chains joining $\source$ to $\sink$ in $\graph$ in the usual way).%
\footnote{The extension of the model to networks with multiple \ac{OD} pairs requires more elaborate notation, but is otherwise straightforward;
for an atomic, nonsplittable variant, see \cite{BM17}.}
The set of feasible \emph{routing flows} $\flow = (\flow_{\route})_{\route\in\routes}$ in the network is then defined as
\begin{equation}
\label{eq:flows}
\txs
\flows
	= \rate\,\simplex(\routes)
	= \setdef[\big]{(\flow_{\route})_{\route\in\routes}}{\text{$\flow_{\route}\geq0$ and $\sum_{\route\in\routes} \flow_{\route} = \rate$}}.
\end{equation}

Given a routing flow $\flow\in\flows$, the \emph{load} on edge $\edge\in\edges$ is $\load_{\edge} = \sum_{\route\ni\edge} \flow_{\route}$ and the \emph{delay} experienced by an infinitesimal traffic element traversing edge $\edge$ is $\cost_{\edge}(\load_{\edge})$, where $\cost_{\edge}\from\R_{+}\to\R_{+}$ is a nondecreasing convex \emph{cost function} (often a polynomial with positive coefficients).
Then, the delay along path $\route\in\routes$ is given by
\begin{equation}
\label{eq:cost-route}
\cost_{\route}(\flow)
	\equiv \sum_{\edge\in\route} \cost_{\edge}(\load_{\edge}),
\end{equation}
and the average delay in the network will be
\begin{equation}
\Cost(\flow)
	= \sum_{\route\in\routes} \flow_{\route} \cost_{\route}(\flow)
	= \sum_{\route\in\routes} \sum_{\edge\in\edges} \flow_{\route} \cost_{\edge}(\load_{\edge})
	= \sum_{\edge\in\edges} \load_{\edge} \cost_{\edge}(\load_{\edge}).
\end{equation}

In this setting,
solving the traffic assignment problem means finding a socially optimum routing flow $\sol\in\argmin_{\flow\in\flows}\Cost(\flow)$.
Assuming that the controlling \ac{OD} pair updates its routing flow at each $t\geq0$, \cref{thm:conv-det} shows that an optimum flow can be attained in an online manner by following the dynamics \eqref{eq:MD}.
More precisely, if we introduce the \emph{marginal cost}
\begin{equation}
\label{eq:cost-marginal}
\tilde\cost_{\edge}(\load_{\edge})
	= (\load_{\edge} \cost_{\edge}(\load_{\edge}))'
	= \cost_{\edge}(\load_{\edge}) + \load_{\edge} \cost_{\edge}'(\load_{\edge})
\end{equation}
and its path-based analogue $\tilde\cost_{\route}(\flow) = \sum_{\edge\in\route} \tilde\cost_{\edge}(\load_{\edge})$, a simple differentiation yields
\begin{equation}
\label{eq:cost-gradient}
\frac{\pd\Cost}{\pd \flow_{\route}}
	= \sum_{\edge\in\route} \tilde\cost_{\edge}(\load_{\edge})
	= \tilde\cost_{\route}(\flow).
\end{equation}
Thus, the dynamics \eqref{eq:MD} take the form
\begin{equation}
\label{eq:MD-traffic}
\begin{aligned}
\dot y_{\route}
	&= -\tilde\cost_{\route}(\flow),
	\\
\flow
	&= \mirror(\temp y),
\end{aligned}
\end{equation}
and, assuming $\cost$ and $h$ are sufficiently regular,%
\footnote{For instance, this is so if $\cost_{\edge}$ is polynomial and $h$ is the entropic regularizer of \cref{ex:logit}.}
\cref{thm:conv-det} shows that every solution $x(t)$ of \eqref{eq:MD-traffic} converges to an optimum routing flow $\sol\in\argmin\Cost$.

Now, if the marginal cost of each edge is only observable up to a random error, the scoring step of \eqref{eq:MD-traffic} takes the form
\begin{equation}
\label{eq:SMD-traffic}
\begin{aligned}
dY_{\route}
	&= - \sum_{\edge\in\route} \bracks{\tilde\cost_{\edge}(\load_{\edge}) \dd t + \noisevol_{\edge} \dd W_{\edge}}
	= - \tilde\cost_{\route}(X) \dd t + \dd\noise_{\route},
	\\
X
	&= \mirror(\temp Y),
\end{aligned}
\end{equation}
where $d\noise_{\route} = -\sum_{\edge\in\route} \noisevol_{\edge} \dd W_{\edge}$ and $\noisevol_{\edge}$ is assumed constant (for simplicity).
An easy calculation then shows that the infinitesimal covariance matrix $\covmat$ of $\noise$ is given by
\begin{flalign}
\covmat_{\route\routealt}
	= \sum_{\edge,\edgealt\in\edges} \noisevol_{\edge} \noisevol_{\edgealt} \delta_{\edge\edgealt} \one(\edge\in\route) \one(\edgealt\in\routealt)
	= \sum_{\edge\in\route\cap\routealt} \noisevol_{\edge}^{2}
\end{flalign}
i.e. stochastic fluctuations across two different paths $\route,\routealt\in\routes$ are correlated over their common edges.
This provides an important example where different components of the noise process $\noise$ are inherently correlated \textendash\ here, due to the underlying graph $\graph$.


\begin{figure}[t]
\centering
\footnotesize
\subfigure
[Continental US network topology.]
{\includegraphics[width=.48\textwidth]{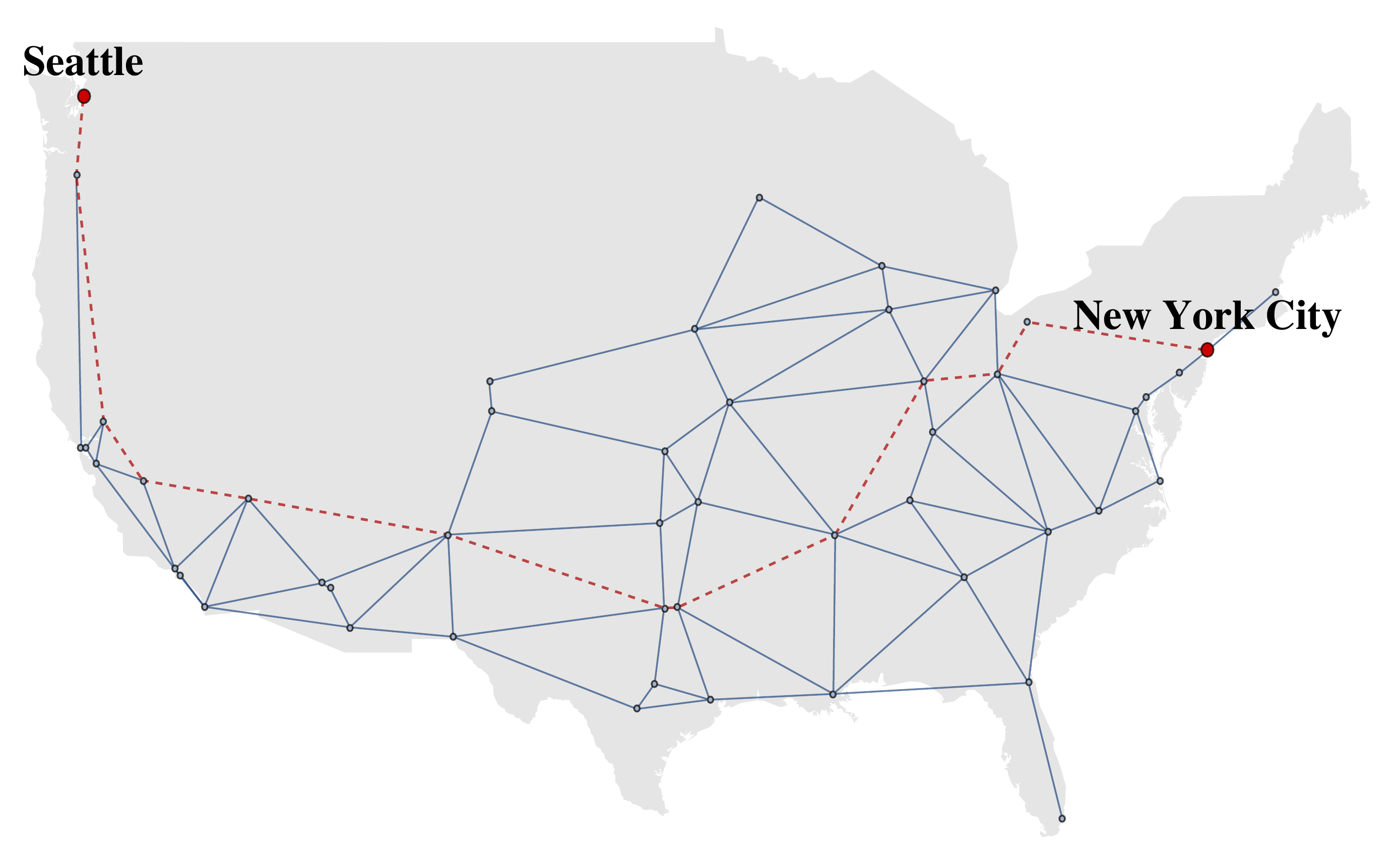}
\label{fig:network-US}}
\hfill
\subfigure
[Convergence of the ergodic average $\bar X(t)$.]
{\includegraphics[width=.48\textwidth]{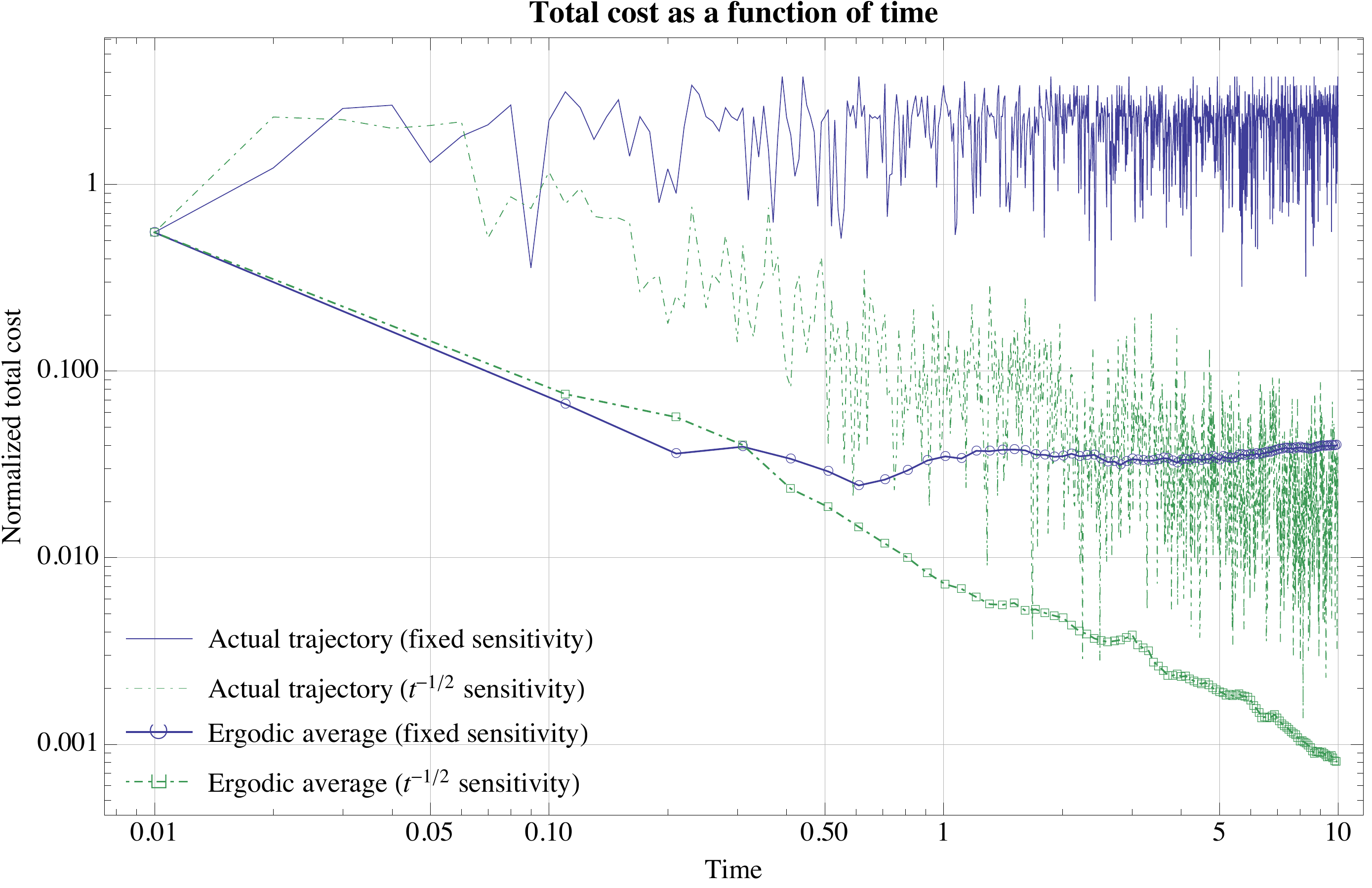}
\label{fig:network-convergence}}
\caption{Evolution of the dynamics \eqref{eq:SMD} in the traffic assignment problem.
\cref{fig:network-US} shows the underlying fiber network for the $50$ largest continental US cities.
In \cref{fig:network-convergence},
we provide a log-log plot of the normalized total cost $\Cost_{0}(x) = \Cost(x) - \min\Cost$ under \eqref{eq:SMD} with logit choice (\cref{ex:logit}).
When run with a fixed sensitivity, $X(t)$ meanders around without converging (solid blue line) and even the time-averaged process $\bar X(t) = t^{-1}\int_{0}^{t} X(s) \dd s$ fails to converge (blue line with circle markers).
If run with a $t^{-1/2}$ sensitivity schedule, $X(t)$ gets closer to the optimum (dashed green line) and its time-average follows a power law (dashed green line with square markers).%
}
\label{fig:network}
\end{figure}


\cref{fig:network} shows the evolution of the dynamics \eqref{eq:MD-traffic} in a data network consisting of the $50$ largest continental US cities with noise volatility $\noisevol_{\edge} = 0.25$ for all $\edge\in\edges$ and affine cost functions of the form $\cost_{\edge}(\load_{\edge}) = a_{\edge}\load_{\edge} + b_{\edge}$ (both $a_{\edge}$ and $b_{\edge}$ drawn uniformly between $0$ and $1$).
The stochastic system \eqref{eq:SMD} was integrated numerically following a standard Euler\textendash Maruyama discretization scheme \cite{KE92} run for $N=10^{3}$ iterations with a step-size of $\delta=10^{-2}$.
Then, in \cref{fig:network-convergence}, we plotted the normalized total cost $\Cost_{0}(x) = \Cost(x) - \min\Cost$ in log-log scale:
in tune with \cref{thm:conv-var}, we see that if \eqref{eq:SMD} is run with a decreasing sensitivity parameter, the ergodic average $\bar X(t) = t^{-1}\int_{0}^{t} X(s) \dd s$ enjoys a power law convergence rate (corresponding to a straight line in log-log scale), even though the unrectified process $X(t)$ fails to converge altogether.

\subsection{Links with \acl{HR} gradient flows}
\label{sec:HR}

In this last section, we briefly examine some links between \eqref{eq:SMD} and the literature on \acl{HR} gradient flows \cite{BT03,ABB04,ABRT04}.
To begin with, when $h$ is steep and $\feas$ has non\-empty (topological) interior, the differential theory of Legendre transformations \cite[Chapter~26]{Roc70} shows that the mirror map $\mirror=\nabla h^{\ast}$ is a homeomorphism between $\dual = \dspace$ and $\intfeas = \dom\subd h$.
In this case, the system \eqref{eq:MD} induces a semiflow on $\intfeas$ via the dynamics
\begin{equation}
\label{eq:preHR}
\dot x
	= \frac{d}{dt} \mirror(y)
	= \nabla\mirror(y) \cdot \dot y
	= \nabla(\nabla h^{\ast}(y)) \cdot \payv(\mirror(y))
	= -\hess(h^{\ast}(y)) \cdot \nabla\obj(x).
\end{equation}
By Legendre's identity, we also have $\hess(h^{\ast}(y)) = \hess(h(\mirror(y)))^{-1}$ for all $y\in\dual$, so \eqref{eq:preHR} leads to the \acdef{HR} dynamics
\begin{equation}
\label{eq:HD}
\tag{HD}
\dot x
	= -H(x)^{-1} \cdot \nabla\obj(x),
\end{equation}
where $H(x) \equiv \hess(h(x))$ denotes the Hessian of $h$ evaluated at $x=\mirror(y)$.

As such, a natural question that arises is whether this equivalence between \eqref{eq:HD} and \eqref{eq:MD} carries over to the stochastic regime analyzed here.
To address this issue, assume first that the gradient input to \eqref{eq:HD} is perturbed by some random noise function $\epsilon(t)$ as in \eqref{eq:noise-Langevin},
viz.
\begin{equation}
\label{eq:HD-Langevin}
\dot x
	= H(x)^{-1}\cdot(-\nabla\obj(x) + \epsilon(t)).
\end{equation}
Then, writing out \eqref{eq:HD-Langevin} as a proper (Itô) \acl{SDE}, we get the stochastic \acl{HR} dynamics
\begin{equation}
\label{eq:SHD}
\tag{SHD}
dX
	= -H(X)^{-1}\cdot\nabla(\obj(X)) \dd t
	+ H(X)^{-1}\cdot d\noise,
\end{equation}
with $\noise(t)$ defined as in \eqref{eq:noise}.
On the other hand,
if $h$ is sufficiently smooth,
Itô's formula shows that the primal dynamics generated by \eqref{eq:SMD} on $\feas$ are given by
\begin{equation}
\label{eq:SMD-primal}
\tag{\ref*{eq:SMD}-P}
dX
	= \nabla(\mirror(Y)) \cdot \payv(X) \dd t
	+ \nabla(\mirror(Y)) \cdot d\noise
	+ \frac{1}{2} \covmat(X)\cdot\hess(\mirror(Y)) \dd t,
\end{equation}
with the last term corresponding to the second-order Itô correction induced by the nonlinearity of $\mirror$ (we have also taken $\temp=1$ for simplicity).

Comparing these two systems,
we see that the first two terms of \eqref{eq:SMD-primal} correspond precisely to the drift and diffusion coefficients of \eqref{eq:SHD}.
However, the Itô correction term $\frac{1}{2} \covmat(X)\cdot\hess(\mirror(Y)) \dd t$ (which involves the \emph{third} derivatives of $h^{\ast}$) has no equivalent in \eqref{eq:SHD}, meaning that \eqref{eq:SHD} and \eqref{eq:SMD-primal} \emph{do not coincide in general} \textendash\ that is, unless the mirror map $\mirror\from\dual\to\feas$ happens to be linear.

To illustrate this, take the linear objective $\obj(x) = x$ over $\feas=[0,1]$ and consider the dynamics generated by the entropic penalty function $h(x) = x\log x + (1-x) \log(1-x)$ with induced mirror map $\mirror(y) = e^{y}/(1+e^{y})$.
Then, \eqref{eq:SHD} becomes
\begin{equation}
\label{eq:SHD-x}
dX
	= -X(1-X) \left[ dt - \noisevol \dd W \right],
\end{equation}
while, after a routine application of Itô's lemma, \eqref{eq:SMD} gives
\begin{equation}
\label{eq:SMD-x}
dX
	= -X(1-X) \left[ dt - \noisevol\dd W \right]
	+ \frac{1}{2} X(1-X)(1-2X) \noisevol^{2} \dd t.
\end{equation}
We thus see that the primal dynamics \eqref{eq:SHD-x} and \eqref{eq:SMD-x} differ by the Itô correction term $\frac{1}{2}X(1-X)(1-2X) \dd t$.
Accordingly, the dynamics' behavior with respect to the minimizer $\sol=0$ of $\obj$ is expected to be different as well.

Indeed, the score process $Y(t)$ of \eqref{eq:SMD} becomes $Y(t) = Y(0) - t + \noisevol W(t) \to-\infty$ \as, implying in turn that $X(t)\to\sol$ under \eqref{eq:SMD-x}.
On the other hand, under \eqref{eq:SHD-x}, it can be shown that $X(t)$ converges to $\argmax\obj$ with high probability if $\noisevol$ is large enough.
To see this, let $G(x) = \log x - \log(1-x)$, so $G(X(t)) \to -\infty$ if $X(t)\to0^{+}$ and $G(X(t))\to+\infty$ if $X(t)\to 1^{-}$.
Itô's lemma then yields
\begin{equation}
\label{eq:dG}
dG
	= G'(X) \dd X + \frac{1}{2} (dX)^{2}
	= - \dd t + \noisevol\dd W + (X-1/2)\noisevol^{2} \dd t.
\end{equation}
From \eqref{eq:dG}, it is intuitively obvious (and can be shown rigorously) that the drift of \eqref{eq:dG} remains uniformly positive with probability arbitrarily close to $1$ if $X(0)>1/2$ and $\noisevol$ is large.%
\footnote{For a formal argument along these lines, see \cite[Theorem 3.3.3]{MV16}.}
In turn, this implies that $G(X(t))\to\infty$, i.e. \eqref{eq:SHD-x} converges with high probability to $\argmax\obj$ instead of $\argmin\obj$!

The above shows that the \acl{HR} system \eqref{eq:HD} is more vulnerable to noise compared to \eqref{eq:MD}.
Intuitively, this failure is due to the fact that \eqref{eq:HD} lacks an inherent ``averaging'' mechanism capable of dissipating the noise in the long run \textendash\ in \eqref{eq:MD}, this role is played by the direct aggregation of gradient steps up to time $t$.
Given the link between \acl{HR} dynamics and the replicator dynamics of evolutionary game theory \cite{HS98,ABB04}, this is also reminiscent of the different long-run behavior of the replicator dynamics with aggregate shocks \cite{Imh05} and the dynamics of stochastically perturbed exponential learning \cite{MM10}.
We intend to explore these relations at depth in a future paper.

\appendix

\section{Mirror maps and the Fenchel coupling}
\label{app:Fenchel}

In this appendix, we collect some basic properties of mirror maps and the Fenchel coupling.
We begin with a structural property of the inverse images of $\mirror$:

\begin{lemma}
\label{lem:inv-cone}
If $\mirror(y) = x$, then $\mirror(y+\payv) = x$ for all $\payv\in\pcone(x)$.
\end{lemma}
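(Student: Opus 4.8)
The plan is to reduce the claim to a one-line computation with subdifferentials. By \cref{prop:mirror}, the identity $\mirror(y) = x$ is equivalent to $y \in \subd h(x)$, and likewise $\mirror(y+\payv) = x$ is equivalent to $y + \payv \in \subd h(x)$ (recall that $h$ is extended to all of $\vecspace$ by setting $h \equiv +\infty$ outside $\feas$). Thus the lemma is exactly the statement that the subdifferential $\subd h(x)$ is invariant under translation by elements of the polar cone $\pcone(x)$, i.e. $\subd h(x) + \pcone(x) \subseteq \subd h(x)$.

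To prove this inclusion, I would take the subgradient inequality $h(x') \ge h(x) + \braket{y}{x' - x}$, which holds for every $x' \in \vecspace$ since $y \in \subd h(x)$, and argue that adding $\braket{\payv}{x' - x}$ to the right-hand side can only weaken it. Indeed, for $x' \in \feas$ the displacement $x' - x$ is a feasible direction at $x$ and hence lies in the tangent cone $\tcone(x)$, so $\braket{\payv}{x' - x} \le 0$ by the very definition of the polar cone $\pcone(x)$; and for $x' \notin \feas$ the inequality $h(x') = +\infty \ge \dotsb$ is vacuously true. Consequently $h(x') \ge h(x) + \braket{y + \payv}{x' - x}$ for all $x' \in \vecspace$, which says precisely that $y + \payv \in \subd h(x)$; one more application of \cref{prop:mirror} then gives $\mirror(y + \payv) = x$, as desired.

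I do not expect any serious obstacle here: the only point that needs a word of justification is the membership $x' - x \in \tcone(x)$ for $x' \in \feas$, which is immediate from the definition of the tangent cone (the ray from $x$ through $x'$ meets $\feas$ at the second point $x'$). Conceptually, the lemma is just the precise expression of the intuition that once $\mirror$ has ``locked onto'' the point $x$, perturbing the dual argument $y$ in any direction that is outward-normal (in the polar sense) to $\feas$ at $x$ does not move the primal output — a fact that will be used repeatedly in the boundary/corner analysis of the later sections.
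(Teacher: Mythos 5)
Your proof is correct and follows essentially the same route as the paper: both reduce the claim via \cref{prop:mirror} to showing $y+\payv\in\subd h(x)$, and then add $\braket{\payv}{x'-x}\leq 0$ (valid since $x'-x\in\tcone(x)$ for $x'\in\feas$ and $\payv\in\pcone(x)$) to the subgradient inequality. Your extra remarks about $x'\notin\feas$ being vacuous and about $x'-x\in\tcone(x)$ are just slightly more explicit versions of steps the paper leaves implicit.
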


\begin{proof}
By \cref{prop:mirror}, it suffices to show that $y+\payv \in \pd h(x)$ for all $\payv\in\pcone(x)$.
However, since $\payv\in\pcone(x)$, we also have $\braket{\payv}{x' - x} \leq 0$ for all $x'\in\feas$, and hence
\begin{equation}
h(x')
	\geq h(x) + \temp\braket{y}{x' - x}
	\geq h(x) + \temp\braket{y + \payv}{x' - x},
\end{equation}
where the first inequality follows from the fact that $y \in \subd h(x)$.
The above shows that $y+\payv\in \pd h(x)$, so $\mirror(y+\payv) = x$, as claimed.
\end{proof}

The following technical comparison result is also useful in our analysis:

\begin{lemma}
\label{lem:Fench-polar}
If $y_{2} - y_{1} \in\pcone(\base)$, we have $\fench(\base,y_{1}) \geq \fench(\base,y_{2})$ and
\begin{equation}
\label{eq:Fench-polar}
\dnorm{y_{2} - y_{1}}
	\geq K \norm{\feas} \left[ \sqrt{1 + 2\delta/(K\norm{\feas}^{2})} - 1 \right],
\end{equation}
where $\delta = \fench(\base,y_{1}) - \fench(\base,y_{2})$.
\end{lemma}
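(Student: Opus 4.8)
The plan is to exploit the descent inequality \eqref{eq:Fench-diff} from \cref{prop:Fenchel}, specialized to the direction $y_{2} - y_{1} \in \pcone(\base)$. Writing $\base = \mirror(y_{1})$ is not available in general (indeed $\mirror(y_{1})$ need not equal $\base$), so instead I would apply \eqref{eq:Fench-diff} with the roles $y = y_{1}$, $y' = y_{2}$ and keep $\mirror(y_{1})$ abstract; the key observation is that $\braket{y_{2} - y_{1}}{\mirror(y_{1}) - \base} \leq 0$ because $y_{2} - y_{1} \in \pcone(\base)$ and $\mirror(y_{1}) - \base \in \tcone(\base)$ (as $\mirror(y_{1}) \in \feas$ and $\tcone(\base)$ contains all feasible directions at $\base$). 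Hence \eqref{eq:Fench-diff} collapses to
\begin{equation*}
\fench(\base, y_{2}) \leq \fench(\base, y_{1}) + \tfrac{1}{2K} \dnorm{y_{2} - y_{1}}^{2},
\end{equation*}
which immediately gives the monotonicity claim $\fench(\base, y_{1}) \geq \fench(\base, y_{2})$ once we also establish that $\delta \geq 0$ — but $\delta \geq 0$ is exactly this rearranged inequality, so the first assertion is essentially a one-line consequence of \cref{prop:Fenchel}(c).

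For the quantitative bound \eqref{eq:Fench-polar}, the above already yields $\delta \leq \tfrac{1}{2K}\dnorm{y_{2}-y_{1}}^{2}$, i.e. $\dnorm{y_{2}-y_{1}} \geq \sqrt{2K\delta}$, but that is weaker than and of a different shape from what is claimed. The stated bound is the positive root of a quadratic in $t = \dnorm{y_{2}-y_{1}}$ of the form $\tfrac{1}{2K} t^{2} + \norm{\feas}\, t \geq \delta$ — equivalently $t \geq K\norm{\feas}\big[\sqrt{1 + 2\delta/(K\norm{\feas}^{2})} - 1\big]$ by the quadratic formula. So the task reduces to proving the sharper inequality
\begin{equation*}
\delta = \fench(\base, y_{1}) - \fench(\base, y_{2}) \leq \norm{\feas}\,\dnorm{y_{2}-y_{1}} + \tfrac{1}{2K}\dnorm{y_{2}-y_{1}}^{2}.
\end{equation*}
To get this I would redo the chain-rule / integral estimate more carefully rather than invoking \eqref{eq:Fench-diff} as a black box: parametrize $y(s) = y_{1} + s(y_{2}-y_{1})$ for $s \in [0,1]$, note $\tfrac{d}{ds}\fench(\base, y(s)) = \braket{y_{2}-y_{1}}{\nabla h^{\ast}(y(s)) - \base} = \braket{y_{2}-y_{1}}{\mirror(y(s)) - \base}$ using \cref{prop:mirror}(2), and bound this integrand. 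The polar-cone hypothesis makes $\braket{y_{2}-y_{1}}{\mirror(y(s)) - \base}$ nonpositive only when $\mirror(y(s)) \in \feas$, which it always is, so in fact $\tfrac{d}{ds}\fench(\base,y(s)) \leq 0$ throughout and $\delta = -\int_{0}^{1}\tfrac{d}{ds}\fench(\base,y(s))\,ds = \int_{0}^{1} \braket{y_{1}-y_{2}}{\mirror(y(s)) - \base}\,ds \leq \dnorm{y_{2}-y_{1}} \int_{0}^{1}\norm{\mirror(y(s)) - \base}\,ds \leq \dnorm{y_{2}-y_{1}}\,\norm{\feas}$ by Cauchy–Schwarz and the diameter bound. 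This already gives $\delta \leq \norm{\feas}\dnorm{y_{2}-y_{1}}$, which implies \eqref{eq:Fench-polar} (it is even stronger, since dropping the $\tfrac{1}{2K}t^2$ term only enlarges the right-hand side of the quadratic inequality); I would then keep the looser quadratic form as stated for consistency with how the bound is used later.

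The main obstacle I anticipate is bookkeeping around the one-sided derivative and differentiability of $h^{\ast}$ at points where $\mirror(y(s))$ lies on $\bd(\feas)$ with $h$ steep — but \cref{prop:mirror}(2) guarantees $h^{\ast}$ is differentiable on all of $\dual$ with $\nabla h^{\ast} = \mirror$, so this is not actually an issue and the integral representation of $\fench(\base, \cdot)$ along the segment is legitimate. A secondary point worth double-checking is the direction of the polar-cone inclusion (whether $y_2 - y_1 \in \pcone(\base)$ should push $\fench$ down or up): since $\pcone(\base)$ consists of $y$ with $\braket{y}{z} \leq 0$ for all $z \in \tcone(\base)$, and $\mirror(y(s)) - \base$ is such a $z$, the sign works out to make $\fench(\base, \cdot)$ nonincreasing along the segment, consistent with the claimed inequality $\fench(\base, y_1) \geq \fench(\base, y_2)$. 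I would present the argument in the segment-integration form since it yields both conclusions simultaneously and makes the monotonicity transparent.
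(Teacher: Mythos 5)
Your segment-integration argument is sound and, for the monotonicity part, it is exactly the paper's proof: set $g(s)=\fench(\base,y_{1}+s(y_{2}-y_{1}))$, use $\nabla h^{\ast}=\mirror$ (\cref{prop:mirror}) to get $g'(s)=\braket{y_{2}-y_{1}}{\mirror(y_{1}+s(y_{2}-y_{1}))-\base}\leq 0$, since $y_{2}-y_{1}\in\pcone(\base)$ and $\mirror(\cdot)-\base\in\tcone(\base)$. Where you genuinely diverge is the quantitative part. The paper gets \eqref{eq:Fench-polar} from a single application of \eqref{eq:Fench-diff} (arranged so the pairing term is bounded by $\norm{\feas}\,\dnorm{y_{2}-y_{1}}$), giving $\delta\leq\norm{\feas}\,\omega+\tfrac{1}{2K}\omega^{2}$ with $\omega=\dnorm{y_{2}-y_{1}}$, and then solves the quadratic inequality; this does not use the polar-cone hypothesis at all in that step. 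You instead integrate $-g'$ over $[0,1]$, bound the integrand by the dual-norm pairing and the diameter, and obtain the sharper linear estimate $\delta\leq\norm{\feas}\,\dnorm{y_{2}-y_{1}}$, which (as you verify via $u+1\geq\sqrt{1+2u}$) implies the stated bound. So your route buys a stronger inequality and avoids \eqref{eq:Fench-diff} entirely, at the cost of using the polar-cone hypothesis along the whole segment; the paper's route keeps the bound in precisely the quadratic form that is later fed into the escape estimate of \cref{lem:escape}.

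One caveat: the shortcut in your first paragraph does not establish monotonicity. From $\fench(\base,y_{2})\leq\fench(\base,y_{1})+\tfrac{1}{2K}\dnorm{y_{2}-y_{1}}^{2}$, rearranging yields only $\delta\geq-\tfrac{1}{2K}\dnorm{y_{2}-y_{1}}^{2}$, not $\delta\geq 0$; the remark that ``$\delta\geq0$ is exactly this rearranged inequality'' is circular. This does not damage the proof you ultimately propose, because the sign of $g'$ along the segment gives $\fench(\base,y_{1})\geq\fench(\base,y_{2})$ directly — simply drop the first-paragraph derivation and present the integration argument alone.
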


\begin{proof}
Let $\payv = y_{2} - y_{1}$ and set $g(t) = \fench(\base,y_{1}+t\payv)$, $t\in[0,1]$.
Differentiating yields $g'(t) = \braket{\payv}{\mirror(y_{1}+t\payv) - \base} \leq 0$ for all $t$ because $\payv\in\pcone(\base)$ and $\mirror(y_{1}+t\payv) - \base\in\tcone(\base)$.
We thus get $\fench(\base,y_{2}) = \fench(\base,y_{1}+\payv) \leq \fench(\base,y_{1})$, as claimed.

For our second assertion, \eqref{eq:Fench-diff} readily yields
\begin{flalign}
\label{eq:Fench-quad-bound}
\fench(\base,y_{2}) - \fench(\base,y_{1})
	&\leq \braket{y_{2} - y_{1}}{\mirror(y) - \base} + \frac{1}{2K} \dnorm{y_{2} - y_{1}}^{2}
	\notag\\
	&\leq \dfeas\:\dnorm{y_{2} - y_{1}} + \frac{1}{2K} \dnorm{y_{2} - y_{1}}^{2},
\end{flalign}
and, after rearranging, we get $\omega^{2} + 2K\dfeas \omega - 2K\delta \geq 0$, where $\omega = \dnorm{y_{2} - y_{1}} \geq 0$.
The roots of this inequality are $\omega_{\pm} = -K\dfeas \pm \sqrt{K^{2} \dfeas^{2} + 2K\delta}$, so $\omega_{-} < 0 \leq \omega_{+}$.
This implies that \eqref{eq:Fench-quad-bound} only holds if $\omega \geq \omega_{+}$, so \eqref{eq:Fench-polar} follows.
\end{proof}

Our next result describes the evolution of the $\temp$-deflated Fenchel coupling $\lyap(t) = \temp^{-1} \fench(\sol,\temp y(t))$ under \eqref{eq:MD}:

\begin{lemma}
\label{lem:dFench-det}
Fix some $\sol\in\feas$.
Then, under \eqref{eq:MD}, we have
\begin{equation}
\label{eq:dFench-det}
\dot\lyap(t)
	= \braket{\payv(x(t))}{x(t) - \sol}.
\end{equation}
Consequently, $\lyap(t)$ is nonincreasing for all $\sol\in\argmin\obj$.
\end{lemma}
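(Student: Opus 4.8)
The plan is to differentiate $\lyap(t) = \temp^{-1}\fench(\sol,\temp y(t))$ directly, using the explicit expression \eqref{eq:Fenchel} for the Fenchel coupling together with the differentiability of the convex conjugate $h^{\ast}$ supplied by \cref{prop:mirror}. Expanding, $\fench(\sol,\temp y) = h(\sol) + h^{\ast}(\temp y) - \braket{\temp y}{\sol}$, so the term $h(\sol)$ is constant in $t$ and only the last two summands contribute to the time derivative. By \cref{prop:mirror}, $h^{\ast}$ is differentiable on $\dual$ with $\nabla h^{\ast} = \mirror$, and since $\mirror$ is $(1/K)$-Lipschitz, $\nabla h^{\ast}$ is Lipschitz; moreover $t \mapsto y(t)$ is $C^{1}$ because it is a classical solution of the well-posed system \eqref{eq:MD} (whose driving field is continuous). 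Hence the chain rule applies to the composition $t \mapsto h^{\ast}(\temp y(t))$.

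Carrying out the computation, I would write
$\tfrac{d}{dt}\fench(\sol,\temp y(t)) = \braket{\nabla h^{\ast}(\temp y(t))}{\temp\dot y(t)} - \temp\braket{\dot y(t)}{\sol} = \temp\braket{\dot y(t)}{\mirror(\temp y(t)) - \sol}$,
and then substitute the two defining relations of \eqref{eq:MD}, namely $x(t) = \mirror(\temp y(t))$ and $\dot y(t) = \payv(x(t))$, to obtain $\tfrac{d}{dt}\fench(\sol,\temp y(t)) = \temp\braket{\payv(x(t))}{x(t) - \sol}$. Dividing by $\temp$ gives \eqref{eq:dFench-det}. Note that this first identity holds for every fixed $\sol\in\feas$, as claimed.

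For the monotonicity assertion, I would recall that $\payv(x) = -\nabla\obj(x)$, so the right-hand side of \eqref{eq:dFench-det} equals $\braket{\nabla\obj(x(t))}{\sol - x(t)}$. Convexity of $\obj$ yields $\braket{\nabla\obj(x(t))}{\sol - x(t)} \leq \obj(\sol) - \obj(x(t))$, and if $\sol\in\argmin\obj$ then $\obj(\sol) \leq \obj(x(t))$, whence $\dot\lyap(t) \leq 0$ for all $t\geq0$; thus $\lyap$ is nonincreasing along solutions of \eqref{eq:MD}.

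The only delicate point — more a matter of bookkeeping than a genuine obstacle — is the justification of the chain rule for $t\mapsto h^{\ast}(\temp y(t))$, which requires precisely the differentiability of $h^{\ast}$ with the gradient identity $\nabla h^{\ast} = \mirror$ from \cref{prop:mirror} together with the $C^{1}$ regularity of the curve $y(\cdot)$; both ingredients are already established. Everything else reduces to a one-line calculation.
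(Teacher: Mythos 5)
Your proposal is correct and follows essentially the same route as the paper: differentiate the $\temp$-deflated Fenchel coupling using $\nabla h^{\ast} = \mirror$ from \cref{prop:mirror}, substitute the defining relations of \eqref{eq:MD}, and invoke convexity of $\obj$ (i.e. $\braket{\payv(x)}{x-\sol} \leq \obj(\sol)-\obj(x) \leq 0$ for $\sol\in\argmin\obj$) for the monotonicity claim. Your extra care in justifying the chain rule via the Lipschitz gradient of $h^{\ast}$ and the $C^{1}$ regularity of $y(\cdot)$ is a harmless elaboration of what the paper leaves implicit.
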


\begin{proof}
By the definition \eqref{eq:Fench-temp} of the $\temp$-deflated Fenchel coupling and \cref{prop:mirror}, we have
\begin{equation}
\dot\lyap(t)
	= \temp^{-1} \left[ \braket{\temp\dot y}{\nabla h^{\ast}(\temp y)} \right] - \braket{\dot y}{\sol}
	= \braket{\payv(x)}{x - \sol},
\end{equation}
as claimed.
As for our second claim, simply note that $\braket{\payv(x)}{x - \sol} \leq f(\sol) - f(x) \leq 0$ for all $\sol\in\argmin\obj$.
\end{proof}

We now extend \cref{lem:dFench-det} to the stochastic dynamics \eqref{eq:SMD} with a variable sensitivity parameter $\temp\equiv\temp(t)$:

\begin{lemma}
\label{lem:dFench-stoch}
Fix some $\sol\in\feas$.
Then, for all $t \geq t_{0} \geq 0$, we have
\begin{subequations}
\label{eq:dFench-stoch}
\begin{flalign}
\lyap(t) - \lyap(t_{0})
	&\label{eq:dFench-drift}
	\leq \int_{t_{0}}^{t} \braket{\payv(X(s))}{X(s) - \sol} \dd s
	\vphantom{\sum_{i=1}^{n}}
	\\
	&\label{eq:dFench-temp}
	- \int_{t_{0}}^{t} \frac{\dot\temp(s)}{\temp(s)^{2}} [h(\sol) - h(X(s))] \dd s
	\\
	&\label{eq:dFench-Ito}
	+ \frac{1}{2K} \int_{t_{0}}^{t} \temp(s) \trof{\covmat(X(s),s)} \dd s
	\\
	&\label{eq:dFench-noise}
	+ \sum_{i=1}^{n} \int_{t_{0}}^{t} (X_{i}(s) - \sol_{i}) \dd Z_{i}(s).
\end{flalign}
\end{subequations}
\end{lemma}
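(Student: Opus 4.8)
The plan is to derive \eqref{eq:dFench-stoch} by applying Itô's lemma to the process $s \mapsto \lyap(s) = \temp(s)^{-1}\fench(\sol,y_s)$, where $y_s \equiv \temp(s)\,Y(s)$ is the $\temp$-deflated score and $X(s) = \mirror(y_s)$. From \eqref{eq:SMD} and \eqref{eq:temp}, this scaled score satisfies $dy_s = \tfrac{\dot\temp(s)}{\temp(s)}\,y_s\,\dd s + \temp(s)\,\payv(X(s))\,\dd s + \temp(s)\,d\noise(s)$, with quadratic covariation $d[y_i,y_j]_s = \temp(s)^2\,\covmat_{ij}(X(s),s)\,\dd s$ by \eqref{eq:variation}.

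First I would establish the relevant ``Itô inequality'' for the convex, $C^{1,1}$ map $y \mapsto \fench(\sol,y) = h(\sol) + h^{\ast}(y) - \braket{y}{\sol}$, whose gradient $y \mapsto \mirror(y) - \sol$ is $(1/K)$-Lipschitz by \cref{prop:mirror}. Applying the descent-type bound \eqref{eq:Fench-diff} along a refining sequence of partitions of $[t_0,t]$ and passing to the limit --- the Riemann sums $\sum_k \braket{\mirror(y_{s_k}) - \sol}{y_{s_{k+1}} - y_{s_k}}$ converging to the Itô integral $\int_{t_0}^{t}\braket{\mirror(y_s) - \sol}{dy_s}$ (the integrand is bounded, since $X$ is $\feas$-valued and $\feas$ is compact), and $\sum_k \tfrac{1}{2K}\dnorm{y_{s_{k+1}} - y_{s_k}}^2$ being controlled in the limit by $\tfrac{1}{2K}\int_{t_0}^{t}\temp(s)^2\trof{\covmat(X(s),s)}\,\dd s$ --- yields
\begin{equation*}
\fench(\sol,y_t) - \fench(\sol,y_{t_0})
	\leq \int_{t_0}^{t}\braket{\mirror(y_s) - \sol}{dy_s}
	+ \frac{1}{2K}\int_{t_0}^{t}\temp(s)^2\trof{\covmat(X(s),s)}\,\dd s.
\end{equation*}

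Next I would substitute $dy_s$ into the first integral and eliminate the spurious factor $y_s$ using Fenchel's equality $\braket{y_s}{X(s)} = h^{\ast}(y_s) + h(X(s))$, which gives $\braket{X(s) - \sol}{y_s} = \fench(\sol,y_s) - h(\sol) + h(X(s))$; the drift then breaks into the gradient term $\braket{\payv(X(s))}{X(s) - \sol}$, the sensitivity term $\tfrac{\dot\temp(s)}{\temp(s)}[\fench(\sol,y_s) - h(\sol) + h(X(s))]$, and the Itô correction, while the martingale part is $\temp(s)\braket{X(s) - \sol}{d\noise(s)}$. Finally, since $\temp^{-1}$ has bounded variation, integration by parts gives $d\lyap(s) = \temp(s)^{-1}\,d\big[\fench(\sol,y_s)\big] - \tfrac{\dot\temp(s)}{\temp(s)^2}\fench(\sol,y_s)\,\dd s$; multiplying the displayed inequality by $\temp(s)^{-1} > 0$ (which preserves it) and adding the last term, the two $\fench(\sol,y_s)$ contributions cancel, the sensitivity term collapses to $\tfrac{\dot\temp(s)}{\temp(s)^2}[h(X(s)) - h(\sol)]$, and the martingale part becomes $\braket{X(s) - \sol}{d\noise(s)} = \sum_{i=1}^{n}(X_i(s) - \sol_i)\,d\noise_i(s)$. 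Collecting the four resulting terms is exactly \eqref{eq:dFench-stoch}.

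The main obstacle is the second step: $h^{\ast}$ is only $C^{1,1}$ (its gradient $\mirror$ is Lipschitz but need not be differentiable), so the classical Itô formula does not apply verbatim and the second-order term must be produced as an inequality rather than an identity --- which is precisely what \eqref{eq:Fench-diff} is built to deliver, though some care is needed in passing from the fixed-increment bound to the time-integrated statement. An equivalent route is to mollify $h^{\ast}$, apply the ordinary Itô formula to the smooth convex approximants $h^{\ast}_{\delta}$ (for which $\hess(h^{\ast}_{\delta}) \mleq K^{-1}I$, so the Hessian term is bounded by $\tfrac{1}{2K}\temp^2\trof\covmat$), and let $\delta \to 0$. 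Everything after the Itô inequality --- the change of variables, the use of Fenchel's equality, and the integration-by-parts cancellation --- is bookkeeping.
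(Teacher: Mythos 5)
Your proposal is correct and takes essentially the paper's route: the paper likewise reduces the lemma to a weak Itô inequality for the $C^{1,1}$ convex map $y\mapsto\fench(\sol,y)$, proved by mollification in \cref{prop:Ito} with $\nabla\fench(\sol,y)=\mirror(y)-\sol$ and the second-order term controlled by $1/(2K)$ via \cref{prop:mirror} \textendash\ exactly your mollification alternative, and morally the same as your partition argument built on \eqref{eq:Fench-diff}. The only difference is presentational: the paper's one-line proof leaves the time-varying $\temp$ bookkeeping (the finite-variation product rule for $\temp^{-1}$ and Fenchel's equality producing the $h(\sol)-h(X)$ term in \eqref{eq:dFench-temp}) implicit, whereas you spell it out, correctly.
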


\begin{proof}
By \cref{prop:mirror}, we have $\nabla\fench(\sol,y) = \nabla h^{\ast}(y) - \sol = \mirror(y) - \sol$ for all $y\in\dual$.
Thus, given that $\mirror = \nabla h^{\ast}$ is $(1/K)$-Lipschitz continuous (again by \cref{prop:mirror}), our result follows from \cref{prop:Ito} (see also \cref{rem:Ito}).
\end{proof}

\section{Convergence analysis}
\label{app:proofs}

In this appendix, we prove the convergence results of \cref{sec:prelims,sec:results}.

\subsection{Deterministic analysis}
\label{app:det}

We begin with the convergence properties of the deterministic dynamics \eqref{eq:MD}:

\begin{proofof}{Proof of \cref{thm:conv-det}}
For all $\sol\in\argmin\obj$, \cref{lem:dFench-det} gives
\begin{flalign}
\label{eq:Fbound-det1}
\lyap(t) - \lyap(0)
	&= \int_{0}^{t} \braket{\payv(x(s))}{x(s) - \sol} \dd s
	\leq t [ \min\obj - \bar\obj(t) ].
\end{flalign}
A simple rearrangement yields $\bar\obj(t) - \min\obj \leq \lyap(0)/t$, so the bound for $\obj_{\min}(t)$ follows trivially.
As for the specific rate $\depth/t$, it suffices to note that $\fench(\sol,0) = h(\sol) + h^{\ast}(0) = h(\sol) - h(\mirror(0)) \leq \max\setdef{h(x') - h(x)}{x,x'\in\feas}$.

For our second assertion, let $\olimit$ be an $\omega$-limit of $x(t)$ and assume that $\olimit\notin\argmin\obj$.
Since $\argmin\obj$ is closed, there exists a neighborhood $U$ of $\olimit$ in $\feas$ such that $\braket{\payv(x)}{x - \sol} \leq -a$ for some $a>0$ and for all $\sol\in\argmin\obj$.
Furthermore, since $\olimit$ is an $\omega$-limit of $x(t)$, there exists an increasing sequence of times $t_{k}\uparrow\infty$ such that $x(t_{k})\in U$ for all $k$.
Then, for all $\tau>0$, \cref{prop:mirror} gives
\begin{flalign}
\label{eq:xdiff}
\norm{x(t_{k}+\tau) - x(t_{k})}
	&= \norm{\mirror(\temp y(t_{k}+\tau)) - \mirror(\temp y(t_{k}))}
	\leq \frac{\temp}{K} \dnorm{y(t_{k}+\tau) - y(t_{k})}
	\notag\\
	&\leq \frac{\temp}{K} \int_{t_{k}}^{t_{k}+\tau} \dnorm{\payv(x(s))} \dd s
	\leq \frac{\temp\tau}{K} \max_{x\in\feas} \dnorm{\payv(x)}.
\end{flalign}

Given that the bound \eqref{eq:xdiff} does not depend on $k$, there exists some sufficiently small $\delta>0$ such that $x(t_{k} + \tau) \in U$ for all $\tau\in[0,\delta]$, $k\in\N$ (so we also have $\braket{\payv(x(t_{k} + \tau))}{x(t_{k}+\tau) - \sol} \leq -a$).
Therefore, given that $\braket{\payv(x)}{x - \sol} \leq 0$ for all $x\in\feas$, $\sol\in\argmin\obj$, we get
\begin{flalign}
\label{eq:Fbound-det2}
\lyap(t_{k}+\delta) - \lyap(0)
	&\leq \sum_{j=1}^{k} \int_{t_{j}}^{t_{j} + \delta} \braket{\payv(x(s))}{x(s) - \sol} \dd s
	\leq - ak\delta,
\end{flalign}
showing that $\liminf_{t\to\infty} \fench(\sol,\temp y(t)) = -\infty$, a contradiction.
Since $x(t)$ admits at least one $\omega$-limit, we conclude that $x(t)$ converges to $\argmin\obj$.

Assuming $\sol\in\argmin\obj$ is an $\omega$-limit of $x(t)$, we have $x(t_{k}')\to\sol$ for some sequence of times $t_{k}'\uparrow\infty$.
By \eqref{eq:Fench-reg}, it follows that $\lyap(t_{k}')\to0$ and hence, with $\lyap(t)$ nonincreasing, that $\lyap(t)\to0$.
Since $x(t)$ admits at least one $\omega$-limit (by the compactness of $\feas$), we conclude that $\lim_{t\to\infty} x(t) = \sol$, as claimed.
\end{proofof}

\subsection{The vanishing noise limit}
\label{app:smallnoise}

We proceed with the proof of our ``vanishing noise'' results, namely \cref{prop:smallnoise,thm:smallnoise}:

\begin{proofof}{Proof of \cref{prop:smallnoise}}
Arguing by contradiction, assume that $X(t)$ remains a bounded distance away from $\argmin\obj$ for large $t$ with positive probability.
This implies that there exists some $a>0$ and a (random) $t_{0}$ such that
\begin{equation}
\braket{\payv(X(t))}{X(t) - \sol} \leq - a
	\quad
	\text{for all $t\geq t_{0}$},
\end{equation}
again with positive probability.
Then, fixing some $\sol\in\argmin\obj$ and taking the associated Fenchel coupling $\lyap(t) = \temp^{-1} \fench(\sol,\temp Y(t))$, \cref{lem:dFench-stoch} gives
\begin{flalign}
\label{eq:Fench-bound}
\lyap(t) - \lyap(t_{0})
	&\leq \int_{t_{0}}^{t} \braket{\payv(X(s))}{X(s) - \sol} \dd s
	\notag\\
	&+ \frac{1}{2K} \int_{t_{0}}^{t} \trof{\covmat(X(s),s)} \dd s
	+ \sum_{i=1}^{n} \int_{t_{0}}^{t} (X_{i}(s) - \sol) \dd Z_{i}(s)
	\notag\\
	&\leq - a(t - t_{0})
	+  \frac{1}{2K} \int_{t_{0}}^{t} \trof{\covmat(X(s),s)} \dd s
	+ \snoise(t),
\end{flalign}
where
$\snoise(t)$ denotes the martingale term $\sum_{i=1}^{n} \int_{t_{0}}^{t} (X_{i}(s) - \sol_{i}) \dd Z_{i}(s)$.
Since $\norm{X(s) - \sol} \leq \norm{\feas} < \infty$, \cref{lem:Wbound} in \cref{app:stoch} shows that $\snoise(t)/t \to 0$ \as.
Moreover, we also have
$\lim_{t\to\infty} t^{-1} \int_{0}^{t} \trof{\covmat(X(s),s)} \dd s = \lim_{t\to\infty} \fnorm{\noisevol(X(t),t)}^{2} = 0$
(by the vanishing noise assumption and de l'Hôpital's rule), so the last two terms in \eqref{eq:Fench-bound} are both sublinear in $t$.
We thus obtain $\lyap(t) \to -\infty$ with positive probability, a contradiction which establishes our claim.
\end{proofof}

We are now in a position to prove \cref{thm:smallnoise} under the additional assumption $\sup_{x\in\feas} \fnorm{\noisevol(x,t)} = o(1/\sqrt{\log t})$:

\begin{proofof}{Proof of \cref{thm:smallnoise}}
Without loss of generality, assume that $\temp=1$;
otherwise, simply replace $h$ by $\temp^{-1}h$ in the definition of \eqref{eq:SMD}.
Also, for simplicity, we only prove the case where $\obj$ admits a unique minimizer $\sol\in\feas$;
the general argument is similar (but more cumbersome to write down), so we omit it.

To begin, fix some $\eps>0$ and let $U_{\eps} = \setdef{x=\mirror(y)}{\fench(\sol,y) < \eps}$.
Our first claim is that there exists a time $T\equiv T(\eps)$ such that $\fench(\sol,\semiflow_{T}(y)) \leq \max\{\eps,\fench(\sol,y) - \eps\}$ for all $y\in\dual$.
Indeed, by \eqref{eq:Fench-reg} and the continuity of $\payv(x)$, there exists some $a \equiv a(\eps) > 0$ such that
\begin{equation}
\braket{\payv(x)}{x - \sol}
	\leq -a
	\quad
	\text{for all $x\notin U_{\eps}$}.
\end{equation}
Consequently, if $\tau_{y} = \inf\setdef{t>0}{\mirror(\semiflow_{t}(y)) \in U_{\eps}}$ is the first time at which an orbit of \eqref{eq:MD} hits $U_{\eps}$, \cref{lem:dFench-det} gives
\begin{equation}
\label{eq:Fench-bound1}
\fench(\sol,\semiflow_{t}(y)) - \fench(\sol,y)
	= \int_{0}^{t} \braket{\payv(x(s))}{x(s) - \sol} \dd s
	\leq -at
	\quad
	\text{for all $t\leq\tau_{y}$}.
\end{equation}
In view of this, set $T=\eps/a$ and consider the following cases:
\begin{enumerate}
\item
If $T\leq\tau_{y}$, \eqref{eq:Fench-bound1} gives $\fench(\sol,\semiflow_{T}(y)) \leq \fench(\sol,y) - \eps$.
\item
If $T>\tau_{y}$, we have $\fench(\sol,\semiflow_{T}(y)) \leq \fench(\sol,\semiflow_{\tau_{y}}(y)) = \eps$ (recall here that $\fench(\sol,\semiflow_{t}(y))$ is weakly decreasing in $t$).
\end{enumerate}
In both cases we have $\fench(\sol,\semiflow_{T}(y)) \leq \max\{\eps,\fench(\sol,y) - \eps\}$, as claimed.

Now, let $(Y(t))_{t\geq0}$ be a solution of \eqref{eq:SMD};
we then claim that $Y(t)$ is \as an \acl{APT} of $\eqref{eq:MD}$ in the sense of \cref{def:APT}.
Indeed, by Proposition 4.6 in \cite{Ben99}, it suffices to show that $\int_{0}^{\infty} e^{-c/\covmax(t)} \dd t < \infty$
where $\covmax(t) = \sup_{x\in\feas} \trof{\covmat(x,t)}$ and $c>0$ is arbitrary.
However, by assumption
\begin{equation}
\covmat_{\max}(t)
	= \sup_{x\in\feas} \fnorm{\noisevol(x,t)}^{2}
	= \phi(t)/\log t
\end{equation}
for some $\phi(t)$ with $\lim_{t\to\infty}\phi(t) = 0$.
Therefore,
\begin{equation}
e^{-c/\covmat_{\max}(t)}
	= \parens*{e^{\log t}}^{-c/\phi(t)}
	= t^{-c/\phi(t)}
	= \bigoh(t^{-\beta})
	\quad
	\text{for all $\beta>1$},
\end{equation}
and our assertion follows.


To proceed, fix a solution $Y(t)$ of \eqref{eq:SMD} which is an \ac{APT} of \eqref{eq:MD}.
Moreover, with notation as in \cref{def:APT},  let $\delta \equiv \delta(\eps)$ be such that $\delta\norm{\feas} + \delta^{2}/(2K) \leq \eps$
and
choose some (random) $t_{0}\equiv t_{0}(\eps)$ such that $\sup_{0\leq h\leq T} \dnorm{Y(t+h) - \semiflow_{h}(Y(t))} \leq \delta$ for all $t\geq t_{0}$.
Then, for all $t\geq t_{0}$, we get
\begin{flalign}
\label{eq:Fench-3eps}
\fench(\sol,Y(t+h))
	&\leq \fench(\sol,\semiflow_{h}(Y(t)))
	+ \braket{Y(t+h) - \semiflow_{h}(Y(t))}{\mirror(\semiflow_{h}(Y(t))) - \sol}
	\notag\\
	&+ \frac{1}{2K} \dnorm{Y(t+h) - \semiflow_{h}(Y(t))}^{2}
	\notag\\
	&\leq \fench(\sol,\semiflow_{h}(Y(t)))
	+ \delta \norm{\feas}
	+ \frac{\delta^{2}}{2K}
	\leq \fench(\sol,\semiflow_{h}(Y(t))) + \eps,
\end{flalign}
where, in the first line, we used the second-order Taylor estimate for the Fenchel coupling derived in \cref{prop:Fenchel} (cf.~\cref{app:Fenchel}).

By \cref{prop:smallnoise}, there exists some $T_{0}\geq t_{0}$ such that $\fench(\sol,Y(T_{0})) \leq 2\eps$ \as, implying that $\fench(\sol,Y(T_{0})) \leq 2\eps$ for some $T_{0}\geq t_{0}$.
Hence, by \eqref{eq:Fench-3eps}, we get
\begin{equation}
\fench(\sol,Y(T_{0}+h))
	\leq \fench(\sol,\semiflow_{h}(Y(T_{0}))) + \eps
	\leq \fench(\sol,Y(T_{0})) + \eps
	\leq 3\eps
\end{equation}
for all $h\in[0,T]$.
However,
we also have $\fench(\sol,\semiflow_{T}(Y(T_{0}))) \leq \max\{\eps,\fench(\sol,Y(T_{0})) - \eps\} \leq \eps$, so $\fench(\sol,Y(T_{0}+T)) \leq \fench(\sol,\semiflow_{T}(Y(T_{0}))) + \eps \leq 2\eps$.
Therefore, repeating the above argument at $T_{0}+T$ (instead of $T_{0}$) and proceeding inductively, we get $\fench(\sol,Y(T_{0}+h)) \leq 3\eps$ for all $h\in[kT,(k+1)T]$, $k\in\N$.
With $\eps$ arbitrary, we conclude that $\fench(\sol,Y(t)) \to 0$, so $X(t)\to\sol$, as claimed.
\end{proofof}

\subsection{Long-run concentration around solution points}
\label{app:invariant}

We now turn to the ergodic properties of \eqref{eq:SMD} under persistent, nonvanishing noise:

\begin{proofof}{Proof of \cref{prop:hitting}}
Let $\lyap(t) \equiv \temp^{-1} \fench(\sol,\temp Y(t))$ denote the $\temp$-deflated Fenchel coupling between $\sol$ and $Y(t)$.
Then, by the growth bound \eqref{eq:dFench-stoch}, we get
\begin{flalign}
\label{eq:dFench-strong1}
\lyap(t) - \lyap(0)
	&\leq \int_{0}^{t} \braket{\payv(X(s))}{X(s) - \sol} \dd s
	+ \frac{1}{2K} \int_{0}^{t} \temp \trof{\covmat(X(s),s)} \dd s
	+ \snoise(t)
	\notag\\
	&\leq -\frac{\strong}{2} \int_{0}^{t} \norm{X(s) - \sol}^{2} \dd s
	+ \frac{\temp \noisevar t}{2K}
	+ \snoise(t),
\end{flalign}
where $\snoise(t) = \sum_{i=1}^{n} \int_{0}^{t} (X_{i}(s) - \sol_{i}) \dd Z_{i}(s)$
and we used the strong convexity bound \eqref{eq:strong} to write $\braket{\payv(x)}{x - \sol} \leq \obj(\sol) - \obj(x) \leq -\frac{1}{2} \strong\norm{x-\sol}^{2}$ in the second line.
Since $\lyap(t)\geq0$, the bound \eqref{eq:MSE-mean} follows by taking expectations,
exploiting the fact that $\snoise(t)$ has zero mean,
and rearranging.

Now, replacing $t$ by $\tau_{\delta}\wedge t$ in \eqref{eq:dFench-strong1}, we also get
\begin{flalign}
\exof{\lyap(\tau_{\delta}\wedge t)}
	&\leq \lyap(0)
	- \frac{\strong}{2} \exof*{\int_{0}^{\tau_{\delta}\wedge t} \norm{X(s) - \sol}^{2} \dd s}
	+ \frac{\temp\noisevar}{2K} \exof{\tau_{\delta}\wedge t}
	\notag\\
\label{eq:dFench-strong3}
	&\leq \lyap(0)
	+ \frac{\temp\noisevar - \strong K\delta^{2}}{2K} \exof{\tau_{\delta}\wedge t},
\end{flalign}
where we used the fact that $\norm{X(s) - \sol} \geq \delta$ for all $s\leq\tau_{\delta}$.
Since $\lyap \geq 0$, we conclude that $\exof{\tau_{\delta}\wedge t} \leq 2K\lyap(0)/(\strong K\delta^{2} - \temp\noisevar)$.
Our claim then follows by letting $t\to\infty$ (so $\tau_{\delta}\wedge t\to\tau_{\delta}$) and invoking the dominated convergence theorem.
Finally, the optimized bound \eqref{eq:hitting-bound-opt} is obtained by maximizing the denominator of \eqref{eq:hitting-bound}.
\end{proofof}

We are now in a position to estimate the occupation measure of $X(t)$:

\begin{proofof}{Proof of \cref{thm:invariant}}
We begin by introducing a transformed version of $Y(t)$ which is recurrent under \eqref{eq:SMD}.%
\footnote{Recall here that $Y(t)$ is \emph{recurrent} if there exists a compact set $\cpt$ such that $\probof{Y(t)\in\cpt \; \text{for some} \; t\geq0} = 1$ for every initial condition $y_{0}$ of $Y$ \cite{Kha12,Bha78}.
In our case, the set $\mirror^{-1}(\ball_{\delta})$ need not be compact, so the generating process $Y(t)$ need not be recurrent either.}
To that end, note first that $\mirror^{-1}(x)$ always contains a translate of the polar cone $\pcone(x)$ of $\feas$ at $x$ (cf.~\cref{lem:inv-cone});
in particular, if $\feas$ is not full-dimensional, $\mirror^{-1}(\ball_{\delta})$ contains a nonzero affine subspace of $\dual$.
To mod out this subspace, let $\subspace = \aff(\feas-\feas) \subseteq \vecspace$ denote the smallest subspace of $\vecspace$ that contains $\feas$ when translated to the origin (so $\feas$ may be considered as a convex body of $\subspace$).
Then, writing $\subdual\equiv\dsubspace$ for the dual space of $\subspace$, define the restriction map $\dincl\from\dual\to\subdual$ as
\begin{equation}
\label{eq:restriction}
\braket{\dincl(y)}{z}
	= \braket{y}{z}
	\quad
	\text{for all $z\in\subspace$}.
\end{equation}
We then have $\dincl(y) = 0$ whenever $y$ annihilates $\subspace$ (i.e. $\braket{y}{z} = 0$ for all $z\in\subspace$).%
\footnote{Of course, if $\feas$ has nonempty interior as a subset of $\vecspace$, we have $\subspace = \vecspace$ and $\dincl$ is the identity.}

Accordingly, in view of \cref{prop:hitting}, it stands to reason that the transformed process $\Psi(t) = \dincl(Y(t))$ is recurrent.
Indeed, from \cite[Proposition 3.1]{Bha78}, it suffices to show that
\begin{inparaenum}
[\itshape a\upshape)]
\item
$\Psi(t)$ is an Itô diffusion whose infinitesimal generator is uniformly elliptic;
and
\item
there exists some compact set $\subcpt\subseteq\subdual$ such that $\probof{\text{$\Psi(t)\in\subcpt$ for some $t\geq0$}} = 1$ for every initial condition $\psi_{0}\in\subdual$.
\end{inparaenum}
The rest of our proof is devoted to establishing these two requirements.

For the first, write $\dincl(y)$ in coordinates as $(\dincl(y))_{i} = \sum_{k=1}^{n} \Pi_{ik} y_{k}$.
Then, with $\Psi = \Pi\cdot Y$, we get
\begin{equation}
\label{eq:SMD-mod}
d\Psi_{i}
	= \sum_{k=1}^{n} \Pi_{ik} \, \parens*{ \payv_{k}(X) \dd t + \dd Z_{k} }.
\end{equation}
Moreover, define the ``restricted'' mirror map $\submirror\from\subdual\to\feas$ as
\begin{equation}
\label{eq:mirror-mod}
\submirror(w)
	= \argmax\nolimits_{x\in\feas}\{ \braket{w}{x} - h(x) \},
\end{equation}
where, in a slight abuse of notation, $\feas$ is treated as a subset of $\subspace$.
By definition, we have $\braket{y}{x} = \braket{\dincl(y)}{x}$ for all $x\in\feas$, so $\argmax\{\braket{y}{x} - h(x)\} = \argmax\{\braket{\dincl(y)}{x} - h(x)\}$ for all $y\in\dual$.
This shows that $X(t)$ can be expressed as $X(t) = \submirror(\temp \dincl(Y(t))) = \submirror(\temp \Psi(t))$, so \eqref{eq:SMD-mod} represents a regular Itô diffusion.

We now claim that the infinitesimal generator $\gen_{\Psi}$ of $\Psi$ is uniformly elliptic.
Indeed, the quadratic covariation of $\Psi$ is given by
\begin{equation}
d[\Psi_{i},\Psi_{j}]
	= \dd(\Pi Y)_{i} \dd(\Pi Y)_{j}
	= \sum_{k,\ell = 1}^{n} \Pi_{ik} \Pi_{j\ell} \covmat_{k\ell} \dd t
	= \parens*{\Pi \covmat \Pi^{\top}}_{ij} \dd t,
\end{equation}
where we used the definition \eqref{eq:variation} of $\covmat$ in the penultimate equality.
However, we also have
\(
\Pi\covmat\Pi^{\top}
	\mgeq \lambda \Pi\Pi^{\top}
	\mgeq \lambda \pi_{\min}^{2} I,
\)
where $\pi_{\min} > 0$ denotes the smallest singular value of $\Pi^{\top}$ (recall that $\dincl$ has full rank).
This shows that the principal symbol $\Pi\covmat\Pi^{\top}$ of $\gen_{\Psi}$ is uniformly positive-definite, so $\gen_{\Psi}$ is uniformly elliptic.

For the second component of our proof, assume without loss of generality that $\delta$ is sufficiently small so $\ball_{\delta}\subseteq\intfeas$
(obviously, this is possibly only if $\sol\in\intfeas$).
Momentarily viewing $\feas$ as a convex body of $\subspace$ (and $\ball_{\delta}$ as a ball in $\subspace$), Remark 6.2.3 in \cite{HUL01} implies that the set $\subcpt = \temp^{-1}\subd h(\ball_{\delta})$ is compact.%
\footnote{Strictly speaking, Remark 6.2.3 of \cite{HUL01} applies to convex functions that are defined on all of $\subspace$, but since this is a local property, it is trivial to extend it to our case.}
Then, by \cref{prop:hitting}, it follows that $\Psi(t)$ hits $\subcpt$ in finite time \as for every initial condition $\psi_{0} = \dincl(y_{0}) \in\subdual$.

Since the generator $\gen_{\Psi}$ of $\Psi$ is uniformly elliptic and $\subcpt$ is compact, Proposition~3.1 in \cite{Bha78} shows that $\Psi(t)$ is recurrent.
Hence, from standard results in the theory of Itô diffusions \cite[Theorem~4.4.1, Theorem~4.4.2 and Corollary~4.4.4]{Kha12}, we conclude that $\Psi(t)$ admits a unique invariant distribution $\nu$ which satisfies the law of large numbers
\begin{equation}
\lim_{t\to\infty} t^{-1} \int_{0}^{t} \phi(\Psi(s)) \dd s
	=\int_{\subdual} \phi \dd\nu,
\end{equation}
for every $\nu$-integrable function $\phi$ on $\subdual$.
We thus obtain
\begin{flalign}
\lim_{t\to\infty} \frac{1}{t} \int_{0}^{t} \one(X(s) \in \ball_{\delta}) \dd s
	&= \lim_{t\to\infty} \frac{1}{t} \int_{0}^{t} \one(\temp\Psi(s) \in \submirror^{-1}(\ball_{\delta})) \dd s
	\notag\\
	&= \int_{\subdual} \one_{\temp^{-1}\submirror^{-1}(\ball_{\delta})} \dd\nu
	= \nu(\temp^{-1} \submirror^{-1}(\ball_{\delta})),
\end{flalign}
i.e. $\occ_{t}(\ball_{\delta}) \to \nu(\temp^{-1}\submirror^{-1}(\ball_{\delta}))$ as $t\to\infty$ \as.
Similarly, given that the limit of $\occ_{t}$ is deterministic and finite, the mean square bound \eqref{eq:MSE-mean} also yields
\begin{flalign*}
1 - \nu(\temp^{-1}\submirror^{-1}(\ball_{\delta}))
	&= \lim_{t\to\infty} \frac{1}{t} \exof*{\int_{0}^{t} \one(X(s) \notin \ball_{\delta}) \dd s}
	\notag\\
	&\leq \lim_{t\to\infty} \frac{1}{t} \exof*{\int_{0}^{t} \frac{\norm{X(s) - \sol}^{2}}{\delta^{2}} \dd s}
	\notag\\
	&\leq \lim_{t\to\infty} \frac{1}{\delta^{2}} \bracks*{\frac{2\fench(\sol,\temp y_{0})}{\temp\strong t} + \frac{\temp\noisevar}{\strong K}}
	= \frac{\temp\noisevar}{\strong K\delta^{2}},
\end{flalign*}%
as was to be shown.
\end{proofof}

\subsection{Convergence to sharp solutions}
\label{app:sharp}

The proof of our convergence result for sharp solutions is fairly involved, so we encode it in a series of technical lemmas.
The first one shows that neighborhoods of sharp solutions are recurrent under \eqref{eq:SMD}:

\begin{lemma}
\label{lem:sharp-recurrence}
Fix $\delta>0$ and assume that $\obj$ admits a $\sharp$-sharp solution.
If \eqref{eq:SMD} is run with sensitivity parameter $\temp < 2\sharp\delta K/\noisevar$, there exists a \textpar{random} sequence of times $t_{n}\uparrow\infty$ such that $\norm{X(t_{n}) - \sol} < \delta$ for all $n$ \as.
\end{lemma}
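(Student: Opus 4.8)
The plan is to argue by contradiction, mirroring the proof of \cref{prop:smallnoise}. First I would suppose that, with positive probability, $X(t)$ eventually leaves the $\delta$-ball around the (necessarily unique) sharp minimizer $\sol$ for good; that is, there is a (random) time $t_{0}$ with $\norm{X(t) - \sol} \geq \delta$ for all $t \geq t_{0}$. Working on this event, I would take the $\temp$-deflated Fenchel coupling $\lyap(t) = \temp^{-1}\fench(\sol,\temp Y(t))$ relative to $\sol$ and invoke \cref{lem:dFench-stoch}; since $\temp$ is constant here, the sensitivity-drift term \eqref{eq:dFench-temp} vanishes, leaving
\begin{equation}
\lyap(t) - \lyap(t_{0})
	\leq \int_{t_{0}}^{t} \braket{\payv(X(s))}{X(s) - \sol} \dd s
	+ \frac{\temp}{2K} \int_{t_{0}}^{t} \trof{\covmat(X(s),s)} \dd s
	+ \snoise(t),
\end{equation}
where $\snoise(t) = \sum_{i=1}^{n}\int_{t_{0}}^{t}(X_{i}(s)-\sol_{i})\dd Z_{i}(s)$ is the martingale noise term.

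Next I would estimate the three terms on the right. For the drift, combining convexity of $\obj$ with the sharpness bound \eqref{eq:sharp} gives $\braket{\payv(x)}{x-\sol} = -\braket{\nabla\obj(x)}{x-\sol} \leq \obj(\sol) - \obj(x) \leq -\sharp\norm{x-\sol}$, so on the event under consideration $\braket{\payv(X(s))}{X(s)-\sol} \leq -\sharp\delta$ for every $s \geq t_{0}$, and the first integral is at most $-\sharp\delta\,(t-t_{0})$. For the Itô correction, the noise bound \eqref{eq:noisevar} yields $\trof{\covmat(X(s),s)} = \fnorm{\noisevol(X(s),s)}^{2} \leq \noisevar$, so the second integral is at most $\tfrac{\temp\noisevar}{2K}(t-t_{0})$. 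Finally, since $X$ stays in the compact set $\feas$, the integrand of $\snoise$ is bounded by $\norm{\feas}$ and (again using \eqref{eq:noisevar}) the quadratic variation of $\snoise$ grows at most linearly in $t$; hence \cref{lem:Wbound} in \cref{app:stoch} gives $\snoise(t)/t \to 0$ \as.

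Assembling these, I would obtain
\begin{equation}
\lyap(t)
	\leq \lyap(t_{0}) + \parens*{\frac{\temp\noisevar}{2K} - \sharp\delta}(t-t_{0}) + \snoise(t),
\end{equation}
and since the standing assumption $\temp < 2\sharp\delta K/\noisevar$ makes the coefficient $\tfrac{\temp\noisevar}{2K} - \sharp\delta$ strictly negative, dividing by $t$ and using $\snoise(t)/t \to 0$ forces $\lyap(t) \to -\infty$ on an event of positive probability — contradicting $\lyap \geq 0$. The one point requiring a little care, and the step I would flag as the main (and only mildly annoying) obstacle, is the measurability of the random threshold $t_{0}$: I would handle it in the usual way, decomposing the bad event into the countably many events $\{\norm{X(t)-\sol} \geq \delta\ \text{for all}\ t\geq m\}$, $m\in\N$, and running the estimate with the deterministic time $t_{0}=m$ on each (so the martingale LLN applies to the fixed process $\sum_{i}\int_{0}^{t}(X_{i}(s)-\sol_{i})\dd Z_{i}(s)$). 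Everything else is routine and entirely parallel to \cref{prop:smallnoise}.
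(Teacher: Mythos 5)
Your argument is correct and is essentially the paper's own proof: a contradiction via the $\temp$-deflated Fenchel coupling, with the drift bounded by $-\sharp\delta$ (you via convexity plus \eqref{eq:sharp} directly, the paper via \cref{lem:sharp} — equivalent), the Itô correction by $\temp\noisevar/(2K)$, and the martingale killed by \cref{lem:Wbound}, so that $\temp<2\sharp\delta K/\noisevar$ forces $\lyap(t)\to-\infty$ against $\lyap\geq0$. Your countable decomposition over deterministic times $m\in\N$ to handle the random $t_{0}$ is a sound (and slightly more careful) way of formalizing a step the paper leaves implicit.
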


\begin{proof}
Suppose there exists some (random) $t_{0}$ such that $\norm{X(t) - \sol} \geq \delta$ for all $t\geq t_{0}$.
Then, writing $\lyap(t) = \temp^{-1} \fench(\sol,\temp Y(t))$ for the $\temp$-deflated Fenchel coupling between $\sol$ and $Y(t)$, \cref{lem:dFench-stoch} yields
\begin{flalign}
\label{eq:dFench-sharp}
\lyap(t)
	&\leq \lyap(t_{0})
	+ \int_{t_{0}}^{t} \braket{\payv(X(s))}{X(s) - \sol} \dd s
	+ \frac{1}{2K} \int_{t_{0}}^{t} \temp \trof{\covmat(X(s),s)} \dd s
	+ \snoise(t)
	\notag\\
	& \leq \lyap(t_{0})
	- \bracks*{\sharp\delta - \frac{\temp\noisevar}{2K} - \frac{\snoise(t)}{t - t_{0}}} (t - t_{0}),
\end{flalign}
where we set $\snoise(t) = \sum_{i=1}^{n} \int_{t_{0}}^{t} (X_{i}(s) - \sol_{i}) \dd Z_{i}(s)$ in the first line
and we used \cref{lem:sharp} in the second.
Since $\snoise(t)/(t - t_{0}) \to 0$ by \cref{lem:Wbound} in \cref{app:stoch}, the bound \eqref{eq:dFench-sharp} yields $\lim_{t\to\infty} \lyap(t) = -\infty$ if $\temp\noisevar < 2\sharp\delta K$, a contradiction (recall that $\lyap(t)\geq0$ for all $t\geq0$).
This shows that $t_{0} = \infty$ \as, so there exists a sequence $t_{n}\uparrow\infty$ such that $\norm{X(t_{n}) - \sol} < \delta$ for all $n$.
\end{proof}

Our next result shows that the dual process $Y(t)$ keeps moving roughly along the direction of $\payv(\sol)$ with probability arbitrarily close to $1$ if $\temp$ is chosen small enough and $X(0)$ starts sufficiently close to $\sol$.

\begin{lemma}
\label{lem:escape}
Suppose that $\obj$ admits a sharp minimum $\sol\in\feas$, and let $\cone$ be a polyhedral cone such that $\payv(\sol)\in\intr(\cone)$ and $\cone\subseteq\intr(\pcone(\sol)) \cup\{0\}$.
Then,
for small enough $\temp,\eps,\delta>0$,
and for every initial condition $y_{0}\in\dual$ with $\fench(\sol, \temp y_{0})<\eps$,
there exists some $\ybase\in\dual$
such that
$\fench(\sol, \temp \ybase) = \eps + \delta$
and
\begin{equation}
\label{eq:escape-prob}
\probof{\text{$Y(t) \in \ybase + \cone$ for all $t\geq0$}}
	\geq 1 - e^{-\kappa\delta/(\temp\noisevar)},
\end{equation}
where $\kappa>0$ is a constant that depends only on $\cone$ and $\obj$.
\end{lemma}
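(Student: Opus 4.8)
The plan is to trap the dual trajectory inside a fixed translate $\ybase+\cone$ of $\cone$, using the monotonicity of the Fenchel coupling along polar directions (\cref{lem:Fench-polar}) to locate $\ybase$, and a Girsanov‑type (exponential supermartingale) hitting estimate to control the exit probability. \emph{Step 1 (locating $\ybase$).} Since $\payv(\sol)\in\intr(\cone)\subseteq\intr(\pcone(\sol))\cup\{0\}$, I would slide $\ybase$ backwards along the ray $s\mapsto y_{0}-s\payv(\sol)$. Because $s\payv(\sol)\in\pcone(\sol)$, \cref{lem:Fench-polar} shows $g(s)=\fench(\sol,\temp(y_{0}-s\payv(\sol)))$ is nondecreasing; differentiating $g$ and invoking \cref{lem:sharp} (so that $\braket{\payv(\sol)}{\mirror(\cdot)-\sol}<0$ away from $\sol$) confirms this, and evaluating $h^{\ast}$ at any fixed $x_{0}\in\feas\setminus\{\sol\}$ shows $g(s)\to+\infty$. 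As $g(0)=\fench(\sol,\temp y_{0})<\eps<\eps+\delta$, the intermediate value theorem yields $s^{\ast}>0$ with $g(s^{\ast})=\eps+\delta$; I set $\ybase=y_{0}-s^{\ast}\payv(\sol)$, so that $y_{0}-\ybase=s^{\ast}\payv(\sol)\in\cone$ (hence $y_{0}\in\ybase+\cone$, which is necessary for the probability statement to be non‑vacuous) and $\fench(\sol,\temp\ybase)=\eps+\delta$.

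\emph{Step 2 (confinement keeps the drift inward).} Write $\cone=\setdef{y}{\braket{y}{z_{k}}\geq0,\ k=1,\dots,m}$ with $z_{k}\neq0$; interiority of $\payv(\sol)$ gives $r>0$ with $\braket{\payv(\sol)}{z_{k}}\geq r\norm{z_{k}}$ for all $k$. Let $\tau$ be the first exit time of $Y$ from $\ybase+\cone$ and $\phi_{k}(t)=\braket{Y(t)-\ybase}{z_{k}}$, so $\{t<\tau\}=\{\phi_{k}(s)\geq0\text{ for all }k,\ s\leq t\}$. The key self‑bootstrapping observation: on $\{t<\tau\}$ we have $\temp(Y(t)-\ybase)\in\cone\subseteq\pcone(\sol)$, so \cref{lem:Fench-polar} gives $\fench(\sol,\temp Y(t))\leq\fench(\sol,\temp\ybase)=\eps+\delta$, whence $\norm{X(t)-\sol}^{2}\leq 2(\eps+\delta)/K$ by \eqref{eq:Fench-norm}. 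Choosing $\eps+\delta$ small enough that $L\sqrt{2(\eps+\delta)/K}\leq r/2$ (with $L$ the Lipschitz modulus of $\payv$), this forces $\dnorm{\payv(X(t))-\payv(\sol)}\leq r/2$ and hence $\braket{\payv(X(t))}{z_{k}}\geq(r/2)\norm{z_{k}}>0$ for every $k$ while inside the cone; meanwhile $d[\phi_{k}]=z_{k}^{\top}\covmat(X,\cdot)z_{k}\,dt\leq\noisevar\norm{z_{k}}^{2}\,dt$ by \eqref{eq:noisevar}.

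\emph{Step 3 (exit probability).} For each $k$ I would take $\theta_{k}=r/(\noisevar\norm{z_{k}})$ and check, via Itô's formula, that $\exp(-\theta_{k}\phi_{k}(t\wedge\tau))$ is a bounded supermartingale — the drift coefficient $-\theta_{k}(r/2)\norm{z_{k}}+\tfrac12\theta_{k}^{2}\noisevar\norm{z_{k}}^{2}$ vanishes for this choice. Optional stopping together with Fatou then give $\probof{\tau<\infty,\ \phi_{k}(\tau)=0}\leq e^{-\theta_{k}\phi_{k}(0)}$, and a union bound over the finitely many facets yields $\probof{\tau<\infty}\leq\sum_{k}e^{-\theta_{k}\phi_{k}(0)}$. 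It remains to lower‑bound the initial buffer: $\theta_{k}\phi_{k}(0)=r\,s^{\ast}\braket{\payv(\sol)}{z_{k}}/(\noisevar\norm{z_{k}})\geq r^{2}s^{\ast}/\noisevar$. Applying the second‑order Fenchel estimate \eqref{eq:Fench-diff} to $\fench(\sol,\temp\ybase)-\fench(\sol,\temp y_{0})>\delta$ (using $\norm{X(0)-\sol}\leq\sqrt{2\eps/K}$ from \eqref{eq:Fench-norm}) and solving the resulting quadratic in $\temp s^{\ast}$ gives $\temp s^{\ast}\gtrsim\sqrt{\eps+\delta}-\sqrt{\eps}\gtrsim\delta/\sqrt{\eps+\delta}$, the implicit constants depending only on $K$ and $\dnorm{\payv(\sol)}$. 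Hence $\theta_{k}\phi_{k}(0)\gtrsim\delta/(\temp\noisevar\sqrt{\eps+\delta})\geq\kappa_{0}\,\delta/(\temp\noisevar)$ once $\eps+\delta\leq1$, and for $\temp$ small enough that $\delta/(\temp\noisevar)$ exceeds $\kappa_{0}^{-1}\log m$, the $m$ facets are absorbed, giving $\probof{\tau=\infty}\geq1-e^{-\kappa\delta/(\temp\noisevar)}$ with $\kappa=\kappa_{0}/2$; all constants depend only on $\cone$, $\obj$ (and the regularizer).

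\emph{Main obstacle.} The genuine difficulty is the quantitative accounting in Step 3: the escape probability must decay exactly like $e^{-\kappa\delta/(\temp\noisevar)}$, so the cone‑buffer $\phi_{k}(0)$ has to be of order $\delta/\temp$, and this is extracted somewhat delicately from the \emph{second‑order} term in \eqref{eq:Fench-diff} rather than the linear one — it is precisely here that the smallness of $\eps+\delta$ (to keep $1/\sqrt{\eps+\delta}\gtrsim1$ and to keep $X$ near $\sol$ while $Y$ roams the cone) and of $\temp$ (to swallow the number of facets) is consumed. What makes everything close is the self‑bootstrap of Step 2: confinement of $Y$ to $\ybase+\cone$ automatically keeps $X$ near the sharp minimizer, hence keeps the drift of each $\phi_{k}$ strictly inward, which is exactly what powers the exponential supermartingale.
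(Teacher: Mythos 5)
Your proposal is correct, and its overall architecture is the same as the paper's: translate the cone to a base point $\ybase = y_{0} - s^{\ast}\payv(\sol)$ pinned at $\fench(\sol,\temp\ybase) = \eps+\delta$ via the monotonicity of the Fenchel coupling along $\pcone(\sol)$, extract a buffer of order $\delta/\temp$, show the drift pushes inward along each facet normal while $Y$ is confined (so that $X$ stays near the sharp minimizer), get a per-facet exponential exit bound, and absorb the facet count by taking $\temp$ small. Where you diverge is in the implementation of two steps. First, the per-facet estimate: the paper time-changes the martingale part via Dambis–Dubins–Schwarz and invokes the exact Girsanov crossing probability $e^{-2ab}$ for a Brownian motion against a line, whereas you run an exponential supermartingale $e^{-\theta_{k}\phi_{k}(t\wedge\tau)}$ with $\theta_{k}$ tuned to cancel the drift, plus optional stopping; this is an equivalent (arguably cleaner) route that handles the state-dependent coefficients directly and avoids the time-change bookkeeping. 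Second, your buffer bound comes from applying \eqref{eq:Fench-diff} directly and solving the quadratic in $\temp s^{\ast}$ (using $\norm{X(0)-\sol}\leq\sqrt{2\eps/K}$), which is exactly the computation packaged in the paper as \cref{lem:Fench-polar} (there with $\norm{\feas}$ in place of $\sqrt{2\eps/K}$, giving a $K$-free constant); likewise your ``self-bootstrap'' (confinement $\Rightarrow$ $\fench\leq\eps+\delta$ $\Rightarrow$ $X$ near $\sol$ $\Rightarrow$ inward drift $r/2$) is the same mechanism the paper encodes by pre-fixing the neighborhood $\nhd$ with constant $\sharp_{\nhd}$. Two items of bookkeeping to tidy up, neither affecting correctness: the bound $z_{k}^{\top}\covmat z_{k}\leq\noisevar\norm{z_{k}}_{2}^{2}$ involves the Euclidean norm, so a norm-equivalence constant (the paper's $\fradius$) must be carried into $\theta_{k}$ and hence into $\kappa$; and absorbing the $m$ facets into the exponent with $\kappa=\kappa_{0}/2$ requires $\delta/(\temp\noisevar)\geq 2\kappa_{0}^{-1}\log m$, not $\kappa_{0}^{-1}\log m$. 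Your $\kappa$ also depends on $K$ and the ambient norm, but this matches the paper's own constant (which depends on $\fradius$, $\norm{\feas}$, $\sharp_{\nhd}$), so it is consistent with the statement's intent.
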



\begin{figure}
\centering
\footnotesize

\colorlet{TangentColor}{DodgerBlue!40!MidnightBlue}
\colorlet{PolarColor}{FireBrick}

\begin{tikzpicture}
[>=stealth,
vecstyle/.style = {->, line width=.5pt},
edgestyle/.style={-, line width=.5pt},
nodestyle/.style={circle, fill=Black,inner sep = .75pt},
plotstyle/.style={color=DarkGreen!80!Cyan,thick}]

\def\radius{6}
\def\costhirty{0.8660256}
\def\cosfortyfive{0.7071068}
\def\cosseventyfive{0.2588190451}
\def\diff{0.77886966103678}
\def\veclength{.5}
\def\upangle{130}
\def\downangle{255}
\def\diffangle{60}
\def\midangle{.55*\upangle+.45*\downangle-180}
\def\conescale{.8}

\coordinate (base) at (-2,-1/3);
\coordinate (start) at (0.4,.2);
\coordinate (y1) at (0.592494, -0.019825);
\coordinate (y2) at (0.792766, 0.0429555);
\coordinate (y3) at (0.948162, .081878);
\coordinate (y4) at (1.39485, -0.090005);
\coordinate (y5) at (2.15562, 0.08761);

\draw [edgestyle] (base.center) -- ++(\upangle-90:\conescale*\radius);
\draw [edgestyle] (base.center) -- ++(\downangle+90:\conescale*\radius) node [right] {$\pcone(\sol)$};

\fill [PolarColor!5] (base.center) ++(\downangle+105:\conescale*\radius) -- (base.center) --++(\upangle-105:\conescale*\radius);
\draw [edgestyle, PolarColor] (base.center) -- ++(\upangle-105:\conescale*\radius) node [right] {$\cone$};
\draw [edgestyle, PolarColor] (base.center) -- ++(\downangle+105:\conescale*\radius);

\draw [vecstyle] (base.center) -- (\midangle:-.5) node [above, black] {$\quad\payv(\sol)$};

\draw
[densely dashed]
($(base) + (-.35,1.75)$)
node [above] {$\fench(\sol,\temp y) = \eps + \delta$}
.. controls ($(base) + (-.3,1)$)
and ($(base) + (-.25,.5)$)
.. (base)
.. controls ($(base) + (.25,-.5)$)
and ($(base) + (.75,-.75)$)
.. ($(base) + (1.25,-1)$);

\draw
[densely dashed]
($(start) + (-.5,2)$) node [above] {$\fench(\sol,\temp y) = \eps$} .. controls ($(start) + (-.5,0)$) and ($(start) + (-.5,0)$) .. ($(start) + (1,-1.25)$);

\node [nodestyle, label = left:$\ybase$] (base) at (base.north) {};
\node [nodestyle, label = above:$y_{0}$] (start) at (start) {};

\draw [decorate, decoration ={random steps, segment length=1.5,amplitude=1pt}]
(start.center) -- (y1) -- (y2) -- (y3) -- (y4) -- (y5) node [above] {$Y(t)$};

\end{tikzpicture}
\caption{The various sets in the proof of \cref{lem:escape}.}
\label{fig:cones}
\end{figure}
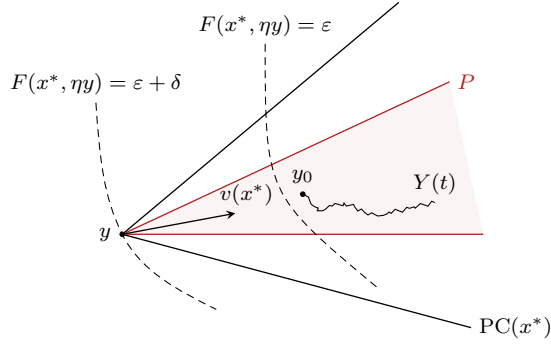


\begin{proof}
Let $\cone^{\perp} = \setdef{z\in\vecspace}{\braket{y}{z}\leq 0\; \text{for all}\; y\in\cone}$ denote the polar cone of $\cone$ and let $\genvecs = \braces{\genvec_{j}}_{j=1}^{d}$ be a basis for $\cone^{\perp}$ (recall that $\cone$ is assumed polyhedral).
Further,
fix a small compact neighborhood $\nhd$ of $\sol$ such that $\braket{\payv(x)}{z} \leq - \sharp_{\nhd} \norm{z}$ for some $\sharp_{\nhd}>0$ and all $x\in \nhd$, $z\in\cone^{\perp}$;%
\footnote{That such a $\sharp_{\nhd}$ exists is a consequence of the continuity of $\payv(x)$ and \cref{lem:sharp}.}
with a fair bit of hindsight, assume also that $\delta < K\norm{\feas}^{2}$ is sufficiently small so that $\mirror(\temp y)\in\nhd$ whenever $\fench(\sol,\temp y) \leq \eps + \delta$.
Finally, invoking \cref{lem:Fench-polar}, let $\ybase = y_{0} - c\payv(\sol)$ for some $c>0$ such that $\fench(\sol,\temp \ybase) = \eps + \delta$.
Then, \eqref{eq:Fench-polar} gives
\begin{equation}
\label{eq:dy}
\dnorm{y_{0} - \ybase}
	= c \dnorm{\payv(\sol)}
	\geq \frac{K\norm{\feas}}{\temp} \left[ \sqrt{1 + 2\delta/(K\norm{\feas}^{2})} - 1 \right]
	\geq \frac{\delta}{2\temp\norm{\feas}},
\end{equation}
where we used the fact that $\delta < K\norm{\feas}^{2}$ in the last inequality.

To proceed, set $\tau_{\cone} = \inf\setdef{t\geq0}{Y(t) \notin \ybase + \cone}$ and let $G_{\genvec}(t) = \braket{Y(t) - \ybase}{\genvec}$, so $\tau_{\cone} = \inf\setdef{t\geq0}{\text{$G_{\genvec}(t) > 0$ for some $\genvec\in\genvecs$}}$.
Then, for all $t \leq \tau_{\cone}$, we have
\begin{flalign}
\label{eq:Gz-bound}
G_{\genvec}(t)
	&= G_{\genvec}(0) + \int_{0}^{t} \braket{\payv(X(s))}{\genvec} \dd s + \snoise_{\genvec}(t)
	\leq -A\norm{\genvec} - B\norm{\genvec} t + \snoise_{\genvec}(t),
\end{flalign}
where we have set
$A = c \min_{\genvec'\in\genvecs} \abs{\braket{\payv(\sol)}{\genvec'}}$,
$B = \sharp_{\nhd}$,
and $\snoise_{\genvec}(t) = \braket{Z(t)}{\genvec}$.
Arguing as in the proof of \cref{lem:Wbound}, the Dambis\textendash Dubins\textendash Schwarz time-change theorem for martingales \cite[Theorem~3.4.6]{KS98} implies that there exists a standard Wiener process $W_{\genvec}(t)$ such that $\snoise_{\genvec}(t) = W_{\genvec}(\rho_{\genvec}(t))$, where $\rho_{\genvec}(t) = [\snoise_{\genvec}(t),\snoise_{\genvec}(t)]$ denotes the quadratic variation of $\snoise_{\genvec}$.
By \eqref{eq:Gz-bound}, this further implies that $G_{\genvec}(t) \leq 0$ whenever $W_{\genvec}(\rho_{\genvec}(t)) \leq A\norm{\genvec} + B\norm{\genvec}t$;
hence, $\tau_{\cone} = \infty$ whenever $W_{\genvec}(\rho_{\genvec}(t)) \leq A\norm{\genvec} + B\norm{\genvec} t$.

Moreover, note that
\begin{equation}
d\rho_{\genvec}
	= d\snoise_{\genvec}\cdot d\snoise_{\genvec}
	= \sum_{i,j=1}^{n} \covmat_{ij} \genvec_{i} \genvec_{j} \dd t,
\end{equation}
so $\rho_{\genvec}(t) \leq \noisevar\norm{\genvec}^{2}_{2} t \leq \fradius\noisevar\norm{\genvec}^{2} t$ for some constant $\fradius>0$ that depends only on the choice of primal norm $\norm{\cdot}$.
Hence, if a trajectory of $W_{\genvec}$ is such that $W_{\genvec}(t) \leq A\norm{\genvec} + \frac{B}{\fradius\norm{\genvec}\noisevar}t$ for all $t\geq0$, we also get
\begin{equation}
W_{\genvec}(\rho_{\genvec}(t))
	\leq A\norm{\genvec} + \frac{B}{\fradius\norm{\genvec}\noisevar} \rho(t)
	\leq A\norm{\genvec} + B\norm{\genvec} t
	\quad
	\text{for all $t\geq0$}.
\end{equation}
Therefore, to prove the lemma, it suffices to establish a suitable lower bound for the probability $\probof{W_{\genvec}(t) \leq A\norm{\genvec} + Bt / (\fradius\norm{\genvec}\noisevar)\;\text{for all $t\geq0$}}$.

To do so, let
\begin{equation}
\tau_{\cone}'
	= \inf\setdef*{t>0}{\text{$W_{\genvec}(t) = A\norm{\genvec} + \frac{B}{\fradius\norm{\genvec}\noisevar}t$ for some $\genvec\in\genvecs$}}
\end{equation}
and write $E_{\genvec}$ for the event ``$W_{\genvec}(t) \geq A\norm{\genvec} + Bt/(\fradius\norm{\genvec}\noisevar)$ for some finite $t\geq0$''.
By a standard application of Girsanov's theorem for Wiener processes with drift \cite[p.~197]{KS98}, we get $\probof{E_{\genvec}} = e^{-2AB/(\fradius\noisevar)}$ and hence
\begin{equation}
\probof{\tau_{\cone}'<\infty}
	= \probof*{\union\nolimits_{\genvec\in\genvecs}E_{\genvec}}
	\leq \insum_{\genvec\in\genvecs} \probof{E_{\genvec}}
	= \abs{\genvecs} e^{-2AB/(\fradius\noisevar)}.
\end{equation}
Now, from the bound \eqref{eq:dy} and the definition of $A$ and $B$, we have
\begin{equation}
\frac{AB}{\fradius}
	= \frac{c\sharp_{\nhd} \min_{\genvec'\in\genvecs} \abs{\braket{\payv(\sol)}{\genvec'}}}{\fradius}
	\geq \frac{\delta}{2\temp\norm{\feas}}
		\frac{\sharp_{\nhd} \min_{\genvec'\in\genvecs} \abs{\braket{\payv(\sol)}{\genvec'}}}{\fradius\dnorm{\payv(\sol)}}
	= \frac{\kappa\delta}{\temp},
\end{equation}
where we set $\kappa = \frac{\sharp_{\nhd} \min_{\genvec'\in\genvecs} \abs{\braket{\payv(\sol)}{\genvec'}}}{2\fradius\dnorm{\payv(\sol)} \norm{\feas}}$.
Backtracking then yields $\probof{\tau_{\cone} = \infty} \geq \probof{\tau_{\cone}' = \infty} \geq 1 - e^{-\kappa\delta/(\temp\noisevar)}$, provided that $\temp\leq \kappa\delta/ (\noisevar \log\abs{\genvecs})$.
Therefore, with $\probof{Y(t) \in \ybase+\cone \; \text{for all} \; t\geq 0} = \probof{\tau_{\cone} = \infty}$, our proof is complete.
\end{proof}

The final ingredient of our proof is that if $Y(t)$ moves deep within $\pcone(\sol)$, the induced trajectory $X(t) = \mirror(\temp Y(t))$ converges to $\sol$:

\begin{lemma}
\label{lem:conv-polar}
Let $(y_{n})_{n=1}^{\infty}$ be a sequence in $\dual$ such that $\braket{y_{n}}{z} \to -\infty$ for all $z\in\tcone(\sol)$.
Then, $\lim \mirror(y_{n}) = \sol$.
\end{lemma}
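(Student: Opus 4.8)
The plan is to reduce the claim to showing that the Fenchel coupling $\fench(\sol,y_{n})$ tends to zero. Write $x_{n}=\mirror(y_{n})$. By \cref{prop:mirror} we have $h^{\ast}(y_{n})=\braket{y_{n}}{x_{n}}-h(x_{n})$, so, setting $D=\feas-\sol$ and $g(z)=h(\sol+z)-h(\sol)$ (a continuous function on the compact set $D$, with $g(0)=0$),
\begin{equation*}
\fench(\sol,y_{n})
 = h(\sol)+h^{\ast}(y_{n})-\braket{y_{n}}{\sol}
 = \max_{z\in D}\{\braket{y_{n}}{z}-g(z)\}.
\end{equation*}
Since $0\in D$ and $g(0)=0$, this is $\geq0$, and \cref{prop:Fenchel}(b) gives $\norm{x_{n}-\sol}^{2}\leq 2\fench(\sol,y_{n})/K$; hence it suffices to prove $\fench(\sol,y_{n})\to0$.

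The technical heart is the following uniform upgrade of the hypothesis: with $M_{n}=\sup\setdef{\braket{y_{n}}{w}}{w\in\tcone(\sol),\ \norm{w}=1}$, one has $M_{n}\to-\infty$. Granting this, fix $n$ large enough that $M_{n}<0$, put $\eta_{n}=2\sup_{z\in D}\abs{g(z)}/\abs{M_{n}}$, and note $\eta_{n}\to0$. For $z\in D$ with $\norm{z}\geq\eta_{n}$, using $D\subseteq\tcone(\sol)$ and hence $\braket{y_{n}}{z}\leq\norm{z}M_{n}$, we get $\braket{y_{n}}{z}-g(z)\leq\norm{z}M_{n}+\sup_{z\in D}\abs{g(z)}\leq-\sup_{z\in D}\abs{g(z)}\leq0$. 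For $z\in D$ with $\norm{z}<\eta_{n}$ we get $\braket{y_{n}}{z}-g(z)\leq\norm{z}M_{n}-g(z)\leq-g(z)\leq\sup\setdef{-g(z)}{z\in D,\ \norm{z}<\eta_{n}}$. Combining with the value $0$ attained at $z=0$,
\begin{equation*}
0\leq\fench(\sol,y_{n})\leq\max\bigl\{0,\ \sup\setdef{-g(z)}{z\in D,\ \norm{z}<\eta_{n}}\bigr\},
\end{equation*}
and the right-hand side tends to $0$ as $n\to\infty$ because $\eta_{n}\to0$ and $g$ is continuous at $0$ with $g(0)=0$. Thus $\fench(\sol,y_{n})\to0$, so $\mirror(y_{n})=x_{n}\to\sol$.

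It remains to establish $M_{n}\to-\infty$, and this is the step I expect to be the main obstacle, since the pointwise hypothesis $\braket{y_{n}}{z}\to-\infty$ does not obviously pass to a supremum over a cross-section. When $\tcone(\sol)$ is polyhedral --- which covers generic linear programs (\cref{cor:linear}) and, more generally, polyhedral $\feas$ --- it is immediate: if $r_{1},\dots,r_{m}$ are the normalized extreme rays of $\tcone(\sol)$, then every unit $w\in\tcone(\sol)$ can be written $w=\sum_{i}\lambda_{i}r_{i}$ with $\lambda_{i}\geq0$ and $\sum_{i}\lambda_{i}\geq\norm{w}=1$, so once all $\braket{y_{n}}{r_{i}}$ are negative we get $\braket{y_{n}}{w}\leq\sum_{i}\lambda_{i}\braket{y_{n}}{r_{i}}\leq\max_{i}\braket{y_{n}}{r_{i}}$, and $\max_{i}\braket{y_{n}}{r_{i}}\to-\infty$ because there are only finitely many generators. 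In the general case one argues from the compactness of $\tcone(\sol)\cap\{\norm{w}=1\}$ together with the pointedness of $\tcone(\sol)$ (valid here since $\sol$ is sharp, so $\pcone(\sol)$ has nonempty interior, cf.\ \cref{lem:sharp}); and in the application inside the proof of \cref{thm:sharp} the relevant sequence $y_{n}=Y(t_{n})$ lies in a translate $\ybase+\cone$ of a polyhedral cone $\cone\subseteq\intr(\pcone(\sol))\cup\{0\}$ on which $\braket{v}{z}\leq-c\norm{v}\norm{z}$ holds uniformly, so that $M_{n}\to-\infty$ becomes automatic once $\norm{Y(t_{n})}\to\infty$.
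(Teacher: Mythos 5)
Your reduction to showing $\fench(\sol,y_{n})\to0$, and the conditional argument deducing this from the uniform bound $M_{n}\to-\infty$, are correct and clean; this is also a genuinely different route from the paper, whose proof argues by contradiction directly from $y_{n}\in\subd h(\mirror(y_{n}))$ and the subgradient inequality $h(\sol)\geq h(x_{n})+\braket{y_{n}}{\sol-x_{n}}$. Your polyhedral-case verification of $M_{n}\to-\infty$ is also fine. The problem is exactly the step you flag as the main obstacle: in the non-polyhedral case, the pointwise hypothesis $\braket{y_{n}}{z}\to-\infty$ for each (nonzero) $z\in\tcone(\sol)$ does \emph{not} imply $M_{n}\to-\infty$, and compactness of the unit cross-section plus pointedness cannot rescue it (there is no monotonicity in $n$, so no Dini-type upgrade is available). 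Concretely, take $\feas=\setdef{x\in\R^{3}}{\sqrt{x_{1}^{2}+x_{2}^{2}}\leq x_{3}\leq 1}$ and $\sol=0$, so that $\tcone(\sol)$ is the pointed circular cone $x_{3}\geq\sqrt{x_{1}^{2}+x_{2}^{2}}$ and $\pcone(\sol)$ has nonempty interior, and set $y_{n}=(n\cos\theta_{n},\,n\sin\theta_{n},\,-n(1-\eps_{n}))$ with $\theta_{n}=n^{-1/4}$ and $\eps_{n}=\tfrac{1}{4}n^{-1/2}$. A short computation shows $\braket{y_{n}}{z}\to-\infty$ for every nonzero $z\in\tcone(\sol)$, yet for the unit boundary direction $w_{n}\propto(\cos\theta_{n},\sin\theta_{n},1)$ one has $\braket{y_{n}}{w_{n}}=n\eps_{n}/\sqrt{2}\to+\infty$, so $M_{n}\to+\infty$.

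Worse, in this example the conclusion of the lemma itself fails: with the Euclidean regularizer $h(x)=\tfrac{1}{2}\norm{x}_{2}^{2}$, the rim point $p_{n}=(\cos\theta_{n},\sin\theta_{n},1)$ is eventually strictly closer to $y_{n}$ than every feasible point in the ball of radius $\tfrac{1}{2}$ around $\sol$, so $\Eucl(y_{n})\not\to\sol$. Hence no argument can close your gap under the stated pointwise hypothesis; some additional structure is indispensable \textendash\ polyhedrality of $\tcone(\sol)$, or negativity that is uniform over the unit cross-section, which is precisely what is available in the application inside \cref{thm:sharp} (there $Y(t)$ eventually stays in $\ybase+\cone$ with $\cone$ polyhedral and contained in $\intr(\pcone(\sol))\cup\{0\}$, and the drift estimate $G_{z}(t)\leq-A\norm{z}-B\norm{z}\,t+\snoise_{z}(t)$ gives the required uniformity). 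So your closing remark about that application is the right fix, and your instinct that this is where the difficulty lies is sound. For what it is worth, the paper's own proof leans on an analogous unjustified uniformity claim \textendash\ the assertion that compactness yields a single unit $z\in\tcone(\sol)$ with $\braket{y_{n}}{z_{n}}\leq\braket{y_{n}}{z}$ for all $n$ \textendash\ which also fails in the example above; but as a proof of the lemma as stated, your general-case step is asserted rather than proved, and cannot be proved without strengthening the hypothesis.
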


\begin{proof}
By compactness of $\feas$ (and passing to a subsequence if necessary), we may assume that $x_{n} \equiv \mirror(y_{n})$ converges in $\feas$.
Assume therefore that $x_{n} \to x' \neq \sol$, so $\liminf \norm{x_{n} - \sol} > 0$.
Then, with $y_{n} \in \subd h(x_{n})$ by \cref{prop:mirror}, we get
\begin{flalign}
h(\sol)
	\geq h(x_{n}) + \braket{y_{n}}{\sol - x_{n}}
	\geq h(x_{n}) - \braket{y_{n}}{z_{n}} \norm{x_{n} - \sol},
\end{flalign}
where we set $z_{n} = (x_{n} - \sol) / \norm{x_{n} - \sol}$.
Since $z_{n}$ lives in the unit sphere of $\norm{\argdot}$, compactness (and a descent to a further subsequence if necessary) guarantees the existence of some $z\in\tcone(\sol)$ with $\norm{z} = 1$ and such that $\braket{y_{n}}{z_{n}} \leq \braket{y_{n}}{z}$ for all $n$ (recall that $\tcone(\sol)$ is closed).
We thus get $h(\sol) \geq h(x_{n}) - \braket{y_{n}}{z} \norm{x_{n} - \sol}$ and, taking $\liminf$ on both sides, we obtain $\liminf h(\sol) = \infty$, a contradiction.
\end{proof}

We are now in a position to prove our main result for sharp solutions:

\begin{proofof}{Proof of \cref{thm:sharp}}
As in the proof of \cref{lem:escape}, let $\nhd$ be a sufficiently small compact neighborhood of $\sol$ such that $\payv(\nhd)\subseteq\intr(\pcone(\sol))$, i.e. $\braket{\payv(x)}{z} \leq - \sharp_{\nhd} \norm{z}$ for some $\sharp_{\nhd}>0$ and for all $x\in \nhd$, $z\in\tcone(\sol)$.
Then, by compactness, there exists a convex cone $\cone\subseteq\intr(\pcone(\sol))$ such that $\braket{\payv(x)}{z} \leq -\sharp_{\nhd} \norm{z}$ for all $x\in \nhd$, $z\in\cone^{\perp}$.

With this in mind, pick $\eps,\delta>0$ sufficiently small so that the conclusion of \cref{lem:escape} holds and $\mirror(\temp y) \in \nhd$ whenever $\fench(\sol,\temp y) \leq \eps + \delta$.
If $\temp$ is also chosen small enough, combining \eqref{eq:Fench-reg} with \cref{lem:sharp-recurrence} shows that there exists a (random) sequence of times $t_{n}\uparrow\infty$ such that $\fench(\sol,\temp Y(t_{n})) \leq \eps$ for all $n$ \as.
Hence, by \cref{lem:escape} and the strong Markov property of $Y(t)$, there exists some $a>0$ such that $\probof{\text{$\fench(\sol,\temp Y(t_{n}+t)) \leq \eps + \delta$ for all $t\geq 0$}} \geq 1 - (1-a)^{n}$ for all $n$.
Thus, with notation as in \eqref{eq:Gz-bound}, we get
\begin{equation}
G_{z}(t_{n} + t)
	\leq -A\norm{z} - B\norm{z} t + \snoise_{z}(t)
	\quad
	\text{for all $t\geq0$},
\end{equation}
with probability at least $1 - (1-a)^{n}$.
In turn, \cref{lem:Wbound} yields $\snoise_{z}(t)/t\to0$ \as, showing that $\lim_{t\to\infty} G_{z}(t_{n} + t) = -\infty$.
Since the above holds for all $n$, we conclude that $\braket{Y(t)}{z} \to -\infty$ for all $z\in\tcone(\sol)$, so $X(t)\to\sol$ \as by \cref{lem:conv-polar}.

We are left to show that this convergence occurs in finite time if $\mirror$ is surjective.
To that end, note first that if $\sol = \mirror(\temp\dsol)$, we also have $\mirror(\temp(\dsol + \payv)) = \sol$ for all $\payv\in\pcone(\sol)$ by \cref{lem:inv-cone}.
Therefore, it suffices to show that, for some $\dsol$ such that $\mirror(\temp\dsol) = \sol$, we have $Y(t) \in\dsol + \pcone(\sol)$ for all sufficiently large $t$ \as.
However, since $X(t)\to\sol$, there exists some $t_{0}$ such that $X(t)\in\nhd$ for all $t\geq t_{0}$.
Thus, for all $z\in\tcone(\sol)$ with $\norm{z}=1$, we get
\begin{equation}
\braket{Y(t) - Y(t_{0})}{z}
	= \int_{t_{0}}^{t} \braket{\payv(X(s))}{z} \dd s
	+ \braket{\noise(t)}{z}
	\leq - \sharp_{\nhd} (t - t_{0}) + \dnorm{\noise(t)}.
\end{equation}
Since $\noise(t)/t\to 0$ by \cref{lem:Wbound}, we conclude that $\braket{Y(t)}{z} \to -\infty$ uniformly in $z$ \as.
Consequently, there exists some $t_{0}'$ such that $\braket{Y(t) - \dsol}{z} \leq 0$ for all $t\geq t_{0}'$ and all $z\in\tcone(\sol)$ with $\norm{z} = 1$.
In turn, this implies that $Y(t) \in \dsol + \pcone(\sol)$ for all $t\geq t_{0}'$ and our proof is complete.
\end{proofof}

\subsection{Convergence via rectification}
\label{app:transforms}

We now turn to the rectified variants of \eqref{eq:SMD} with a decreasing sensitivity parameter:

\begin{proofof}{Proof of \cref{thm:conv-var}}
For all $x\in\feas$ and $\sol\in\feas$, we have
\begin{equation}
\obj(x) - \min\obj
	= \obj(x) - \obj(\sol)
	\leq \braket{\nabla\obj(x)}{x - \sol}
	= \braket{\payv(x)}{\sol - x},
\end{equation}
by convexity of $\obj$.
Hence, by the definition of $\wilde X$ (and Jensen's inequality in the case of \eqref{eq:X-avg}), we obtain
\begin{equation}
\label{eq:obj-regret}
\obj(\wilde X(t))
	\leq \min\obj + \frac{1}{t}\int_{0}^{t} \braket{\payv(X(s))}{\sol - X(s)} \dd s,
\end{equation}
so it suffices to properly majorize the \acl{RHS} of the above equation.

To that end, let $\lyap(t) = \temp(t)^{-1} \fench(\sol,\temp(t) Y(t))$ denote the $\temp$-deflated Fenchel coupling between $Y(t)$ and $\sol\in\argmin\obj$.
Then, \cref{lem:dFench-stoch} yields
\begin{subequations}
\label{eq:rate-var1}
\begin{flalign}
\int_{0}^{t} \braket{\payv(X(s))}{\sol - X(s)} \dd s
	&\label{eq:rate-Fench}
	\leq \lyap(0) - \lyap(t)
	\\
	&\label{eq:rate-temp}
	-\int_{0}^{t} \frac{\dot\temp(s)}{\temp^{2}(s)} \left[ h(\sol) - h(X(s)) \right] ds
	\\
	&\label{eq:rate-Ito}
	+ \frac{1}{2K} \int_{0}^{t} \temp(s) \trof{\covmat(X(s),s)} \dd s
	\\
	&\label{eq:rate-noise}
	+ \sum_{i=1}^{n} \int_{0}^{t} (X_{i}(s) - \sol_{i}) \dd Z_{i}(s).
\end{flalign}
\end{subequations}

We now proceed to bound each term of \eqref{eq:rate-var1}:
\begin{enumerate}
[\itshape a\upshape)]
\addtolength{\itemsep}{.5ex}

\item
Since $\lyap(t)\geq0$ for all $t$, the term \eqref{eq:rate-Fench} is bounded from above by $\lyap(0)$, viz.
\begin{equation}
\label{eq:rate-Fench1}
\eqref{eq:rate-Fench}
	\leq \lyap(0)
	= \frac{h(\sol) + h^{\ast}(\temp(0) Y(0))}{\temp(0)} - \braket{Y(0)}{\sol}
\end{equation}

\item
For \eqref{eq:rate-temp},
we have $h(\sol) - h(X(s)) \leq \depth$ by definition, so, with $\dot\temp(t) \leq 0$ for almost all $t$ by \eqref{eq:temp}, we get
\begin{equation}
\label{eq:rate-temp1}
\eqref{eq:rate-temp}
	\leq - \depth \int_{0}^{t} \frac{\dot\temp(s)}{\temp^{2}(s)} \dd s
	= \frac{\depth}{\temp(t)} - \frac{\depth}{\temp(0)}.
\end{equation}

\item
For \eqref{eq:rate-Ito}, the definition of $\noisevar$ gives
\(
\eqref{eq:rate-Ito}
	\leq (2K)^{-1} \noisevar \int_{0}^{t} \temp(s) \dd s.
\)

\item
Finally, for \eqref{eq:rate-noise}, let $\snoise(t) = \int_{0}^{t} \sum_{i=1}^{n} (X_{i}(s) - \sol_{i}) \dd Z_{i}(s)$ and write $\rho(t) = [\snoise(t),\snoise(t)]$ for the quadratic variation of $\snoise$.
We then get
\begin{flalign}
d[\snoise,\snoise]
	= d\snoise \cdot d\snoise
	&= \sum_{i,j=1}^{n} \covmat_{ij} (X_{i} - \sol_{i}) (X_{j} - \sol_{j}) \dd t
	\leq \noisevar \norm{X - \sol}^{2}_{2} \dd t,
\end{flalign}
so $\rho(t) \leq \fradius\noisevar \norm{\feas}^{2} t$ for some norm-dependent constant $\fradius>0$.
Arguing as in the proof of \cref{lem:Wbound} in \cref{app:stoch}, the Dam\-bis\textendash Du\-bins\textendash Schwarz time-change theorem for martingales \cite[Theorem~3.4.6 and Problem~3.4.7]{KS98} shows that there exists a one-dimensional Wiener process $\wilde W(t)$ with induced filtration $\wilde\filter_{s} = \filter_{\tau_{\rho}(s)}$ and such that $\wilde W(\rho(t)) = \snoise(t)$ for all $t\geq0$.
By the law of the iterated logarithm \cite[p.~112]{KS98}, we then obtain
\begin{equation}
\label{eq:loglog}
\limsup_{t\to\infty} \frac{\wilde W(\rho(t))}{\sqrt{2 Mt \log \log (Mt)}}
	 \leq \limsup_{t\to\infty} \frac{\wilde W(\rho(t))}{\sqrt{2 \rho(t) \log \log \rho(t)}}
	= 1
	\quad
	\text{(a.s.),}
\end{equation}
where $M = \fradius\noisevar \norm{\feas}^{2}$.
Thus, with probability $1$, we have $\snoise(t) = \bigoh(\sqrt{t \log\log t})$.

\end{enumerate}

Putting together all of the above and dividing by $t$, we get
\begin{equation}
\label{eq:rate-var2}
\frac{1}{t} \int_{0}^{t} \braket{\payv(X(s))}{\sol - X(s)} \dd s
	\leq \frac{\depth}{t\temp(t)}
	+ \frac{\noisevar}{2Kt} \int_{0}^{t} \temp(s) \dd s
	+ \bigoh(t^{-1/2} \sqrt{\log\log t}),
\end{equation}
where we have absorbed the $\bigoh(1/t)$ terms from \eqref{eq:rate-Fench1} and \eqref{eq:rate-temp1} in the logarithmic term $\bigoh(\sqrt{t^{-1}\log\log t})$.
Our claim then follows from \eqref{eq:obj-regret}.
Finally,
recalling that $\snoise(t)$ is a zero-mean local martingale,
the mean bound \eqref{eq:rate-mean} follows by taking expectations above.
\end{proofof}

\section{Results from stochastic analysis}
\label{app:stoch}

In this last appendix, we collect some results from stochastic analysis that we use throughout the paper.
The first such result is a growth estimate for Itô martingales with bounded volatility:

\begin{lemma}
\label{lem:Wbound}
Let $W(t)$ be a Wiener process in $\R^{m}$ and let $\zeta(t)$ be a bounded, continuous process in $\R^{m}$.
Then, for every function $f\from[0,\infty)\to(0,\infty)$, we have
\begin{equation}
\label{eq:Wbound}
f(t) + \int_{0}^{t} \zeta(s) \cdot dW(s)
	\sim f(t)
	\quad
	\text{as $t\to\infty$ \textup(a.s.\textup),}
\end{equation}
whenever
$\lim_{t\to\infty} \left(t\log\log t\right)^{-1/2} f(t) = +\infty$.
\end{lemma}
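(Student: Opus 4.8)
The plan is to reduce the claim to showing that the continuous local martingale
$M(t) \equiv \int_{0}^{t} \zeta(s)\cdot dW(s)$ satisfies $M(t)/f(t)\to0$ almost surely:
writing $f(t) + M(t) = f(t)\parens{1 + M(t)/f(t)}$, the asymptotic equivalence $f(t)+M(t)\sim f(t)$ is exactly the statement that $M(t)/f(t)\to0$. I would first record that the growth assumption $\lim_{t\to\infty}(t\log\log t)^{-1/2} f(t) = +\infty$ forces $f(t)\to+\infty$ (since $t\log\log t\to\infty$); this disposes of the degenerate cases below.

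Since $\zeta$ is bounded, say $\sup_{t\geq0}\norm{\zeta(t)}\leq C$, the quadratic variation of the scalar integral $M$ is $\langle M\rangle_{t} = \int_{0}^{t}\norm{\zeta(s)}^{2}\,\dd s \leq C^{2} t$ for all $t\geq0$, and $t\mapsto\langle M\rangle_{t}$ is nondecreasing. I would then split into two cases according to $\langle M\rangle_{\infty} \equiv \lim_{t\to\infty}\langle M\rangle_{t}$. If $\langle M\rangle_{\infty}<\infty$, then $M$ is an $L^{2}$-bounded martingale (by the Itô isometry), so $M(t)$ converges almost surely to a finite limit; in particular $M$ is a.s. bounded, and $M(t)/f(t)\to0$ follows from $f(t)\to\infty$. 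If instead $\langle M\rangle_{\infty}=\infty$, I would invoke the Dambis\textendash Dubins\textendash Schwarz time-change theorem \cite[Theorem~3.4.6]{KS98}: on this event there is a standard one-dimensional Wiener process $B$ (with respect to the time-changed filtration) such that $M(t) = B(\langle M\rangle_{t})$ for all $t\geq0$. By the law of the iterated logarithm \cite[p.~112]{KS98}, $\limsup_{s\to\infty}\abs{B(s)}/\sqrt{2s\log\log s} = 1$ a.s., so there is a random $s_{0}$ with $\abs{B(s)}\leq 2\sqrt{s\log\log s}$ for all $s\geq s_{0}$. Since $\langle M\rangle_{t}\uparrow\infty$ and $s\mapsto s\log\log s$ is eventually increasing, for large $t$ we obtain
\begin{equation*}
\abs{M(t)} = \abs{B(\langle M\rangle_{t})} \leq 2\sqrt{\langle M\rangle_{t}\log\log\langle M\rangle_{t}} \leq 2\sqrt{C^{2} t\,\log\log(C^{2} t)},
\end{equation*}
so $\abs{M(t)} = \bigoh(\sqrt{t\log\log t})$ a.s.; dividing by $f(t)$ and using the hypothesis gives $M(t)/f(t)\to0$ a.s., which completes the argument.

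The main obstacle I anticipate is bookkeeping rather than conceptual: one must be careful that $\log\log$ and the monotonicity of $s\mapsto s\log\log s$ are only available for sufficiently large arguments, so the estimates above are genuinely asymptotic (valid only once $t$, hence $\langle M\rangle_{t}$, is large), and the time-change representation $M(t) = B(\langle M\rangle_{t})$ should be applied on the event $\braces{\langle M\rangle_{\infty}=\infty}$, where DDS produces a Wiener process defined on all of $[0,\infty)$. A minor point worth stating explicitly is that $\zeta\cdot dW = \sum_{k=1}^{m}\zeta_{k}\,dW_{k}$ is a \emph{scalar} Itô integral, so $M$ is real-valued and the one-dimensional law of the iterated logarithm applies directly.
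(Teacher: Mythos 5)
Your proof is correct and follows essentially the same route as the paper: bound the quadratic variation by $C^{2}t$, apply the Dambis\textendash Dubins\textendash Schwarz time change and the law of the iterated logarithm to get $\abs{M(t)} = \bigoh(\sqrt{t\log\log t})$ a.s., and divide by $f(t)$. The only cosmetic difference is that you treat the event $\{\langle M\rangle_{\infty}<\infty\}$ separately via martingale convergence (where, strictly, you should invoke a.s.\ convergence of continuous local martingales on that event rather than the Itô isometry, since pathwise finiteness of $\langle M\rangle_{\infty}$ does not give $L^{2}$-boundedness), whereas the paper absorbs this case into the extended time-change theorem \cite[Problem~3.4.7]{KS98}.
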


\begin{proofof}{Proof of \cref{lem:Wbound}}
Let $\snoise(t) = \sum_{i=1}^{n} \int_{0}^{t} \zeta_{i}(s) \dd W_{i}(s)$.
Letting $\rho(t) = [\snoise(t),\snoise(t)]$ denote the quadratic variation of $\snoise(t)$, we have
\begin{equation}
\label{eq:covest1}
d\rho
	= \insum_{i=1}^{n} \zeta_{i} \zeta_{j} \delta_{ij} \dd t
	\leq M \dd t,
\end{equation}
where $M = \sup_{t\geq0} \norm{\zeta(t)}^{2} < \infty$ (recall that $\zeta(t)$ is bounded by assumption).
Now, let $\rho_{\infty} = \lim_{t\to\infty} \rho(t) \in[0,\infty]$ and set
\begin{equation}
\tau_{\rho}(s) = \begin{cases}
	\inf\setdef{t\geq0}{\rho(t) > s}
		&\quad
		\text{if $s\leq\rho_{\infty}$},
		\\
	\infty
		&\quad
		\text{otherwise}.
	\end{cases}
\end{equation}
The process $\tau_{\rho}(s)$ is finite, non-negative, non-decreasing, and right-continuous on $[0,\rho_{\infty})$;
moreover, it is easy to check that $\rho(\tau_{\rho}(s)) = s \wedge \rho_{\infty}$ and $\tau_{\rho}(\rho(t)) = t$ \cite[Problem~3.4.5]{KS98}.
Therefore, by the Dambis\textendash Dubins\textendash Schwarz time-change theorem for martingales \cite[Thm.~3.4.6 and Pb.~3.4.7]{KS98}, there exists a standard, one-dimensional Wiener process $\wilde W(t)$ with induced filtration $\wilde\filter_{s} = \filter_{\tau_{\rho}(s)}$ and such that $\wilde W(\rho(t)) = \snoise(t)$ for all $t\geq0$.
The rest of the proof then follows by applying the law of the iterated logarithm as in \cite[Lemma~B.4]{BM17}. 
%
\end{proofof}

The second result we report here is a weak version of Itô's formula for differentiable functions with Lipschitz-continuous gradient.
For notational convenience, let $\mathbf{C}_{\Lip}^{1,1}(\dual)$ denote the space of functions $\test\from\dual\to\R$ such that
\begin{equation}
\norm{\nabla \test(y_{2}) - \nabla\test(y_{1})}
	\leq \Lip\norm{y_{2} - y_{1}}_{\ast}
	\quad
	\text{for all $y_{1},y_{2}\in\dual$}.
\end{equation}
We then have:

\begin{proposition}
\label{prop:Ito}
Let $Y(t) = (Y_{i}(t))_{i=1}^{n}$ be a $\dual$-valued Itô process of the form 
\begin{equation}
Y_{i}(t)
	= Y_{i}(0) + \int_{0}^{t} \alpha_{i}(s) \dd s
	+ \sum_{k=1}^{m} \int_{0}^{t} \beta_{ik}(s) \dd W_{k}(s),
\end{equation}
where $W(t) = (W_{k}(t))_{k=1}^{m}$ is a standard $m$-dimensional Wiener process.
If $\test\in\mathbf{C}^{1,1}_{\Lip}(\dual)$ is convex, then, for all $t\geq0$, we have:
\begin{equation}
\test(Y(t))
	\leq\test(Y_{0})
	+\int_{0}^{t} \braket{\nabla\test(Y(s))}{dY(s)}
	+\frac{\Lip}{2} \int_{0}^{t} \trof{\beta(s)\beta(s)^{\top}} \dd s.
\end{equation}
\end{proposition}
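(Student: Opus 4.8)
The plan is to obtain \cref{prop:Ito} from the classical ($C^{2}$) version of Itô's formula by mollifying $\test$, bounding the Hessian term that appears there by the Lipschitz constant $\Lip$ through an elementary trace inequality, and then letting the mollification parameter tend to zero.

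First I would fix a smooth mollifier $\chi\from\dual\to\R_{+}$ supported in the unit ball with $\int\chi=1$, set $\chi_{\eps}(z)=\eps^{-n}\chi(z/\eps)$ for $\eps>0$, and let $\test_{\eps}=\test*\chi_{\eps}$. Then $\test_{\eps}$ is $C^{\infty}$ and convex (being the convolution of a convex function with a nonnegative kernel), its gradient $\nabla\test_{\eps}=(\nabla\test)*\chi_{\eps}$ is again $\Lip$-Lipschitz, and since $\nabla\test$ is $\Lip$-Lipschitz one has the uniform estimate $\dnorm{\nabla\test_{\eps}(y)-\nabla\test(y)}\leq\Lip\eps$ for all $y\in\dual$, while $\test_{\eps}\to\test$ locally uniformly. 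The key algebraic observation is that, because $\test_{\eps}$ is convex with $\Lip$-Lipschitz gradient, its Hessian obeys $0\mleq\nabla^{2}\test_{\eps}(y)\mleq\Lip I$ for every $y$; consequently $\trof{\nabla^{2}\test_{\eps}(y)\,C}\leq\Lip\trof{C}$ for every positive-semidefinite matrix $C$, since $\Lip I-\nabla^{2}\test_{\eps}(y)\mgeq0$ and the trace of a product of two positive-semidefinite matrices is nonnegative.

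Next I would apply the standard multidimensional Itô formula to $\test_{\eps}\in C^{2}$ along the semimartingale $Y$, using $d[Y_{i},Y_{j}]=(\beta\beta^{\top})_{ij}\,dt$, to get
\begin{equation*}
\test_{\eps}(Y(t))
	=\test_{\eps}(Y_{0})
	+\int_{0}^{t}\braket{\nabla\test_{\eps}(Y(s))}{dY(s)}
	+\tfrac{1}{2}\int_{0}^{t}\trof{\nabla^{2}\test_{\eps}(Y(s))\,\beta(s)\beta(s)^{\top}}\,ds,
\end{equation*}
and then bound the last integral, via the trace inequality above with $C=\beta(s)\beta(s)^{\top}\mgeq0$, by $\tfrac{\Lip}{2}\int_{0}^{t}\trof{\beta(s)\beta(s)^{\top}}\,ds$; this already yields the asserted inequality with $\test_{\eps}$ in place of $\test$. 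It then remains to pass to the limit $\eps\downarrow0$: the terms $\test_{\eps}(Y(t))$ and $\test_{\eps}(Y_{0})$ converge almost surely by the convergences recorded above, the drift part $\int_{0}^{t}\braket{\nabla\test_{\eps}(Y(s))}{\alpha(s)}\,ds$ converges pathwise using $\dnorm{\nabla\test_{\eps}-\nabla\test}\leq\Lip\eps$ and $\int_{0}^{t}\norm{\alpha(s)}\,ds<\infty$, and the martingale part differs from $\int_{0}^{t}\braket{\nabla\test(Y(s))}{\beta(s)\,dW(s)}$ by a continuous local martingale whose quadratic variation is at most $\Lip^{2}\eps^{2}\int_{0}^{t}\fnorm{\beta(s)}^{2}\,ds$, hence tends to $0$ in probability; selecting a subsequence $\eps_{k}\downarrow0$ along which this convergence is almost sure and taking limits in the $\test_{\eps}$-inequality preserves it. If one prefers to invoke only the integrability implicit in the definition of an Itô process, one may first replace $t$ by $t\wedge\tau_{R}$ with $\tau_{R}=\inf\{t\geq0:\dnorm{Y(t)}\geq R\}$, run the argument for the stopped process, and send $R\to\infty$.

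The main obstacle — such as it is — is this last limit passage, in particular the convergence of the stochastic integrals; but it is routine once the uniform bound $\dnorm{\nabla\test_{\eps}-\nabla\test}\leq\Lip\eps$ is in hand, and everything else reduces to the elementary facts that convolving with a nonnegative kernel of total mass one neither increases a Lipschitz constant nor destroys convexity, together with the positive-semidefinite trace inequality above.
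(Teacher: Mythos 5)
Your proof is correct and follows the same overall skeleton as the paper's argument (mollify $\test$, apply the classical Itô formula to the smooth approximation, bound the second-order term by $\tfrac{\Lip}{2}\trof{\beta\beta^{\top}}$, and pass to the limit $\eps\downarrow0$), but the way you obtain the Hessian bound is genuinely different and a bit more elementary. The paper first proves a separate lemma (via Alexandrov's theorem and the descent lemma) that $\test$ itself is a.e.\ twice differentiable with $0\mleq\hess(\test)\mleq\Lip I$ Lebesgue-a.e., and then feeds this pointwise a.e.\ bound into the Itô correction term through the convolution representation of $\hess(\test_{\eps})$; you instead bound $\nabla^{2}\test_{\eps}$ directly, using only that convolution with a probability kernel preserves convexity and the Lipschitz constant of the gradient, so for the \emph{smooth} function $\test_{\eps}$ the two-sided bound $0\mleq\nabla^{2}\test_{\eps}\mleq\Lip I$ is classical and Alexandrov's theorem is never needed. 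Your route also treats the final limit passage more carefully than the paper, which simply invokes dominated convergence: you control the stochastic-integral discrepancy through its quadratic variation (of order $\Lip^{2}\eps^{2}\int_{0}^{t}\fnorm{\beta}^{2}ds$), pass to an a.s.\ convergent subsequence, and localize by stopping times if needed, all of which is legitimate and arguably makes the limit step more transparent. What the paper's approach buys is the intrinsic a.e.\ second-order information about $\test$ itself (its \cref{lem:Hessianbound}), which is of independent interest; what yours buys is a shorter, self-contained argument resting only on elementary properties of mollification and the trace inequality $\trof{AB}\geq0$ for positive-semidefinite $A,B$.
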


The proof of \cref{prop:Ito} is based on the following property of convex functions in $\mathbf{C}^{1,1}_{\Lip}(\R^{n})$:

\begin{lemma}
\label{lem:Hessianbound}
Let $\test\in\mathbf{C}^{1,1}_{\Lip}(\dual)$ be convex.
Then $\test$ is almost everywhere twice differentiable and its Hessian satisfies
\begin{equation}
0
	\mleq \hess(\test(y))
	\mleq \Lip I
	\quad
	\text{for \textpar{Lebesgue} almost all $y\in\dual$.}
\end{equation}
\end{lemma}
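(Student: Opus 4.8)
The plan is to obtain the almost-everywhere existence of the Hessian essentially for free from the $\mathbf{C}^{1,1}_{\Lip}$ hypothesis (convexity giving a convenient alternative), and then to read off the two-sided bound from a one-dimensional argument along each line through the point.

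First, since $\test\in\mathbf{C}^{1,1}_{\Lip}(\dual)$, the gradient map $\nabla\test\from\dual\to\vecspace$ is globally Lipschitz; fixing an auxiliary Euclidean structure on the finite-dimensional space $\dual$ (the statement is coordinate-independent) and applying Rademacher's theorem, $\nabla\test$ is Fréchet differentiable at Lebesgue-almost every $y\in\dual$, and at each such $y$ we set $\hess(\test(y))\equiv D(\nabla\test)(y)$. Symmetry of $\hess(\test(y))$ for a.e.\ $y$ follows because the distributional Hessian of $\test$ is symmetric and lies in $L^{\infty}$ (by the Lipschitz bound), hence agrees a.e.\ with the pointwise derivative of $\nabla\test$ by Lebesgue's differentiation theorem; alternatively, one may invoke Alexandrov's theorem, which, as $\test$ is convex, directly furnishes a symmetric second-order Taylor expansion a.e. In either case, at a point $y$ of twice differentiability one has, for every $w\in\dual$, the expansion
\begin{equation}
\test(y+tw)
	= \test(y) + t\braket{w}{\nabla\test(y)} + \tfrac{1}{2}t^{2}\braket{w}{\hess(\test(y))\cdot w} + o(t^{2})
	\quad\text{as }t\to0,
\end{equation}
obtained by integrating the first-order expansion of $\nabla\test$ along the segment $[y,y+tw]$ (uniformity of the remainder over the segment follows from the Lipschitz control on $\nabla\test$).

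Next, for the lower bound, fix such a $y$ and a direction $w\in\dual$. Convexity of $\test$ gives $\test(y+tw)+\test(y-tw)-2\test(y)\geq0$ for all $t\in\R$; substituting the expansion above for $\pm t$, the left-hand side equals $t^{2}\braket{w}{\hess(\test(y))\cdot w}+o(t^{2})$, so dividing by $t^{2}$ and letting $t\to0$ yields $\braket{w}{\hess(\test(y))\cdot w}\geq0$. As $w$ is arbitrary, $\hess(\test(y))\mgeq0$. For the upper bound, at a point $y$ of differentiability of $\nabla\test$ we have $\hess(\test(y))\cdot w=\lim_{t\to0}t^{-1}\bigl(\nabla\test(y+tw)-\nabla\test(y)\bigr)$, so the Lipschitz estimate $\norm{\nabla\test(y+tw)-\nabla\test(y)}\leq\Lip\abs{t}\dnorm{w}$ passes to the limit to give $\norm{\hess(\test(y))\cdot w}\leq\Lip\dnorm{w}$. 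Pairing the output (an element of $\vecspace$) back against $w\in\dspace=\dual$,
\begin{equation}
\braket{w}{\hess(\test(y))\cdot w}
	\leq \dnorm{w}\,\norm{\hess(\test(y))\cdot w}
	\leq \Lip\,\dnorm{w}^{2},
\end{equation}
i.e.\ $\hess(\test(y))\mleq\Lip I$, with $I$ the form $w\mapsto\dnorm{w}^{2}$. Intersecting the full-measure set where $\nabla\test$ is differentiable with itself, both bounds hold for a.e.\ $y$, proving the claim.

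The only genuinely delicate aspects are bookkeeping ones: quoting the correct version of Rademacher's (or Alexandrov's) theorem — both standard, e.g.\ in Evans--Gariepy — and keeping the primal/dual pairing straight in the upper bound, since $\hess(\test(y))$ maps $\dual$ into $\vecspace$ and must be paired back against an element of $\dspace=\dual$. A fully self-contained variant that avoids Alexandrov altogether is mollification: $\test_{\eps}=\test\ast\rho_{\eps}$ is smooth and convex with $\nabla\test_{\eps}$ still $\Lip$-Lipschitz, hence $0\mleq\hess(\test_{\eps})\mleq\Lip I$ everywhere; as $\eps\downarrow0$, $\hess(\test_{\eps})=D^{2}\test\ast\rho_{\eps}\to D^{2}\test$ at every Lebesgue point of the $L^{\infty}$ field $D^{2}\test$, which transfers the bound to a.e.\ $y$, while Rademacher still supplies a.e.\ twice-differentiability.
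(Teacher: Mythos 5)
Your proof is correct, but it reaches the two bounds by a somewhat different route than the paper. The paper invokes Alexandrov's theorem (a.e.\ second-order Taylor expansion of the convex function $\test$) and then obtains the upper bound by playing that expansion against the descent lemma $\test(y+z)\leq\test(y)+\braket{\nabla\test(y)}{z}+\tfrac{\Lip}{2}\dnorm{z}^{2}$ along a ray $z=t\dvec$ and letting $t\to0^{+}$; the lower bound is dismissed as immediate from convexity. You instead apply Rademacher's theorem to the $\Lip$-Lipschitz map $\nabla\test$, recover the second-order expansion by integrating the first-order expansion of the gradient along the segment, and then get the upper bound directly from the difference quotient $\norm{t^{-1}(\nabla\test(y+t\dvec)-\nabla\test(y))}\leq\Lip\dnorm{\dvec}$, with the lower bound from second differences. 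Both arguments are sound; yours buys a little extra: convexity is needed only for $\hess(\test(y))\mgeq 0$ (the upper bound holds for any $\mathbf{C}^{1,1}_{\Lip}$ function), you obtain genuine Fréchet twice differentiability at a.e.\ point rather than only the Alexandrov expansion, and the constant $\Lip$ comes out cleanly \textendash\ the paper's own division step (``dividing by $t$'') in fact should divide by $t^{2}$ and yields $\dvec^{\top}\hess(\test(y))\dvec\leq\Lip$, not $\Lip/2$, a harmless slip your route avoids. Your symmetry discussion is the only slightly informal point, but symmetry is not needed for the stated bounds and your fallback to Alexandrov covers it. Finally, your mollification variant is essentially the device the paper saves for the proof of \cref{prop:Ito}, so it is fully consistent with the paper's toolkit.
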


\begin{proof}
The fact that $\test$ is twice differentiable (Lebesgue) a.e. is Alexandrov's theorem.
Hence, there exists a Lebesgue-full set $\dual_{0}\subseteq\dual$ such that
\begin{equation}
\label{eq:psi1}
\test(y + z)
	= \test(y)
	+ \braket{\nabla\test(y)}{z}
	+ \frac{1}{2} z^{\top} \hess(\test(y)) z
	+ \theta(y,z)
	\quad
	\text{for all $y\in\dual_{0}$},
\end{equation}
with $\theta(y,z) = o(\dnorm{z^{2}})$.
Furthermore, by the well-known descent lemma for functions with Lipschitz continuous gradient \cite[Theorem~2.1.5]{Nes04}, we also have
\begin{equation}
\label{eq:descent}
\test(y+z)
	\leq \test(y)
	+ \braket{\nabla\test(y)}{z}
	+ \frac{\Lip}{2} \norm{z}_{\ast}^{2}
	\quad
	\text{for all $y,z \in\dual$}. 
\end{equation}
Thus, taking $z=t\dvec$ for some unit vector $\dvec\in\dual$ (i.e. $\dnorm{\dvec}=1$) and combining the above, we readily obtain
\begin{equation}
\frac{t^{2}}{2} \dvec^{\top} \hess(\test(y))\dvec
	+ \theta(y,t\dvec)
	\leq \frac{\Lip}{2} t^{2}
	\quad
	\text{for all $y\in\dual_{0}$, $t\geq0$}.
\end{equation}
Hence, dividing by $t$ and letting $t\to0^{+}$ yields
\begin{equation}
\dvec^{\top} \hess(\test(y)) \dvec
	\leq \frac{\Lip}{2}
	\quad
	\text{for all $y\in\dual_{0}$},
\end{equation}
implying in turn that $\hess(\test(y)) \mleq \Lip I$ for all $y\in\dual_{0}$.
The bound $\hess(\test(y)) \mgeq 0$ is a trivial consequence of convexity, completing our proof.
\end{proof}

\begin{proofof}{Proof of \cref{prop:Ito}}
Our proof relies on smoothing by mollification.
To begin, consider the standard unit mollifier 
\begin{equation}
\rho(u)
	= \begin{cases}
	c\exp\left(-\frac{1}{1-\norm{u}_{\ast}^{2}}\right)
		&\text{ if }\norm{u}_{\ast}<1,
		\\
	0
		& \text{if }\norm{u}_{\ast}\geq 1,
	\end{cases}
\end{equation}
with $c>0$ chosen so that $\int_{\R^{n}} \rho(w) \dd w=1$.
Then, for all $\eps>0$, let 
\begin{subequations}
\begin{flalign}
\rho_{\eps}(u)
	&=\eps^{-n}\rho(u/\eps),
\intertext{and}
\test_{\eps}(y)
	&= (\test\ast\rho_{\eps})(y)
	=\int_{\dual} \test(y-w) \rho_{\eps}(w) \dd w,
\end{flalign}
\end{subequations}
with ``$\ast$'' above denoting convolution over $\R^{n}$.
We then have $\test_{\eps}\in\mathbf{C}^{\infty}(\dual)$, so the standard form of Itô's formula gives us
\begin{flalign}
\test_{\eps}(Y(t))
	&= \test_{\eps}(Y(s))
	+ \int_{s}^{t}\braket{\nabla\test_{\eps}(Y(\tau))}{dY(\tau)}
	\notag\\
	&+ \frac{1}{2} \int_{s}^{t} \trof*{\hess(\test_{\eps}(Y(\tau))) \beta(\tau)\beta(\tau)^{\top}} \dd\tau
	\notag\\
	&= \test_{\eps}(Y(s))
	+ \int_{s}^{t} \braket*{\int_{\dual} \nabla\test(z)\rho_{\eps}(Y(\tau) - z) \dd z}{dY(\tau)}
	\notag\\
	&+\frac{1}{2} \int_{s}^{t}\int_{\dual} \trof*{\hess(\test(z)) \beta(\tau)\beta(\tau)^{\top}} \rho_{\eps}(Y(\tau) - z) \dd\tau \dd z,
\end{flalign}
where the last equality uses the fact that $\hess(\test)$ exists for (Lebesgue) almost all $y$, as established in \cref{lem:Hessianbound}.
Using \cref{lem:Hessianbound} one more time, we further have $\trof{\hess(\test(z))\beta(\tau)\beta(\tau)^{\top}} \leq \Lip\trof{\beta(\tau)\beta(\tau)^{\top}}$, implying in turn that
\begin{flalign}
\test_{\eps}(Y(t)) - \test_{\eps}(Y(s))
	&\leq \int_{s}^{t} \braket*{\int_{\dual} \nabla\test(z) \rho_{\eps}(Y(\tau) - z) \dd z}{dY(\tau)}
	\notag\\
	&+ \frac{\Lip}{2} \int_{s}^{t} \trof{\beta(\tau)\beta(\tau)^{\top}} \dd\tau. 
\end{flalign}
Our assertion then follows by letting $\eps\to0^{+}$ and invoking the dominated convergence theorem.
\end{proofof}

\begin{remark}
\label{rem:Ito}
In the main body of the paper, the above result is typically applied to the Fenchel coupling $\fench(\base,y)$ which, as a function of $y$, is in the class $\mathbf{C}^{1,1}_{1/K}(\dual)$ for every $\base\in\feas$, by \cref{prop:mirror}.
Specifically, letting $Y(t)$ denote the unique strong solution to \eqref{eq:SMD} and taking $\lyap(t) = \fench(\base,Y(t))$ for some $\base\in\feas$, \cref{prop:Ito} yields 
\begin{flalign}
\lyap(t) - \lyap(0)
	&\leq \int_{0}^{t}\braket{\nabla \fench(\base,Y(s))}{dY(s)}
	+\frac{1}{2K} \int_{0}^{t}\trof{\covmat(X(s),s)} \dd s,
\end{flalign}
where we used the definition $\covmat = \noisevol\noisevol^{\top}$ of $\covmat$.
\end{remark}

\bibliographystyle{siam}
\bibliography{IEEEabrv,Bibliography-SMD}

\end{document}